
\documentclass[journal]{IEEEtran}
%

\usepackage{graphicx}          

\usepackage[dvips]{epsfig}    
\usepackage{latexsym,amssymb}
\usepackage{amsmath, amsbsy, amsthm}
\usepackage{amsopn, amstext}
\usepackage{colortbl}
\usepackage{cite}
\newtheorem{remark}{Remark}
\newtheorem{theorem}{Theorem}
\newtheorem{definition}{Definition}
\newtheorem{lemma}{Lemma}

\newtheorem{problem}{Problem}
\newtheorem{assumption}{Assumption}


%

%

%
\ifCLASSINFOpdf
\else
\fi
\hyphenation{op-tical net-works semi-conduc-tor}

\allowdisplaybreaks
    \begin{document}
%
\title{Stabilization Control for Linear Continuous-time Mean-field Systems}
%
%
%

\author{Qingyuan~Qi,~\IEEEmembership{}
        Huanshui~Zhang,~\IEEEmembership{Senior Member,~IEEE}
\thanks{This work is supported by the National Natural Science Foundation of
China under Grants 61120106011, 61573221, 61633014.

Q. Qi and H. Zhang are with School of Control Science and Engineering, Shandong University, Jinan, Shandong,
P.R.China 250061. H. Zhang is the corresponding author. (e-mail: qiqy123@163.com;
hszhang@sdu.edu.cn)}}

\maketitle

\begin{abstract}
This paper investigates the stabilization and control problems for linear continuous-time mean-field systems (MFS). Under standard assumptions, necessary and sufficient conditions to stabilize the mean-field systems in the mean square sense are explored {\em for the first time}. It is shown that, under the assumption of exact detectability (exact observability), the mean-field system is stabilizable if and only if a coupled algebraic Riccati equation (ARE) admits a unique positive semi-definite solution (positive definite solution), which coincides with the classical stabilization results for standard deterministic systems and stochastic systems.

One of the key techniques in the paper is the obtained solution to the forward and backward stochastic differential equation (FBSDE) associated with the maximum principle for an optimal control problem. Actually, with the analytical FBSDE solution, a necessary and sufficient solvability condition of the optimal control, under mild conditions, is derived. Accordingly, the stabilization condition is presented by defining an Lyaponuv functional via the solution to the FBSDE and the optimal cost function.

It is worth of pointing out that the presented results are different from the previous works \cite{huangjh} for stabilization and also different from the works \cite{yong}, \cite{sunj}, \cite{sunj2} on optimal control.
\end{abstract}

\begin{IEEEkeywords}
Mean-field systems, Riccati equation, stabilization, optimal control.
\end{IEEEkeywords}

%
\IEEEpeerreviewmaketitle

\section{Introduction}
\label{sec:1}
This paper mainly considers the stabilization and control problems for linear continuous-time mean-field systems. Different from the standard optimal control problems, the mean field LQ control is involved with the dynamic systems described by mean-field stochastic differential/difference equations (MF-SDEs). The study of MF-SDEs have received much attention since 1950s, please see \cite{kac}-\cite{dawson2} and references therein. Based on the system theory developed on MF-SDEs as mentioned in the above, the optimal control problems especially the linear quadratic optimal control and related problems have been studied in recent years, one can refer to \cite{buck1}-\cite{ni2}.

Particularly, the pioneering study of the LQ control problem for continuous-time mean-field systems was given by \cite{yong}, in which solvability conditions in terms of operator criteria were provided. \cite{sunj} and \cite{sunj2} further investigated the open-loop solvability and closed-loop solvability, respectively. \cite{huangjh} dealt with the infinite horizon case, several notions of stability and relationships between them was discussed, where the optimal controller of infinite horizon case can be presented via algebraic Riccati equations. It is also noted that the maximum principle for mean-field systems was presented in \cite{sunj}, \cite{buck1}, \cite{haf}, \cite{juanli} and \cite{jli2}. 

It is worth of pointing out that all the aforementioned literatures mainly focused on the optimal control problem. The stabilization problem for mean-field system remains least investigated and little progress was made.


The study of stabilization and optimal control problems for mean-field system of infinite horizon case is significant and essential. In fact,  if the system could not be stabilizable, the study of optimal control problem of infinite horizon case would be meaningless. On the other hand, the study of stabilization problem is an important aspect in classical control problem, see \cite{ande} and \cite{fll}.

This paper focuses on investigating the stabilization problems for continuous-time mean-field systems which is a companion paper of \cite{qz} where the discrete-time case has been considered. Firstly, with the maximum principle, the optimal controller is designed based on a coupled Riccati equation which is derived from the solution to a FBSDE regarding with the dynamics of costate and state. The solvability condition (necessary and sufficient) of optimal control is then obtained via the coupled Riccati equation. Secondly, a coupled ARE is obtained through the convergence analysis of Riccati equations for finite horizon case, and the infinite horizon optimal controller is then obtained accordingly. Finally, the mean square stabilization for the mean-field systems is investigated with an Lyapunov function defined with the optimal cost function. It is to be shown that, under the assumption of exact detectability, we show that the mean-field system is stabilizable if and only if the coupled ARE admits a unique positive semi-definite solution. Moreover, under the exact observability assumption, the mean-field system is stabilizable if and only if the coupled ARE admits a unique positive definite solution.

It should be highlighted that the weighting matrices $R$ and $R+\bar{R}$ in cost function are only required to be positive semi-definite in exploring the stabilizing conditions and optimal control, which is a weaker assumption than the one in previous works \cite{yong} and \cite{huangjh} where the weighting matrices are assumed to be positive definite. Furthermore, another thing to note is that the results obtained in this paper can be reduced to classic stochastic LQ control case including the solvability condition and the stabilization condition.

The rest of the paper is organized as follows. As the preliminary work for stabilization control, the finite horizon mean-field LQ control problem is firstly investigated in Section II. In Section III, we are devoted to solve infinite horizon mean-field LQ control and stabilization problems. Some numerical examples are provided in Section IV. This paper is concluded in Section V. Finally, relevant proofs are given in Appendices.

Throughout this paper, the following notations and definition will be used.

\textbf{Notations and definition}: Superscript $'$ signifies the transpose of a matrix. $\mathcal{R}^{n}$ represents the $n$-dimensional Euclidean space; $I_{n}$ denotes the unit matrix with rank $n$; Real symmetric matrix $A>0$ (or $\geq 0$) is used to indicate that $A$ is strictly positive definite (or positive semi-definite). $B^{-1}$ represents the inverse of real matrix $B$, and $C^\dag$ means the Moore-Penrose inverse of $C$. $\{\Omega,\mathcal{F},\mathcal{P},\{\mathcal{F}_{t}\}_{t\geq 0}\}$ denotes a complete probability space, with natural filtration $\{\mathcal{F}_{t}\}_{t\geq 0}$ generated by the standard Brownian motion $W_{t}$ and system initial state augmented by all the $\mathcal{P}$-null sets. $E[\cdot|\mathcal{F}_{t}]$ means the conditional expectation with respect to $\mathcal{F}_{t}$. $\mathcal{I}_{V}$ denotes the indicator function of set $V$ with $w\in V$, $\mathcal{I}_{V}=1$, otherwise $\mathcal{I}_{V}=0$. $a.s.$ means in the sense of `almost surely'.

\begin{definition}\label{def:1}
For random vector $x$, if $E(x'x)=0$, we call it zero random vector, i.e., $x=0$, $a.s.$.
\end{definition}

\section{Finite Horizon Stochastic Mean-Field LQ Control Problem}


Consider the linear continuous-time stochastic mean-field system as follows:
\begin{equation}\label{ps1}
\left\{ \begin{array}{ll}
dx_{t}=(Ax_{t}+\bar{A}Ex_{t}+Bu_{t}+\bar{B}Eu_{t})dt\\
~~~~~+(Cx_{t}+\bar{C}Ex_{t}+Du_{t}+\bar{D}Eu_{t})dW_{t},\\
x_{0}=\zeta,\\
\end{array} \right.
\end{equation}
where $x_{t}\in\mathcal{R}^{n}$, $u_{t}\in \mathcal{R}^{m}$ are the system state process and the control process, respectively. The coefficients $A,\bar{A},C,\bar{C}\in \mathcal{R}^{n\times n}$, and  $B,\bar{B},D,\bar{D}\in \mathcal{R}^{n\times m}$ are known deterministic coefficient matrices. $W_{t}$ is one dimensional standard Brownian motion, defined on a complete filtered probability space $(\Omega, \mathcal{F}, \mathcal{P}, \{\mathcal{F}_{t}\}_{t\geq 0})$. $\mathcal{F}_{t}$ is the natural filtration generated by $W_{t}$ and the initial state $\zeta$ augmented by all the $\mathcal{P}$-null sets. $E$ is the mathematical expectation.

By taking expectations on both sides of \eqref{ps1}, we have that
\begin{equation}\label{ps20}
  dEx_{t}=\left[(A+\bar{A})Ex_{t}+(B+\bar{B})Eu_{t}\right]dt.
\end{equation}

Associated with system equation \eqref{ps1}, the cost function is given as:
\begin{align}\label{ps2}
  J_{T}&=E\Big\{\int_{0}^{T}\Big[x_{t}'Qx_{t}+(Ex_{t})'\bar{Q}Ex_{t}+u_{t}'Ru_{t}\notag\\
  &+(Eu_{t})'\bar{R}Eu_{t}\Big]dt+x_{T}'P_Tx_{T}+(Ex_{T})'\bar{P}_TEx_{T}\Big\},
\end{align}
where $Q,\bar{Q},R,\bar{R}$, $P_T,\bar{P}_T$ are deterministic symmetric matrices with appropriate dimensions.

The admissible control set is defined as
\begin{align}\label{adm}
  \mathcal{U}[0,T]&=\Big\{u:[0,T]\times \Omega\rightarrow \mathcal{R}^m\Big|u_{t}~ \text{is} ~\mathcal{F}_{t}-\text{adapted},\notag\\
  &\text{and}~ E\int_{0}^{T}|u_{s}|^{2}ds<\infty  \Big\}.
\end{align}

It can be easily shown that by using the contraction mapping theorem, for arbitrary $(x_0, u_t)\in \mathcal{R}^n \times \mathcal{U}[0,T]$, mean-field SDE \eqref{ps1} admits a unique solution, see \cite{yong}.

Like in classical control theory, it is clear that a linear mean-field system is mean-square staibilizable if and only if there exists optimal controller to stabilize the system.
Thus, in order to investigate the stabilization, the first step is to derive the optimal controller for the mean-filed systems. The LQ control problem of finite horizon is stated as follows:
\begin{problem}\label{prob:prob1}
For system \eqref{ps1} associated with cost function \eqref{ps2}, find $\mathcal{F}_{t}$-adapted optimal controller $u_{t}\in \mathcal{U}[0,T]$ to minimize cost function \eqref{ps2}.
\end{problem}

The following standard assumption for the weighting matrices is made below to solve Problem \ref{prob:prob1}.
\begin{assumption}\label{ass:ass1}
For $t\in [0,T]$, $Q\geq 0$, $Q+\bar{Q}\geq 0$, $R\geq0$, $R+\bar{R}\geq 0$ and $P_T\geq 0$, $P_T+\bar{P}_T\geq 0$ in cost function \eqref{ps2}.
\end{assumption}

%
%
%
%


\subsection{Maximum Principle}
The necessary condition for system \eqref{ps1} to minimize cost function \eqref{ps2}, i.e., maximum principle, is introduced in this section which serves as a basic tool in solving Problem \ref{prob:prob1}.

From \eqref{adm} we know that $\mathcal{U}[0,T]$ is closed convex subset of $\mathcal{R}^n$, then the maximum principle can be derived by using convex variational method. In the case of control set being non-convex, the maximum principle can be studied by using spike variation, which will not be discussed here.


\begin{theorem}\label{thm:maximum}
The optimal controller $u_{t}$ minimizing $J_T$ satisfies the following equilibrium equation:
\begin{align}\label{uu}
 0&= Ru_{t}+\bar{R}Eu_{t}+E\Bigg\{\left[
  \begin{array}{cc}
   \hspace{-2mm}  B\hspace{-2mm}\\
    \hspace{-2mm}0\hspace{-2mm}
  \end{array}
\hspace{-2mm}\right]'p_{t}+\left[
  \begin{array}{cc}
   \hspace{-2mm}  D\hspace{-2mm}\\
    \hspace{-2mm}0\hspace{-2mm}
  \end{array}
\hspace{-2mm}\right]'q_{t}\hspace{-1mm}\notag\\
&~~~~~+E\Big\{\left[
  \begin{array}{cc}
   \hspace{-2mm}  \bar{B}\hspace{-2mm}\\
    \hspace{-2mm}B+\hspace{-1mm}\bar{B}\hspace{-2mm}
  \end{array}
\hspace{-2mm}\right]'p_{t}+\hspace{-1mm}\left[
  \begin{array}{cc}
   \hspace{-2mm}  \bar{D}\hspace{-2mm}\\
    \hspace{-2mm}0\hspace{-2mm}
  \end{array}
\hspace{-2mm}\right]'q_{t}\Big\}\Bigg|\mathcal{F}_{t}
  \Bigg\},
\end{align}
and $p_{t}$, $q_{t}$ satisfy the following backward stochastic differential equation (BSDE).
\begin{equation}\label{bsde}
\left\{ \begin{array}{ll}
dp_{t}=-\Bigg\{\left[
  \begin{array}{cc}
   \hspace{-2mm} A\hspace{-2mm} & \hspace{-2mm}\bar{A} \\
   \hspace{-2mm} 0    \hspace{-2mm}            & \hspace{-2mm}A\hspace{-1mm}+\hspace{-1mm}\bar{A}\\
  \end{array}
\hspace{-2mm}\right]'p_{t}\hspace{-1mm}+\hspace{-1mm}\left[
  \begin{array}{cc}
   \hspace{-2mm} C\hspace{-2mm} & \hspace{-2mm}\bar{C} \\
   \hspace{-2mm} 0    \hspace{-2mm}            & \hspace{-2mm}0\\
  \end{array}
\hspace{-2mm}\right]'q_{t}\\
~~~~+\left[
  \begin{array}{cc}
   \hspace{-2mm}  I_{n}\hspace{-2mm}\\
    \hspace{-2mm}0\hspace{-2mm}
  \end{array}
\hspace{-2mm}\right](Qx_{t}+\bar{Q}Ex_{t})\Bigg\}dt+q_{t}dW_{t},\\
p_{T}=\left[
  \begin{array}{cc}
   \hspace{-2mm} P_{T}\hspace{-2mm} & \hspace{-2mm}\bar{P}_{T}^{(1)} \\
   \hspace{-2mm} \bar{P}_{T}^{(2)}    \hspace{-2mm}            & \hspace{-2mm}\bar{P}_{T}^{(3)}\\
  \end{array}
\hspace{-2mm}\right]\left[
  \begin{array}{cc}
   \hspace{-2mm}  x_{T}\hspace{-2mm}\\
    \hspace{-2mm}Ex_{T}\hspace{-2mm}
  \end{array}
\hspace{-2mm}\right],
\end{array} \right.
\end{equation}
where $P_{T}$, $\bar{P}_{T}^{(1)}$ are given in cost function \eqref{ps2} and $\bar{P}_{T}^{(2)}=\bar{P}_{T}^{(3)}=0$. The adjoint equation \eqref{bsde} together with system state \eqref{ps1} form the system of FBSDE.

\end{theorem}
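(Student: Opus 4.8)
The plan is to use the convex variational method, exploiting that $\mathcal{U}[0,T]$ is a linear space. Fix an optimal control $u_{t}$ with state $x_{t}$, take any $v_{t}\in\mathcal{U}[0,T]$ and $\epsilon\in\mathcal{R}$, and let $x_{t}^{\epsilon}$ be the state driven by $u_{t}+\epsilon v_{t}$. Linearity of \eqref{ps1} gives $x_{t}^{\epsilon}=x_{t}+\epsilon y_{t}$, where the variation $y_{t}$ solves the dynamics of \eqref{ps1} with zero initial value and with $(u_{t},Eu_{t})$ replaced by $(v_{t},Ev_{t})$. I would then work with the augmented processes $Y_{t}=(y_{t}',(Ey_{t})')'$ and $V_{t}=(v_{t}',(Ev_{t})')'$, for which combining \eqref{ps1} and \eqref{ps20} yields $dY_{t}=(\mathcal{A}Y_{t}+\mathcal{B}V_{t})\,dt+(\mathcal{C}Y_{t}+\mathcal{D}V_{t})\,dW_{t}$, $Y_{0}=0$, where $\mathcal{A},\mathcal{C}$ are exactly the $2n\times 2n$ matrices whose transposes appear in the drift of \eqref{bsde}, and $\mathcal{B},\mathcal{D}$ are the input matrices whose block entries are read off from the $B,\bar{B}$- and $D,\bar{D}$-terms of \eqref{uu}. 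As a preliminary, since $x_{t}$ is square-integrable and the coefficients of \eqref{bsde} are affine in $(p_{t},q_{t})$ with square-integrable terminal data, standard linear BSDE theory gives a unique adapted square-integrable pair $(p_{t},q_{t})$, so the statement is well posed.

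The next step is the first-order condition. Expanding $J_{T}(u+\epsilon v)$ from \eqref{ps2} and using Assumption~\ref{ass:ass1} and the symmetry of the weighting matrices, optimality of $u_{t}$ forces $\frac{d}{d\epsilon}J_{T}(u+\epsilon v)\big|_{\epsilon=0}=0$, that is,
\[
0=E\Big\{\int_{0}^{T}\big[x_{t}'Qy_{t}+(Ex_{t})'\bar{Q}Ey_{t}+u_{t}'Rv_{t}+(Eu_{t})'\bar{R}Ev_{t}\big]dt+x_{T}'P_{T}y_{T}+(Ex_{T})'\bar{P}_{T}Ey_{T}\Big\}.
\]
To trade the $y_{t}$-dependent terms for control-dependent ones, I would apply It\^{o}'s formula to $p_{t}'Y_{t}$ with $(p_{t},q_{t})$ the solution of \eqref{bsde}: the drift terms carrying $\mathcal{A}$ and $\mathcal{C}$ cancel in pairs precisely because the generator of \eqref{bsde} uses $\mathcal{A}'$ and $\mathcal{C}'$, the forcing term of \eqref{bsde} reproduces $-y_{t}'(Qx_{t}+\bar{Q}Ex_{t})$, and the terminal condition of \eqref{bsde} reproduces $x_{T}'P_{T}y_{T}+(Ex_{T})'\bar{P}_{T}Ey_{T}$ in expectation. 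Since $Y_{0}=0$ and the stochastic integral has zero mean (integrability from $u,v\in\mathcal{U}[0,T]$), integration and expectation give the duality identity
\[
E\int_{0}^{T}\big[x_{t}'Qy_{t}+(Ex_{t})'\bar{Q}Ey_{t}\big]dt+E[x_{T}'P_{T}y_{T}]+(Ex_{T})'\bar{P}_{T}Ey_{T}=E\int_{0}^{T}\big(p_{t}'\mathcal{B}V_{t}+q_{t}'\mathcal{D}V_{t}\big)dt.
\]
Subtracting this from the first-order condition cancels all $y_{t}$-terms and leaves $E\int_{0}^{T}\big[p_{t}'\mathcal{B}V_{t}+q_{t}'\mathcal{D}V_{t}+u_{t}'Rv_{t}+(Eu_{t})'\bar{R}Ev_{t}\big]dt=0$ for every $v\in\mathcal{U}[0,T]$.

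Finally I would localise this identity to recover \eqref{uu}. Expanding $p_{t}'\mathcal{B}V_{t}$ and $q_{t}'\mathcal{D}V_{t}$ in the $n$-dimensional blocks of $p_{t}$ and $q_{t}$, the integrand becomes $g_{t}'v_{t}+h_{t}'Ev_{t}$, where $g_{t}$ is exactly the coefficient of $v_{t}$ in \eqref{uu} (the part $Ru_{t}$ together with the terms free of $\bar{B},\bar{D}$) and $h_{t}$ is exactly the coefficient of $Ev_{t}$ (the part $\bar{R}Eu_{t}$ together with the $\bar{B},B+\bar{B},\bar{D}$ terms). Since $v_{t}$ is $\mathcal{F}_{t}$-adapted while $Ev_{t}$ is deterministic, $E[g_{t}'v_{t}]=E[(E[g_{t}|\mathcal{F}_{t}])'v_{t}]$ and $E[h_{t}'Ev_{t}]=E[(Eh_{t})'v_{t}]$, hence $E\int_{0}^{T}(E[g_{t}|\mathcal{F}_{t}]+Eh_{t})'v_{t}\,dt=0$ for all admissible $v$; the fundamental lemma of the calculus of variations for adapted $L^{2}$ processes then gives $E[g_{t}|\mathcal{F}_{t}]+Eh_{t}=0$ for a.e.\ $t$, a.s.. Using $E[Ru_{t}|\mathcal{F}_{t}]=Ru_{t}$ and that the constant $Eh_{t}$ may be moved inside $E[\cdot|\mathcal{F}_{t}]$, this is exactly \eqref{uu}. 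I expect the principal difficulty to be this last step in the mean-field setting, namely keeping track of which contributions carry a conditional expectation $E[\cdot|\mathcal{F}_{t}]$ (those coupled to $v_{t}$) and which carry only an unconditional one (those coupled to the deterministic $Ev_{t}$), since this asymmetry is precisely what yields the nested-expectation form of \eqref{uu}; a secondary technical chore is keeping the augmented $2n\times 2n$ block algebra consistent so that the It\^{o} cross terms cancel cleanly.
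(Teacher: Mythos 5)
Your proposal is correct, but it reaches \eqref{uu}--\eqref{bsde} by a genuinely different route than the paper. You postulate the adjoint BSDE \eqref{bsde}, invoke standard linear BSDE theory for existence and uniqueness of $(p_{t},q_{t})$, and then derive the stationarity condition by the classical duality argument: It\^{o}'s formula applied to $p_{t}'Y_{t}$, cancellation of the $\mathcal{A},\mathcal{C}$ cross terms against the $\mathcal{A}',\mathcal{C}'$ terms in the generator, and localisation of the resulting integral identity over arbitrary $v\in\mathcal{U}[0,T]$. The paper works forward from the variation instead: it represents $\delta x_{t}$ explicitly through the fundamental matrix $\Phi_{t}$ of the homogeneous augmented SDE (variation of constants, \emph{Theorem 6.14} of \cite{yongj}), substitutes this into $\delta J_{T}$, handles the resulting stochastic-integral term with the Martingale Representation Theorem (introducing $\eta_{t}$), and only then \emph{defines} $p_{s}$ and $q_{s}$ by the explicit formulas \eqref{ps}--\eqref{qs}, verifying a posteriori that this pair solves \eqref{bsde}. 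The two arguments are equivalent in substance, but they buy different things: the paper's construction produces closed-form expressions for the costate (in particular $q_{s}=(\Phi_{s}')^{-1}\eta_{s}-\mathcal{C}'p_{s}$) without appealing to abstract BSDE solvability, whereas your version is shorter, avoids the fundamental-matrix computations, and makes transparent exactly where the conditional expectation $E[\cdot|\mathcal{F}_{t}]$ (paired with $v_{t}$) versus the plain expectation (paired with the deterministic $Ev_{t}$) enters \eqref{uu} --- which is indeed the delicate point you flag. One step to make explicit in a full write-up: the localisation requires that $E[g_{t}|\mathcal{F}_{t}]+Eh_{t}$ is itself an admissible variation (adapted and square-integrable), which follows from the $L^{2}$ bounds on $(p,q,u)$; with that noted, the ``fundamental lemma'' is just the choice $v_{t}=E[g_{t}|\mathcal{F}_{t}]+Eh_{t}$.
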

\begin{proof}
See Appendix A.
\end{proof}

\begin{remark}Obviously, the presented maximum principle is different from the one given in previous works \cite{yong} and \cite{jli2} in the following aspects:
 \begin{itemize}
 \item The costate defined in this paper has different dimension from the costate defined in previous works of \cite{yong} and \cite{jli2}.
\item The adjoint equation of previous works (\cite{yong} and \cite{jli2}) are mean-field BSDE, i.e., the mathematical expectation of the costate $EY(s), EZ(s)$ are involved in the BSDE which are more complicated than the BSDE of this paper. The mathematical expectations of the costate are not involved in \eqref{bsde}, which provides the convenience for dealing with the stabilization problem in the next section. Actually, the Lyapunov functional candidate for stabilization is to be defined with the solution to the FBSDE given in this paper, and thus the necessary and sufficient stabilization conditions can be derived.

 \item  It is noted that the term $Eu_t$ was not involved in both the dynamic system and the cost function in \cite{jli2}. 
\end{itemize}
\end{remark}

\subsection{Solution to Problem \ref{prob:prob1}}
Once the maximum principle is derived in Theorem \ref{thm:maximum}, we are in the position to state the main result in this section.

\begin{theorem}\label{thm:main}
Under Assumption \ref{ass:ass1}, Problem \ref{prob:prob1} is uniquely solved if and only if $\Upsilon_{t}^{(1)}>0$ and $\Upsilon_{t}^{(2)}>0$ for $t\in[0,T]$, where $\Upsilon_{t}^{(1)}$ and $\Upsilon_{t}^{(2)}$ are given as:
\begin{align}
\Upsilon_{t}^{(1)}&=R+D'P_{t}D,\label{upsi1}\\
\Upsilon_{t}^{(2)}&=R+\bar{R}+(D+\bar{D})'P_{t}(D+\bar{D}),\label{upsi2}
\end{align}
and $P_{t}$ and $\bar{P}_{t}$ satisfy the coupled Riccati equation:
\begin{align}
-\dot{P}_{t}&=Q\hspace{-1mm}+\hspace{-1mm}P_{t}A\hspace{-1mm}+\hspace{-1mm}A'P_{t}\hspace{-1mm}+\hspace{-1mm}C'P_{t}C\hspace{-1mm}-\hspace{-1mm}[M_{t}^{(1)}]'[\Upsilon_{t}^{(1)}]^{-1}M_{t}^{(1)}, \label{Ric1}\\
-\dot{\bar{P}}_{t}&=\bar{Q}+P_{t}\bar{A}+\bar{A}'P_{t}+(A+\bar{A})'\bar{P}_{t}+\bar{P}_{t}(A+\bar{A})\notag\\
&+\bar{C}'P_{t}\bar{C}+C'P_{t}\bar{C}+\bar{C}'P_{t}C\notag\\
&+[M_{t}^{(1)}]'[\Upsilon_{t}^{(1)}]^{-1}M_{t}^{(1)}-[M_{t}^{(2)}]'[\Upsilon_{t}^{(2)}]^{-1}M_{t}^{(2)}, \label{Ric2}
\end{align}
with final condition $P_T, \bar{P}_T$ given in \eqref{ps2}, where  $M_{t}^{(1)}$, $M_{t}^{(2)}$ are given by
\begin{align}
M_{t}^{(1)}&=B'P_{t}+D'P_{t}C,\label{mt1}\\
M_{t}^{(2)}&=(B\hspace{-1mm}+\hspace{-1mm}\bar{B})'(P_{t}+\bar{P}_{t})\hspace{-1mm}+\hspace{-1mm}(D+\bar{D})'P_{t}
(C+\bar{C}).\label{mt2}
\end{align}

In this case, the optimal controller $u_{t}$ can be presented as,
\begin{equation} \label{opti}
u_{t}=K_{t}x_{t}+\bar{K}_{t}Ex_{t},
\end{equation}
where
\begin{align}
K_{t}&=-[\Upsilon_{t}^{(1)}]^{-1}M_{t}^{(1)},\label{kt}\\
\bar{K}_{t}&=-\left\{[\Upsilon_{t}^{(2)}]^{-1}M_{t}^{(2)}-[\Upsilon_{t}^{(1)}]^{-1}M_{t}^{(1)}\right\},\label{barkt}
\end{align}

and the optimal cost function is given as:
\begin{equation}\label{op}
J_{T}^{*}=E(x_{0}'P_{0}x_{0})+Ex_{0}'\bar{P}_{0}Ex_{0}.
\end{equation}
Moreover, the optimal costate $p_{t}$ of \eqref{bsde} and state $x_{t}$, $Ex_{t}$ obeys the following relationship (the solution to FBSDE \eqref{ps1} and \eqref{bsde}),
\begin{align}\label{pt}
p_{t}=\left[
  \begin{array}{cc}
   \hspace{-2mm} P_{t}\hspace{-2mm} & \hspace{-2mm}\bar{P}_{t}^{(1)} \\
   \hspace{-2mm} \bar{P}_{t}^{(2)}    \hspace{-2mm}            & \hspace{-2mm}\bar{P}_{t}^{(3)}\\
  \end{array}
\hspace{-2mm}\right]\left[
  \begin{array}{cc}
   \hspace{-2mm}  x_{t}\hspace{-2mm}\\
    \hspace{-2mm}Ex_{t}\hspace{-2mm}
  \end{array}
\hspace{-2mm}\right],
\end{align}
where $P_{t}$ satisfies Riccati equation \eqref{Ric1}, and $\bar{P}_{t}^{(1)}+\bar{P}_{t}^{(2)}+\bar{P}_{t}^{(3)}=\bar{P}_{t}$ satisfies Riccati equation \eqref{Ric2}.

\end{theorem}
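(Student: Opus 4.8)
The plan is to build everything on the maximum principle of Theorem~\ref{thm:maximum}. Since $\mathcal{U}[0,T]$ is convex and, under Assumption~\ref{ass:ass1}, $J_T$ is convex in $u$, the equilibrium equation \eqref{uu} coupled with the FBSDE \eqref{ps1}--\eqref{bsde} is necessary \emph{and} sufficient for optimality; hence unique solvability of Problem~\ref{prob:prob1} is equivalent to unique solvability of this FBSDE for every initial state $\zeta$, and the whole argument reduces to solving that system explicitly.

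The first step is to postulate the linear costate--state relation \eqref{pt}, reading the terminal value of the block matrix off the terminal condition in \eqref{bsde}. Applying It\^o's formula to \eqref{pt}, using the state dynamics \eqref{ps1} and its mean \eqref{ps20}, and matching the diffusion ($dW_t$) and drift ($dt$) terms against \eqref{bsde}, identifies $q_t$ and produces an identity that must hold for all realizations of $x_t$ and $Ex_t$. Splitting this identity into the component along $x_t-Ex_t$ and the component along $Ex_t$, together with the feedback expression for $u_t$ found below, yields exactly the coupled Riccati equations \eqref{Ric1}--\eqref{Ric2} for $P_t$ and for $\bar P_t=\bar P_t^{(1)}+\bar P_t^{(2)}+\bar P_t^{(3)}$ (only this sum, not the individual blocks, is pinned down). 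To get the feedback expression, substitute \eqref{pt} and the formula for $q_t$ into \eqref{uu}: taking expectations collapses \eqref{uu} to $\Upsilon_t^{(2)}Eu_t+M_t^{(2)}Ex_t=0$, so $Eu_t=-[\Upsilon_t^{(2)}]^{-1}M_t^{(2)}Ex_t$ whenever $\Upsilon_t^{(2)}>0$; feeding this back into \eqref{uu} leaves $\Upsilon_t^{(1)}u_t+M_t^{(1)}x_t+(\text{terms in }Ex_t)=0$, giving \eqref{opti}--\eqref{barkt} whenever $\Upsilon_t^{(1)}>0$. This makes clear that invertibility of $\Upsilon_t^{(1)}$ and $\Upsilon_t^{(2)}$ is exactly what is needed to close the FBSDE.

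For the ``if'' direction I would complete the square: assuming \eqref{Ric1}--\eqref{Ric2} has a solution $(P_t,\bar P_t)$ on $[0,T]$ with $\Upsilon_t^{(1)}>0$ and $\Upsilon_t^{(2)}>0$, set $V_t=E(x_t'P_tx_t)+(Ex_t)'\bar P_tEx_t$, compute $-\dot V_t$ via It\^o's formula and \eqref{ps20}, and use the Riccati equations to cancel terms, obtaining
\begin{align*}
J_T(u)={}&E(x_0'P_0x_0)+(Ex_0)'\bar P_0Ex_0\\
&+E\!\int_0^T\!\big(\eta_t'\Upsilon_t^{(1)}\eta_t+(Ev_t)'\Upsilon_t^{(2)}Ev_t\big)\,dt,
\end{align*}
where $v_t=u_t-K_tx_t-\bar K_tEx_t$ and $\eta_t=v_t-Ev_t$. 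Since $\Upsilon_t^{(1)},\Upsilon_t^{(2)}>0$ this forces $J_T(u)\ge J_T^*$, with equality only when $v_t=0$ $a.s.$, which simultaneously yields optimality, uniqueness, and the cost formula \eqref{op}.

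The ``only if'' direction is the part I expect to be hardest. Assuming Problem~\ref{prob:prob1} is uniquely solvable, I would first observe that the same property is inherited by the sub-problems on $[s,T]$, so the value function $V(s,\cdot)$ is well defined and can be shown to be a quadratic form $E(\zeta'P_s\zeta)+(E\zeta)'\bar P_sE\zeta$; writing its dynamic-programming identity over $[s,s+\epsilon]$ then forces $P_s,\bar P_s$ to satisfy \eqref{Ric1}--\eqref{Ric2}, in particular ruling out blow-up of the Riccati solution on $[0,T]$. Positivity of $\Upsilon_t^{(1)}$ and $\Upsilon_t^{(2)}$ would come from a control variation $\delta$ localized near an arbitrary $t$: the leading cost increment has the form $E\big[(\delta-E\delta)'\Upsilon_t^{(1)}(\delta-E\delta)\big]+(E\delta)'\Upsilon_t^{(2)}(E\delta)$, and if $\Upsilon_t^{(1)}$ (resp.\ $\Upsilon_t^{(2)}$) were not positive definite one could pick a purely fluctuating (resp.\ deterministic) variation making this increment non-positive, contradicting uniqueness of the minimizer. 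The delicate points --- carrying out that localized expansion while separating the mean dynamics \eqref{ps20} from the fluctuation dynamics, and excluding degeneracy of the Riccati solution --- are where the real work lies.
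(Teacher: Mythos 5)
Your sufficiency argument coincides with the paper's: It\^o's formula applied to $x_t'P_tx_t+Ex_t'\bar P_tEx_t$, cancellation via \eqref{Ric1}--\eqref{Ric2}, and the completed-square representation of $J_T$ (the paper's \eqref{cost2}), from which optimality, uniqueness of the minimizer and \eqref{op} all follow; likewise the derivation of the coupled Riccati equations by matching drift and diffusion terms of a postulated costate--state relation against \eqref{bsde}. Two points you treat as cosmetic are not. First, the paper's ansatz \eqref{ptt} is affine, carrying an additive process $\Theta_t$, and a separate argument (uniqueness of the solution of the linear BSDE \eqref{simpl2}) is needed to conclude $\Theta_t\equiv 0$, i.e.\ that \eqref{pt} really is \emph{the} solution of the FBSDE rather than a convenient guess. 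Second, \eqref{Ric1}--\eqref{Ric2} contain $[\Upsilon_t^{(i)}]^{-1}$, whose existence is the very conclusion of the necessity part; the paper breaks this circularity by first working with the Moore--Penrose version \eqref{lkj}--\eqref{barp} under the regularity condition \eqref{reg1} and only at the end replacing $\dag$ by $-1$. Your dynamic-programming variant needs an analogous device, since ``the Riccati equation is solvable on $[0,T]$'' is not yet a meaningful hypothesis in the only-if direction as you state it.

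The substantive gap is in how you obtain $\Upsilon_t^{(1)}>0$ and $\Upsilon_t^{(2)}>0$; your sketch merges two mechanisms that the paper keeps separate, and defers exactly the part that constitutes the proof. Nonnegativity $\Upsilon_t^{(i)}\ge 0$ is extracted from $J_T\ge 0$ (Assumption \ref{ass:ass1}) alone, not from uniqueness: the paper inserts the feedback perturbation \eqref{con1}--\eqref{con2}, built from unit eigenvectors of $\Upsilon_t^{(i)}$ scaled by $\delta\,\mathcal{I}^{(i)}_{l}/|\lambda_t^{(i)}|^{1/2}$ and supported on $\{t:\lambda_t^{(i)}<-1/l\}$, and lets $\delta\to\infty$; if that set had positive Lebesgue measure the cost \eqref{con3} would tend to $-\infty$. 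A variation ``localized near an arbitrary $t$'' contributes nothing to the integral cost, so the measure-theoretic bookkeeping (the sets $\mathcal{I}^{(i)}_{l}$ and the union over $l$) is essential rather than a refinement. Uniqueness of the minimizer enters only afterwards, to upgrade $\ge 0$ to $>0$: if $\Upsilon_t^{(2)}$ were singular on a set of positive measure, the paper exhibits two distinct optimal controls $u_t^{(1)}\ne u_t^{(2)}$ differing by the kernel element $\bar{\mathbf{L}}=\bar z-[\Upsilon_t^{(2)}]^{\dag}\Upsilon_t^{(2)}\bar z$, both attaining the value \eqref{op}, a contradiction. Your plan can be completed along these lines, but the step you set aside as ``where the real work lies'' is precisely the content of the paper's necessity proof, so as submitted the argument is incomplete.
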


\begin{proof}
See Appendix B.
\end{proof}

\begin{remark}
It is noted that the obtained results in Theorem \ref{thm:main} differs from previous works \cite{yong} in the following aspects:

\begin{itemize}
  \item Theorem \ref{thm:main} provides the necessary and sufficient solvability condition in terms of the positive definiteness of matrices $\Upsilon_{t}^{(1)}$ and $\Upsilon_{t}^{(2)}$ which is new to our knowledge. While the solvability conditions in previous works \cite{yong} was given in terms of operator.

\item The weighting matrices $R$ and $R+\bar{R}$ in cost function (3) are only assumed to be positive semi-definite, which weakened the assumption of positive definiteness in \cite{yong}.

  \item The solution to FBSDE composed of \eqref{ps1} and \eqref{uu}-\eqref{bsde} is presented in this paper as in \eqref{pt}, which differs from that in previous works \cite{yong}. It can be seen that \eqref{pt} will play an important role in solving stabilization problem in infinite horizon case.
\end{itemize}

\end{remark}


\section{Infinite Horizon Stochastic Mean-field Problem}
\label{sec:3}

\subsection{Problem Formulation}
The optimal control in infinite horizon case and stabilization problem  will be explored in this section.

In order to consider the stabilization problem for mean-field systems,
%
 the infinite horizon cost function is described as:
\begin{equation}\label{ps200}\begin{split}
  J\hspace{-1mm}=\hspace{-1mm}E\int_{0}^{\infty}[x_{t}'Qx_{t}\hspace{-1mm}+\hspace{-1mm}
  (Ex_{t})'\bar{Q}Ex_{t}\hspace{-1mm}+\hspace{-1mm}u_{t}'Ru_{t}\hspace{-1mm}+\hspace{-1mm}(Eu_{t})'\bar{R}Eu_{t}]dt,
\end{split}\end{equation}
where weighting matrices $Q$, $\bar{Q}$, $R$, $\bar{R}$ are symmetric with appropriate dimensions.

The admissible control set for the infinite horizon case is given as below:
\begin{align}\label{adm2}
  \mathcal{U}[0,\infty)&=\Big\{u:[0,T]\times \Omega\rightarrow \mathcal{R}^m\Big|u_{t}~ \text{is} ~\mathcal{F}_{t}-\text{adapted},\notag\\
  &\text{and}~ E\int_{0}^{T}|u_{s}|^{2}ds<\infty  \Big\}.
\end{align}

To investigate the stabilization conditions for mean-field system, the following basic assumption is made throughout this section.
\begin{assumption}\label{ass:ass2}
$R\geq0$, $R+\bar{R}\geq0$, and $Q\geq 0$, $Q+\bar{Q}\geq 0$.
\end{assumption}
\begin{remark}
The weighting matrices $R, R+\bar{R}$ in Assumption \ref{ass:ass2} are just required to be positive semi-definite, which is weaker than previous works \cite{huangjh}, including the traditional stabilization results \cite{fll}, \cite{zhangw} and \cite{zhw}.
\end{remark}

Before stating the problem to be studied, we give several definitions at first.
\begin{definition}
System \eqref{ps1} with $u_{t}=0$ is called asymptotically mean square stable if for any initial values $x_{0}$, there holds $\lim_{t\rightarrow +\infty}E(x_{t}'x_{t})=0.$

\end{definition}

\begin{definition}
System \eqref{ps1} is said to be stabilizable in the mean square sense if there exists $\mathcal{F}_{t}$-adapted controller $u_{t}\in \mathcal{U}[0,\infty)$, such that for any random vector $x_{0}$, the closed loop of system \eqref{ps1} is asymptotically mean square stable.
\end{definition}

\begin{definition}\label{def:4}
Consider the following mean-field stochastic system
\begin{equation}\label{mf}
 \left\{ \begin{array}{ll}
 dx_{t}=(Ax_{t}+\bar{A}Ex_{t})dt+(Cx_{t}+\bar{C}Ex_{t})dW_{t},\\
\mathcal{Y}_{t}=\mathcal{Q}^{1/2}\mathbb{X}_{t},
\end{array} \right.
\end{equation}
where $\mathbb{X}_{t}=\left[
  \begin{array}{cc}
   \hspace{-2mm} x_{t}-Ex_{t}\hspace{-2mm}\\
    Ex_{t}     \hspace{-2mm}           \\
  \end{array}
\hspace{-2mm}\right]$, and $\mathcal{Q}=\left[
  \begin{array}{cc}
   \hspace{-2mm} Q\hspace{-2mm}&\hspace{-2mm} 0\hspace{-2mm}\\
    0     \hspace{-2mm}     &\hspace{-2mm} Q+\bar{Q}\hspace{-2mm}      \\
  \end{array}
\hspace{-2mm}\right]$.

System \eqref{mf}, $(A,\bar{A},C,\bar{C},\mathcal{Q}^{1/2})$ for simplicity, is said to be exact observable, if for any $T\geq 0$,
\begin{equation*}
  \mathcal{Y}_{t}= 0, ~a.s.,  ~\forall~ t\in[0,T]~\Rightarrow~x_{0}=0,
\end{equation*}
where $\mathcal{Y}_{t}=0$, $x_{0}=0$ implies $E(\mathcal{Y}_{t}'\mathcal{Y}_{t})=0, E(x_0'x_0)=0$, whose meaning is given in Definition \ref{def:1}.

\end{definition}

\begin{definition}\label{def:det1}
For system \eqref{mf}, $(A,\bar{A},C,\bar{C},\mathcal{Q}^{1/2})$ is said to be exact detectable, if for any $T\geq 0$,
\begin{equation*}
  \mathcal{Y}_{t}= 0, ~a.s., ~\forall~ t\in[0,T]~\Rightarrow~\lim_{t\rightarrow+\infty}E(x_{t}'x_{t})=0.
\end{equation*}
\end{definition}
 Like in previous works \cite{huang}, \cite{zhw} and \cite{zhangw}, the exact observability (exact detectability) introduced in Definition \ref{def:4} (Definition \ref{def:det1}) is a basic condition in tackling the stabilization problems for stochastic control systems.  Now we make the following assumptions.

\begin{assumption}\label{ass:ass4}
$(A,\bar{A},C,\bar{C},\mathcal{Q}^{1/2})$ is exact detectable.
\end{assumption}

\begin{assumption}\label{ass:ass3}
$(A,\bar{A},C,\bar{C},\mathcal{Q}^{1/2})$ is exact observable.
\end{assumption}

\begin{remark}
From Definition \ref{def:4} and Definition \ref{def:det1}, obviously we can conclude that if $(A,\bar{A},C,\bar{C},\mathcal{Q}^{1/2})$ is exact observable, then $(A,\bar{A},C,\bar{C},\mathcal{Q}^{1/2})$ is exact detectable. Thus the exact detectability assumption is weaker than the exact observability assumption.
\end{remark}

At the end of this section, the stabilization and control problems for mean-field systems of infinite horizon case to be investigated can be described as:
\begin{problem}\label{prob:prob2}
Find the casual and $\mathcal{F}_{t}$-adapted controller $u_{t}\in\mathcal{U}[0,\infty)$ to minimize cost function \eqref{ps200} and stabilize mean-field system \eqref{ps1} in the mean square sense.
\end{problem}

\subsection{Solution to Problem \ref{prob:prob2}}
For convenience of discussions in the below, we now re-denote $P_{t}$, $\bar{P}_{t}$, $K_{t}$ and $\bar{K}_{t}$ in \eqref{Ric1}, \eqref{Ric2}, \eqref{kt} and \eqref{barkt} respectively as $K_{t}(T)$, $\bar{K}_{t}(T)$, $P_{t}(T)$ and $\bar{P}_{t}(T)$ for purpose of showing they are dependent of the terminal time $T$. As usual, $P_{T}(T)$ and $\bar{P}_{T}(T)$ in \eqref{ps2} are set to be zero for infinite horizon control.

Firstly, the following lemma will be introduced without proof, which can be easily obtained from the proof of Theorem \ref{thm:main}.

\begin{lemma}\label{lem:lemma3}
Under Assumption \ref{ass:ass2}, suppose the following coupled Riccati equation is solvable:
\begin{align}
-\dot{P}_{t}(T)&=Q+P_{t}(T)A+A'P_{t}(T)\hspace{-1mm}+\hspace{-1mm}C'P_{t}(T)C\notag\\
&-[M_{t}^{(1)}(T)]'[\Upsilon_{t}^{(1)}(T)]^{\dag}M_{t}^{(1)}(T), \label{2-Ric1}\\
-\dot{\bar{P}}_{t}(T)&=\bar{Q}+P_{t}(T)\bar{A}+\bar{A}'P_{t}(T)+(A+\bar{A})'\bar{P}_{t}(T)\notag\\
&+\bar{P}_{t}(T)(A+\bar{A})+\bar{C}'P_{t}(T)\bar{C}+C'P_{t}(T)\bar{C}\notag\\
&+\bar{C}'P_{t}(T)C+[M_{t}^{(1)}(T)]'[\Upsilon_{t}^{(1)}(T)]^{\dag}M_{t}^{(1)}(T)\notag\\
&-[M_{t}^{(2)}(T)]'[\Upsilon_{t}^{(2)}(T)]^{\dag}M_{t}^{(2)}(T), \label{2-Ric2}
\end{align}
with final condition $P_T(T)=\bar{P}_T(T)=0$, and the following regular condition holds:
\begin{align}\label{refg}
 \Upsilon_{t}^{(i)}(T) [\Upsilon_{t}^{(i)}(T)]^{\dag}M_{t}^{(i)}(T)=M_{t}^{(i)}(T),~i=1,2,
\end{align}
where
\begin{align}
\Upsilon_{t}^{(1)}(T)&=R+D'P_{t}(T)D,\label{4-upsi1}\\
\Upsilon_{t}^{(2)}(T)&=R+\bar{R}+(D+\bar{D})'P_{t}(T)(D+\bar{D}),\label{4-upsi2}\\
M_{t}^{(1)}(T)&=B'P_{t}(T)+D'P_{t}(T)C,\label{4-mt1}\\
M_{t}^{(2)}(T)&=(B+\bar{B})'[P_{t}(T)+\bar{P}_{t}(T)]\notag\\
&+(D+\bar{D})'P_{t}(T)(C+\bar{C}).\label{4-mt2}
\end{align}
Then the cost function \eqref{ps2} with $P_{T}(T)=\bar{P}_{T}(T)=0$ is minimized by the following controller
\begin{equation} \label{2-opti}
u_{t}=\mathcal{K}_{t}(T)x_{t}+\bar{\mathcal{K}}_{t}(T)Ex_{t},
\end{equation}
where $\mathcal{K}_{t}(T), \bar{\mathcal{K}}_{t}(T)$ are given as
\begin{align}\label{2-k2}
&\mathcal{K}_t(T)=-[\Upsilon^{(1)}_t(T)]^{\dag}M^{(1)}_t(T),\\
&\bar{\mathcal{K}}_t(T)=-\{[\Upsilon^{(2)}_t(T)]^{\dag}M^{(2)}_t(T)-[\Upsilon^{(1)}_t(T)]^{\dag}M^{(1)}_t(T)\}.\notag
\end{align}
 The optimal cost function is
 \begin{align}\label{16}
   J_T^* & =E[x_{0}'P_{0}(T)x_{0}]+Ex_{0}'\bar{P}_{0}(T)Ex_{0}.
 \end{align}
\end{lemma}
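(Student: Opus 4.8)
\emph{Proof strategy.} The plan is to repeat, almost verbatim, the completion--of--squares computation behind Theorem \ref{thm:main}, with two bookkeeping changes: the Moore--Penrose inverse $[\Upsilon_t^{(i)}(T)]^{\dagger}$ replaces the ordinary inverse everywhere, and at each place where the proof of Theorem \ref{thm:main} used the cancellation $\Upsilon_t^{(i)}[\Upsilon_t^{(i)}]^{-1}=I$ one invokes instead the regular condition \eqref{refg}, namely $\Upsilon_t^{(i)}(T)[\Upsilon_t^{(i)}(T)]^{\dagger}M_t^{(i)}(T)=M_t^{(i)}(T)$ for $i=1,2$. Concretely, fix $T$ and $u\in\mathcal{U}[0,T]$, set $\bar P_t(T):=\bar P_t^{(1)}(T)+\bar P_t^{(2)}(T)+\bar P_t^{(3)}(T)$ in accordance with \eqref{pt}, and introduce $V_t:=E[x_t'P_t(T)x_t]+(Ex_t)'\bar P_t(T)Ex_t$. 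Applying It\^o's formula to $x_t'P_t(T)x_t$, adding the deterministic time--derivative of $(Ex_t)'\bar P_t(T)Ex_t$ read off from \eqref{ps20}, taking expectations, and integrating over $[0,T]$ with terminal data $P_T(T)=\bar P_T(T)=0$, produces an identity $-V_0=E\int_0^T\Phi_t\,dt$ with $\Phi_t$ an explicit quadratic form in $(x_t,Ex_t,u_t,Eu_t)$ and $(\dot P_t(T),\dot{\bar P}_t(T))$.

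Next I would substitute the coupled Riccati equations \eqref{2-Ric1}--\eqref{2-Ric2} into $\Phi_t$ and collect terms using the orthogonal splitting $x_t=(x_t-Ex_t)+Ex_t$, $u_t=(u_t-Eu_t)+Eu_t$, under which the control penalty splits as $E[u_t'Ru_t]+(Eu_t)'\bar REu_t=E[(u_t-Eu_t)'R(u_t-Eu_t)]+(Eu_t)'(R+\bar R)Eu_t$; the $[M_t^{(1)}(T)]'[\Upsilon_t^{(1)}(T)]^{\dagger}M_t^{(1)}(T)$ contributions from \eqref{2-Ric1} and \eqref{2-Ric2} cancel on the ``mean'' block, leaving the penalties $[M_t^{(1)}(T)]'[\Upsilon_t^{(1)}(T)]^{\dagger}M_t^{(1)}(T)$ for the zero--mean ``fluctuation'' part and $[M_t^{(2)}(T)]'[\Upsilon_t^{(2)}(T)]^{\dagger}M_t^{(2)}(T)$ for the mean part. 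Together with the linear--in--$u$ terms $2[M_t^{(i)}(T)]'(\cdot)$ supplied by It\^o, these are exactly what is needed to write $\Phi_t+L_t$ (with $L_t=x_t'Qx_t+(Ex_t)'\bar QEx_t+u_t'Ru_t+(Eu_t)'\bar REu_t$ the running cost in \eqref{ps2}) as the sum of the two perfect squares $E[\delta_t'\Upsilon_t^{(1)}(T)\delta_t]$ and $\bar\delta_t'\Upsilon_t^{(2)}(T)\bar\delta_t$, where $\delta_t:=(u_t-Eu_t)-\mathcal{K}_t(T)(x_t-Ex_t)$ and $\bar\delta_t:=Eu_t-[\mathcal{K}_t(T)+\bar{\mathcal{K}}_t(T)]Ex_t$ are the residuals of $u_t$ about the candidate feedback \eqref{2-opti}--\eqref{2-k2} (the $\delta_t$--$\bar\delta_t$ cross term vanishes since $\delta_t$ has zero mean and $\bar\delta_t$ is deterministic). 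The regular condition \eqref{refg} is precisely the algebraic hypothesis that lets this last step close with $\dagger$ in place of $^{-1}$: it is equivalent to the statement that $v\mapsto v'\Upsilon_t^{(i)}(T)v+2(M_t^{(i)}(T)\xi)'v$ is minimized at $v=-[\Upsilon_t^{(i)}(T)]^{\dagger}M_t^{(i)}(T)\xi$ with minimum value $-(M_t^{(i)}(T)\xi)'[\Upsilon_t^{(i)}(T)]^{\dagger}M_t^{(i)}(T)\xi$, so that no leftover linear term survives the completion of squares.

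Rearranging then gives the cost identity
\[
J_T=E[x_0'P_0(T)x_0]+(Ex_0)'\bar P_0(T)Ex_0+E\!\int_0^T\!\big(\delta_t'\Upsilon_t^{(1)}(T)\delta_t+\bar\delta_t'\Upsilon_t^{(2)}(T)\bar\delta_t\big)\,dt ,
\]
valid for every $u\in\mathcal{U}[0,T]$. Evaluating it at the feedback \eqref{2-opti} (for which $\delta_t=\bar\delta_t=0$) shows $J_T(\text{feedback})=E[x_0'P_0(T)x_0]+(Ex_0)'\bar P_0(T)Ex_0$; since Assumption \ref{ass:ass2} makes $L_t\ge0$ (via the same mean/fluctuation split), we get $J_T\ge0$ for all $u$, hence $P_t(T)\ge0$ for all $t\in[0,T]$, exactly as in the proof of Theorem \ref{thm:main}. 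Consequently $\Upsilon_t^{(1)}(T)=R+D'P_t(T)D\ge0$ and $\Upsilon_t^{(2)}(T)=R+\bar R+(D+\bar D)'P_t(T)(D+\bar D)\ge0$, so the integrand in the identity is nonnegative, $J_T\ge E[x_0'P_0(T)x_0]+(Ex_0)'\bar P_0(T)Ex_0$, and equality is achieved precisely by \eqref{2-opti}--\eqref{2-k2}, which is admissible because its gains are continuous on $[0,T]$ and the resulting closed--loop mean--field SDE has a square--integrable solution. This yields both the minimizer \eqref{2-opti} and the optimal value \eqref{16}. I expect the only real work to be the bookkeeping: carrying the mean/fluctuation separation through so that the two distinct pseudo--inverse squares $\Upsilon_t^{(1)}(T)$ and $\Upsilon_t^{(2)}(T)$ emerge cleanly, and verifying carefully that \eqref{refg} is exactly the condition under which the completion of squares closes with the Moore--Penrose inverse.
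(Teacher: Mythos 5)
Your proposal is correct and follows essentially the route the paper intends: the Lemma is stated without a separate proof precisely because it is the completion-of-squares identity from the proof of Theorem \ref{thm:main} (cf.\ the identity culminating in the expression for $J_T$ with the $\dagger$-gains there), with the regular condition \eqref{refg} playing the role of $\Upsilon\Upsilon^{-1}=I$. Your one genuine addition --- deducing $P_t(T)\ge 0$, hence $\Upsilon_t^{(i)}(T)\ge 0$, by first evaluating the identity at the feedback (which needs no minimality) and then using time-invariance --- is exactly the argument the paper defers to Appendix C, and you have correctly placed it so that no circularity arises in concluding that the feedback is a minimizer.
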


\begin{lemma}\label{lem3}
Under Assumption \ref{ass:ass2}, the solution to Riccati equation \eqref{2-Ric1}-\eqref{2-Ric2} satisfy $P_t(T)\geq 0$ and $P_t(T)+\bar{P}_t(T)\geq 0$ for $t\in[0,T]$.
\end{lemma}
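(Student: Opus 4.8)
The plan is to read off both semi-definiteness statements from the optimal-cost formula of Lemma~\ref{lem:lemma3}, exploiting the freedom to prescribe the initial random vector. Under Assumption~\ref{ass:ass2} the running cost is non-negative for every admissible control: splitting $x_t=(x_t-Ex_t)+Ex_t$ and $u_t=(u_t-Eu_t)+Eu_t$, the cross terms vanish in expectation, so
\begin{align*}
 E\big[x_t'Qx_t+(Ex_t)'\bar Q Ex_t\big]&=E\big[(x_t-Ex_t)'Q(x_t-Ex_t)\big]+(Ex_t)'(Q+\bar Q)Ex_t\ge 0,
\end{align*}
since $Q\ge0$ and $Q+\bar Q\ge0$, and likewise $E[u_t'Ru_t+(Eu_t)'\bar R Eu_t]\ge0$ from $R\ge0$, $R+\bar R\ge0$. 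Hence the cost \eqref{ps2} with $P_T(T)=\bar P_T(T)=0$ is non-negative, so is its minimum, and by Lemma~\ref{lem:lemma3} that minimum equals $E[x_0'P_0(T)x_0]+(Ex_0)'\bar P_0(T)Ex_0$; decomposing $x_0$ in the same way gives
\begin{align*}
 E\big[(x_0-Ex_0)'P_0(T)(x_0-Ex_0)\big]+(Ex_0)'\big(P_0(T)+\bar P_0(T)\big)Ex_0\ge 0
\end{align*}
for every square-integrable initial random vector $x_0$.

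Now I would specialize $x_0$ twice. Choosing $x_0=\xi$ deterministic kills the fluctuation term and yields $\xi'(P_0(T)+\bar P_0(T))\xi\ge0$ for all $\xi\in\mathcal R^n$, i.e.\ $P_0(T)+\bar P_0(T)\ge0$; choosing instead $x_0=\eta\xi$ with $\eta$ a $\pm1$-valued mean-zero variable independent of $W$ kills the mean term and yields $\xi'P_0(T)\xi\ge0$ for all $\xi\in\mathcal R^n$, i.e.\ $P_0(T)\ge0$. Finally, since the coefficients in \eqref{2-Ric1}--\eqref{2-Ric2} are time-invariant, the backward solution depends only on the time-to-go, $P_t(T)=P_0(T-t)$ and $\bar P_t(T)=\bar P_0(T-t)$, so applying the above with horizon $T-t$ gives $P_t(T)\ge0$ and $P_t(T)+\bar P_t(T)\ge0$ for all $t\in[0,T]$. (Equivalently, one restarts the LQ problem at time $t$ with an $\mathcal F_t$-measurable initial state and invokes the time-shifted version of Lemma~\ref{lem:lemma3}.)

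The argument is elementary; the only two points needing attention are the mean/fluctuation splitting, which is precisely what turns the semi-definiteness of the four weighting matrices into the two separate conclusions, and the legitimacy of freely prescribing the initial random vector, in particular a mean-zero one, which is exactly what separates the bound on $P_0(T)$ from that on $P_0(T)+\bar P_0(T)$. There is no genuine analytic difficulty here; in particular, no convergence or contraction argument is required, because Lemma~\ref{lem:lemma3} already supplies the exact value of the optimal cost.
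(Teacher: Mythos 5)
Your proposal is correct and follows essentially the same route as the paper's Appendix C: non-negativity of the optimal cost from Lemma \ref{lem:lemma3} under Assumption \ref{ass:ass2}, specialization of the initial state to a mean-zero random vector (for $P_0(T)\geq 0$) and to a deterministic vector (for $P_0(T)+\bar{P}_0(T)\geq 0$), and then time-invariance $P_t(T)=P_0(T-t)$, $\bar{P}_t(T)=\bar{P}_0(T-t)$ to cover all $t\in[0,T]$. The only difference is that you spell out the mean/fluctuation splitting of the running cost and give an explicit mean-zero construction $x_0=\eta\xi$, which the paper leaves implicit.
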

\begin{proof}
  See Appendix C.
\end{proof}

%
%

In this section, we introduce the following coupled ARE:
\begin{align}
  0&=Q+PA+A'P+C'PC-[M^{(1)}]'[\Upsilon^{(1)}]^{\dag}M^{(1)},\label{are1}\\
  0&=\bar{Q}+P\bar{A}+\bar{A}'P+(A+\bar{A})'\bar{P}+\bar{P}(A+\bar{A})\notag\\
  &+C'P\bar{C}+\bar{C}'PC+\bar{C}'P\bar{C}\notag\\
  &+[M^{(1)}]'[\Upsilon^{(1)}]^{\dag}M^{(1)}-[M^{(2)}]'[\Upsilon^{(2)}]^{\dag}M^{(2)},\label{are2}
\end{align}
with the following regular condition holds:
\begin{align}\label{rgh}
 \Upsilon^{(i)}[\Upsilon^{(i)}]^{\dag}M^{(i)}=M^{(i)}, i=1,2,
\end{align}
where $\Upsilon^{(1)}$, $M^{(1)}$, $\Upsilon^{(2)}$, $M^{(2)}$ are given by
\begin{align}
\Upsilon^{(1)}&=R+D'PD,\label{up1}\\
M^{(1)}&=B'P+D'PC,\label{hh1}\\
\Upsilon^{(2)}&=R+\bar{R}+(D+\bar{D})'P(D+\bar{D}),\label{up2}\\
M^{(2)}&=(B+\bar{B})'(P+\bar{P})+(D+\bar{D})'P(C+\bar{C}).\label{hh2}
\end{align}

In view of \eqref{up1}-\eqref{hh2}, there holds
\begin{align}
[M^{(1)}]'[\Upsilon^{(1)}]^{\dag}M^{(1)}&\hspace{-1mm}=\hspace{-1mm}-[M^{(1)}]'\mathcal{K}\hspace{-1mm}-\hspace{-1mm}
\mathcal{K}'M^{(1)}\hspace{-1mm}-\hspace{-1mm}\mathcal{K}'\Upsilon^{(1)}\mathcal{K},\label{equ1}\\
[M^{(2)}]'[\Upsilon^{(2)}]^{\dag}M^{(2)}&=-[M^{(2)}]'(\mathcal{K}+\bar{\mathcal{K}})-(\mathcal{K}+\bar{\mathcal{K}})'M^{(2)}\notag\\
&-(\mathcal{K}+\bar{\mathcal{K}})'\Upsilon^{(2)}(\mathcal{K}+\bar{\mathcal{K}}),\label{equ2}
\end{align}
where $\mathcal{K}$ and $\bar{\mathcal{K}}$ satisfy
\begin{align}
\mathcal{K}&=-[\Upsilon^{(1)}]^{\dag}M^{(1)},\label{k1}\\
\bar{\mathcal{K}}&=-\{[\Upsilon^{(2)}]^{\dag}M^{(2)}-[\Upsilon^{(1)}]^{\dag}M^{(1)}\}.\label{k2}
\end{align}

Therefore, for the sake of convenience, we rewrite the coupled ARE \eqref{are1}-\eqref{are2} as follows:
\begin{align}
  0&=\mathbf{Q}+\mathbf{A}'P+P\mathbf{A}+\mathbf{C}'P\mathbf{C},\label{4-ly1}\\
  0&=\bar{\mathbf{Q}}+\bar{\mathbf{A}}'(P+\bar{P})+(P+\bar{P})\bar{\mathbf{A}}+\bar{\mathbf{C}}'P\bar{\mathbf{C}},\label{4-ly2}
\end{align}
where
\begin{align}\label{symlk}
 &\mathbf{Q}=Q+\mathcal{K}'R\mathcal{K}\geq 0, \mathbf{A}=A+B\mathcal{K}, \mathbf{C}=C+D\mathcal{K},\notag\\
 &\bar{\mathbf{Q}}=Q+\bar{Q}+(\mathcal{K}+\bar{\mathcal{K}})'(R+\bar{R})(\mathcal{K}+\bar{\mathcal{K}})\geq 0,\notag\\
 &\bar{\mathbf{A}}=A+\bar{A}+(B+\bar{B})(\mathcal{K}+\bar{\mathcal{K}}), \notag\\ &\bar{\mathbf{C}}=C+\bar{C}+(D+\bar{D})(\mathcal{K}+\bar{\mathcal{K}}).
\end{align}

In what follows, the definition concerning the solution to \eqref{are1}-\eqref{are2} is introduced as below:
\begin{definition}\label{defas}
If the coupled ARE \eqref{are1}-\eqref{are2} has solution $P$, $\bar{P}$ satisfying $P\geq 0$ and $P+\bar{P}\geq 0$ ($P>0$ and $P+\bar{P}>0$), we call the coupled ARE \eqref{are1}-\eqref{are2} has a positive semi-definite (positive definite) solution.
\end{definition}

Before stating the main results of this paper, the following Lemma will be introduced at first, which is essential in exploring the stabilizing conditions of mean-field systems.
\begin{lemma}\label{ess}Under Assumptions \ref{ass:ass2} and \ref{ass:ass4}, the following two assertions hold:

 1) The following system $(\tilde{\mathbb{A}}, \tilde{\mathbb{C}}, \tilde{\mathcal{Q}}^{1/2})$ is exact detectable:
 \begin{equation}\label{mf01}
 \left\{ \begin{array}{ll}
 d\mathbb{X}_{t}=\tilde{\mathbb{A}}\mathbb{X}_{t}dt+\tilde{\mathbb{C}}\mathbb{X}_{t}dW_{t},~~\mathbb{X}_{0},\\
\tilde{\mathcal{Y}}_{t}=\tilde{\mathcal{Q}}^{1/2}\mathbb{X}_{t},
\end{array} \right.
\end{equation}
where $\tilde{\mathbb{A}}\hspace{-1mm}=\hspace{-1mm}\left[\hspace{-2mm}
  \begin{array}{cc}
   \mathbf{A}& 0\\
   0        & \mathbf{\bar{A}}    \\
  \end{array}
\hspace{-2mm}\right]$, $\tilde{\mathbb{C}}\hspace{-1mm}=\hspace{-1mm}\left[\hspace{-2mm}
  \begin{array}{cc}
   \mathbf{C}& \mathbf{\bar{C}}\\
   0        & 0\\
  \end{array}
\hspace{-2mm}\right]$, $\tilde{\mathcal{Q}}\hspace{-1mm}=\hspace{-1mm}\left[\hspace{-1mm}
  \begin{array}{cc}
   \mathbf{Q}& 0\\
   0        & \bar{\mathbf{Q}}     \\
  \end{array}
\hspace{-1mm}\right]\geq  0$, and $\mathbb{X}_{t}=\left[\hspace{-1mm}
  \begin{array}{cc}
   \hspace{-1mm} x_{t}-Ex_{t}\hspace{-1mm}\\
   \hspace{-1mm} Ex_{t}     \hspace{-1mm}           \\
  \end{array}
\hspace{-1mm}\right], t\in[0,T]$. i.e., for any $T\geq 0$,
$\tilde{\mathcal{Y}}_{t}= 0,~a.s.,  ~\forall~ t\in[0,T]\Rightarrow\lim_{t\rightarrow+\infty}E(\mathbb{X}_{t}'\mathbb{X}_{t})=0.$

 2) If $\mathbb{P}\geq 0$, then $\mathbb{X}_0$ is an unobservable state of system $(\tilde{\mathbb{A}}, \tilde{\mathbb{C}}, \tilde{\mathcal{Q}}^{1/2})$ if and only if $E(\mathbb{X}_0'\mathbb{P}\mathbb{X}_0)=0$, where $\mathbb{P}=\left[\hspace{-1mm}
  \begin{array}{cc}
   P& 0\\
   0        & P+\bar{P}      \\
  \end{array}
\hspace{-1mm}\right]$ and $P,\bar{P}$ satisfy \eqref{are1}-\eqref{are2}.
\end{lemma}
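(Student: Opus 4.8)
The plan is to establish both assertions by relating the stochastic dynamics of $\mathbb{X}_t$ in \eqref{mf01} to the original mean-field system and exploiting the structure of the rewritten ARE \eqref{4-ly1}-\eqref{4-ly2}. First I would observe that $\mathbb{X}_t=\bigl[(x_t-Ex_t)',(Ex_t)'\bigr]'$ with $x_t$ governed by the closed-loop system under the controller $u_t=\mathcal{K}x_t+\bar{\mathcal{K}}Ex_t$, i.e., $dx_t=(\mathbf{A}x_t+\ldots)dt+(\mathbf{C}x_t+\ldots)dW_t$, decomposes exactly: subtracting its expectation, $x_t-Ex_t$ satisfies $d(x_t-Ex_t)=\mathbf{A}(x_t-Ex_t)dt+[\mathbf{C}(x_t-Ex_t)+\bar{\mathbf{C}}Ex_t]dW_t$, while $Ex_t$ satisfies the deterministic ODE $dEx_t=\bar{\mathbf{A}}Ex_tdt$. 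This is precisely the block form $d\mathbb{X}_t=\tilde{\mathbb{A}}\mathbb{X}_tdt+\tilde{\mathbb{C}}\mathbb{X}_tdW_t$, and moreover the output $\tilde{\mathcal{Y}}_t=\tilde{\mathcal{Q}}^{1/2}\mathbb{X}_t$ carries the information $\mathbf{Q}^{1/2}(x_t-Ex_t)$ and $\bar{\mathbf{Q}}^{1/2}Ex_t$. So $\tilde{\mathcal{Y}}_t\equiv 0$ a.s.\ on $[0,T]$ forces $\mathcal{Q}^{1/2}\mathbb{X}_t=0$ (since $\mathbf{Q}\geq Q$ and $\bar{\mathbf{Q}}\geq Q+\bar{Q}$ in the natural ordering, $\tilde{\mathcal{Y}}_t=0$ implies $\mathcal{Q}^{1/2}\mathbb{X}_t=0$, i.e., $\mathcal{Y}_t=0$ in the sense of Definition~\ref{def:4}), which by Assumption~\ref{ass:ass4} (exact detectability of $(A,\bar A,C,\bar C,\mathcal{Q}^{1/2})$) yields $\lim_{t\to\infty}E(x_t'x_t)=0$. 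Since $E(\mathbb{X}_t'\mathbb{X}_t)=E(|x_t-Ex_t|^2)+|Ex_t|^2\le 2E(x_t'x_t)$ (actually $E(\mathbb{X}_t'\mathbb{X}_t)=E(x_t'x_t)$), this gives $\lim_{t\to\infty}E(\mathbb{X}_t'\mathbb{X}_t)=0$, proving assertion~1). The subtle point I must check carefully is that the state $x_t$ appearing in the detectability hypothesis Assumption~\ref{ass:ass4} refers to the uncontrolled dynamics $dx_t=(Ax_t+\bar A Ex_t)dt+(Cx_t+\bar C Ex_t)dW_t$, whereas here $x_t$ evolves under $(\mathbf{A},\bar{\mathbf{A}},\mathbf{C},\bar{\mathbf{C}})$ and weight $\tilde{\mathcal{Q}}$; one must note that $\mathbf{Q}=Q+\mathcal{K}'R\mathcal{K}$, so $\tilde{\mathcal{Y}}_t=0$ gives both $\mathcal{Q}^{1/2}\mathbb{X}_t=0$ and $R^{1/2}\mathcal{K}(x_t-Ex_t)=0$, $(R+\bar R)^{1/2}(\mathcal{K}+\bar{\mathcal{K}})Ex_t=0$, hence $u_t=\mathcal{K}x_t+\bar{\mathcal{K}}Ex_t$ contributes nothing to the drift/diffusion when we also have $\mathcal{Q}^{1/2}\mathbb{X}_t=0$ — this lets one pass from the closed-loop trajectory back to a trajectory of the open-loop system with the same output, so Assumption~\ref{ass:ass4} applies directly.

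For assertion~2), I would work from the rewritten ARE in Lyapunov form. Given $\mathbb{P}\ge 0$ and $\mathbb{X}_0$ with $E(\mathbb{X}_0'\mathbb{P}\mathbb{X}_0)=0$, I apply the It\^o formula to $E(\mathbb{X}_t'\mathbb{P}\mathbb{X}_t)$ along the flow \eqref{mf01}: using \eqref{4-ly1}-\eqref{4-ly2} (which say exactly that $\tilde{\mathbb{A}}'\mathbb{P}+\mathbb{P}\tilde{\mathbb{A}}+\tilde{\mathbb{C}}'\mathbb{P}\tilde{\mathbb{C}}=-\tilde{\mathcal{Q}}$ — here I would verify the block identity, recalling that $P\bar{\mathbf{C}}$-type cross terms are absorbed correctly because $\mathbb{P}$ is block-diagonal with blocks $P$ and $P+\bar P$ and the lower-right block of $\tilde{\mathbb{C}}$ is zero), one gets
\begin{equation*}
\frac{d}{dt}E(\mathbb{X}_t'\mathbb{P}\mathbb{X}_t)=-E(\mathbb{X}_t'\tilde{\mathcal{Q}}\mathbb{X}_t)=-E(\tilde{\mathcal{Y}}_t'\tilde{\mathcal{Y}}_t)\le 0.
\end{equation*}
Integrating from $0$ to $T$ and using $0\le E(\mathbb{X}_T'\mathbb{P}\mathbb{X}_T)\le E(\mathbb{X}_0'\mathbb{P}\mathbb{X}_0)=0$ forces $E(\mathbb{X}_T'\mathbb{P}\mathbb{X}_T)=0$ for all $T$ and $\int_0^T E(\tilde{\mathcal{Y}}_t'\tilde{\mathcal{Y}}_t)dt=0$, hence $\tilde{\mathcal{Y}}_t=0$ a.s.\ for all $t\ge 0$, so $\mathbb{X}_0$ is unobservable for $(\tilde{\mathbb{A}},\tilde{\mathbb{C}},\tilde{\mathcal{Q}}^{1/2})$. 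Conversely, if $\mathbb{X}_0$ is unobservable, then $\tilde{\mathcal{Y}}_t=0$ a.s.\ on every $[0,T]$, and integrating the same identity gives $E(\mathbb{X}_T'\mathbb{P}\mathbb{X}_T)=E(\mathbb{X}_0'\mathbb{P}\mathbb{X}_0)$ for all $T$; but by assertion~1) $\lim_{T\to\infty}E(\mathbb{X}_T'\mathbb{X}_T)=0$ and $\mathbb{P}\ge 0$ implies $0\le E(\mathbb{X}_T'\mathbb{P}\mathbb{X}_T)\le \|\mathbb{P}\|E(\mathbb{X}_T'\mathbb{X}_T)\to 0$, so $E(\mathbb{X}_0'\mathbb{P}\mathbb{X}_0)=0$. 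This closes the equivalence.

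The main obstacle I anticipate is purely bookkeeping rather than conceptual: verifying rigorously that the block decomposition of the closed-loop mean-field SDE matches \eqref{mf01} term-by-term — in particular that the diffusion coefficient of $x_t-Ex_t$ is $\mathbf{C}(x_t-Ex_t)+\bar{\mathbf{C}}Ex_t$ and not something else — and that \eqref{4-ly1}-\eqref{4-ly2} really is the Lyapunov identity $\tilde{\mathbb{A}}'\mathbb{P}+\mathbb{P}\tilde{\mathbb{A}}+\tilde{\mathbb{C}}'\mathbb{P}\tilde{\mathbb{C}}+\tilde{\mathcal{Q}}=0$ for the block matrices as defined, since the $(1,1)$ block involves only $P,\mathbf{A},\mathbf{C}$ while the $(2,2)$ block involves $P+\bar P,\bar{\mathbf{A}}$ but the diffusion contribution $\bar{\mathbf{C}}'P\bar{\mathbf{C}}$ uses $P$ (the $(1,1)$ block of $\mathbb{P}$, consistent with $\tilde{\mathbb{C}}$ mapping the second block into the first). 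A secondary delicate point is the passage in assertion~1) from the closed-loop output-zero trajectory to applicability of the open-loop exact-detectability Assumption~\ref{ass:ass4}; I would handle this by showing that on the event $\tilde{\mathcal{Y}}_t=0$ the controller terms vanish in the SDE (because $R^{1/2}\mathcal{K}(x_t-Ex_t)=0$ etc., combined with $\mathcal{Q}^{1/2}\mathbb{X}_t=0$), reducing the closed-loop flow to an open-loop flow with output $\mathcal{Y}_t=0$, to which Assumption~\ref{ass:ass4} directly applies — though one should double-check whether $R^{1/2}\mathcal{K}v=0$ together with the ARE actually forces $(B\mathcal{K})$-type drift terms to vanish, which may instead require an argument through the regularity condition \eqref{rgh} rather than through the output alone. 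This is the one place where I would slow down and be careful.
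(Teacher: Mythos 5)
Your treatment of assertion 2) follows the paper's own route almost exactly: apply It\^{o}'s formula to $E(\mathbb{X}_t'\mathbb{P}\mathbb{X}_t)$, obtain the dissipation identity $\frac{d}{dt}E(\mathbb{X}_t'\mathbb{P}\mathbb{X}_t)=-E(\mathbb{X}_t'\tilde{\mathcal{Q}}\mathbb{X}_t)$, and argue the two directions as you do, with the converse direction resting on assertion 1). One caveat: the matrix identity $\tilde{\mathbb{A}}'\mathbb{P}+\mathbb{P}\tilde{\mathbb{A}}+\tilde{\mathbb{C}}'\mathbb{P}\tilde{\mathbb{C}}+\tilde{\mathcal{Q}}=0$ that you propose to ``verify'' is false in general --- its off-diagonal block equals $\mathbf{C}'P\bar{\mathbf{C}}$, which does not vanish. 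The correct statement is either the split form $0=\tilde{\mathcal{Q}}+\tilde{\mathbb{A}}'\mathbb{P}+\mathbb{P}\tilde{\mathbb{A}}+\{\tilde{\mathbb{C}}^{(1)}\}'\mathbb{P}\tilde{\mathbb{C}}^{(1)}+\{\tilde{\mathbb{C}}^{(2)}\}'\mathbb{P}\tilde{\mathbb{C}}^{(2)}$ used in \eqref{ly4}, or the scalar identity after taking expectations, where the cross terms die because $E(x_t-Ex_t)=0$. Your conclusion survives, but the reason you give for the cross terms being ``absorbed'' is not the right one.

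The genuine gap is in assertion 1), and it is exactly the point you flagged at the end but did not close. From $\tilde{\mathcal{Y}}_t\equiv 0$ you correctly extract $\mathcal{Q}^{1/2}\mathbb{X}_t=0$ and $\mathbb{R}^{1/2}\mathbb{K}\mathbb{X}_t=0$; however, under Assumption \ref{ass:ass2} the matrices $R$ and $R+\bar{R}$ are only positive semi-definite, so $R^{1/2}\mathcal{K}(x_t-Ex_t)=0$ does not imply $\mathcal{K}(x_t-Ex_t)=0$, and the feedback contributions $B\mathcal{K}(x_t-Ex_t)$, $D\mathcal{K}(x_t-Ex_t)$ (and their mean-field counterparts) need not vanish. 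Consequently the closed-loop zero-output trajectory is not a trajectory of the open-loop system, and Assumption \ref{ass:ass4} cannot be ``applied directly'' as your reduction requires. This is not bookkeeping; it is the crux of assertion 1) precisely because of the weakened hypothesis $R\geq 0$ that the paper emphasizes. The paper does not attempt your reduction: it writes the closed loop as $\tilde{\mathbb{A}}=\mathbb{A}+\mathbb{B}\mathbb{K}$, $\tilde{\mathbb{C}}=\mathbb{C}+\mathbb{D}\mathbb{K}$, $\tilde{\mathcal{Q}}=\mathcal{Q}+\mathbb{K}'\mathbb{R}\mathbb{K}$ for the augmented $2n$-dimensional system and invokes the feedback-invariance of exact detectability from \emph{Theorem 4} and \emph{Proposition 1} of \cite{zhangw}. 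To complete your proof you would need either to cite such an invariance result or to supply an argument valid for singular $R$; the route through ``the controller terms vanish in the SDE'' does not go through.
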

\begin{proof}
  See Appendix D.
\end{proof}

\begin{remark}\label{rem:5}
  Following from Lemma \ref{ess} and its proof, it can be verified that the exact observability of system \eqref{mf01} $(\tilde{\mathbb{A}}, \tilde{\mathbb{C}}, \tilde{\mathcal{Q}}^{1/2})$ can be implied from the exact observability of $(A,\bar{A},C,\bar{C},\mathcal{Q}^{1/2})$ given in Assumption \ref{ass:ass3}.
\end{remark}

The main results of this section will be presented, which can be stated as the following two theorems. One is under the assumption of exact detectability assumption (Assumption \ref{ass:ass4}), the other is under the exact observability (Assumption \ref{ass:ass3}) which is stronger than exact detectability.
\begin{theorem}\label{thm:succeed}
Consider mean-field system \eqref{ps1} and cost function \eqref{ps200}, under Assumptions \ref{ass:ass2} and \ref{ass:ass4} (exact detectability), mean-field system \eqref{ps1} is mean square stabilizable if and only if the coupled ARE \eqref{are1}-\eqref{are2} admits a unique positive semi-definite solution.

In this case, the stabilizing controller is given by
\begin{align}\label{control}
  u_{t}=\mathcal{K}x_{t}+\bar{\mathcal{K}}Ex_{t},
\end{align}
where $\mathcal{K}, \bar{\mathcal{K}}$ are as in \eqref{k1}-\eqref{k2}.

Furthermore, the stabilizing controller \eqref{control} minimizes the cost function \eqref{ps200}, and the optimal cost function is given as
\begin{equation}\label{cost}
  J^{*}=E(x_{0}'Px_{0})+Ex_{0}'\bar{P}Ex_{0}.
\end{equation}
\end{theorem}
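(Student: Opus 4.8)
## Proof Proposal for Theorem~\ref{thm:succeed}

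\textbf{Overall strategy.} The plan is to prove the two implications separately, using the finite-horizon machinery of Lemma~\ref{lem:lemma3} and Lemma~\ref{lem3} as the bridge between finite and infinite horizon, and using Lemma~\ref{ess} (exact detectability of the lifted system $(\tilde{\mathbb{A}},\tilde{\mathbb{C}},\tilde{\mathcal{Q}}^{1/2})$) to convert ``zero cost'' into ``zero state in the mean square limit.'' I would first establish \emph{sufficiency}: assuming the coupled ARE \eqref{are1}-\eqref{are2} has a positive semi-definite solution $(P,\bar P)$, show that the feedback \eqref{control} stabilizes \eqref{ps1} and is optimal with cost \eqref{cost}. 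Then I would establish \emph{necessity}: assuming \eqref{ps1} is mean square stabilizable, show the ARE admits a (unique) positive semi-definite solution, obtained as the limit of the finite-horizon Riccati solutions $P_t(T), \bar P_t(T)$ as $T\to\infty$.

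\textbf{Sufficiency.} Given $(P,\bar P)\ge 0$ (with $P+\bar P\ge0$) solving \eqref{are1}-\eqref{are2} under the regularity condition \eqref{rgh}, apply the closed-loop feedback $u_t=\mathcal{K}x_t+\bar{\mathcal{K}}Ex_t$. Writing the dynamics in the lifted variable $\mathbb{X}_t=[\,(x_t-Ex_t)',(Ex_t)'\,]'$, the closed loop becomes $d\mathbb{X}_t=\tilde{\mathbb{A}}\mathbb{X}_t\,dt+\tilde{\mathbb{C}}\mathbb{X}_t\,dW_t$ and, using the rewritten ARE in Lyapunov form \eqref{4-ly1}-\eqref{4-ly2}, the matrix $\mathbb{P}=\mathrm{diag}(P,P+\bar P)\ge0$ satisfies $\tilde{\mathbb{A}}'\mathbb{P}+\mathbb{P}\tilde{\mathbb{A}}+\tilde{\mathbb{C}}'\mathbb{P}\tilde{\mathbb{C}}=-\tilde{\mathcal{Q}}\le0$. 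By It\^o's formula, $\frac{d}{dt}E(\mathbb{X}_t'\mathbb{P}\mathbb{X}_t)=-E(\mathbb{X}_t'\tilde{\mathcal{Q}}\mathbb{X}_t)=-E(\tilde{\mathcal{Y}}_t'\tilde{\mathcal{Y}}_t)\le 0$, so $E(\mathbb{X}_t'\mathbb{P}\mathbb{X}_t)$ is nonincreasing and bounded below, hence convergent, and $\int_0^\infty E(\tilde{\mathcal{Y}}_t'\tilde{\mathcal{Y}}_t)\,dt<\infty$. A standard argument (using that $E(\tilde{\mathcal{Y}}_t'\tilde{\mathcal{Y}}_t)$ has bounded derivative along trajectories, or a Barbalat-type lemma) gives $E(\tilde{\mathcal{Y}}_t'\tilde{\mathcal{Y}}_t)\to 0$; then exact detectability from Lemma~\ref{ess}(1) yields $E(\mathbb{X}_t'\mathbb{X}_t)\to0$, i.e. $E(x_t'x_t)\to0$. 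This proves stabilizability and exhibits the controller. Optimality and the cost formula \eqref{cost} then follow by the usual completion-of-squares identity: for any admissible stabilizing $u$, $J=E(x_0'Px_0)+Ex_0'\bar P Ex_0 + E\int_0^\infty[(u_t-\mathcal{K}x_t-\bar{\mathcal{K}}Ex_t)'\Upsilon^{(1)}(\cdots)+\cdots]\,dt\ge E(x_0'Px_0)+Ex_0'\bar P Ex_0$, using \eqref{equ1}-\eqref{equ2} and $\Upsilon^{(i)}\ge0$, with equality for \eqref{control}; one must check the boundary term $E(x_T'Px_T)\to0$ along stabilizing controls, which is where mean square stabilizability of the trajectory is used.

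\textbf{Necessity and uniqueness.} Suppose \eqref{ps1} is mean square stabilizable. Take any stabilizing control; its infinite-horizon cost is finite (using $Q,Q+\bar Q\ge0$ and $R,R+\bar R\ge0$ together with $E(x_t'x_t)\to0$ plus, as needed, integrability of the control energy), so the value function $J_T^*=E[x_0'P_0(T)x_0]+Ex_0'\bar P_0(T)Ex_0$ of the finite-horizon problems (Lemma~\ref{lem:lemma3}, Lemma~\ref{lem3}) is monotone nondecreasing in $T$ and uniformly bounded above; since $P_0(T)\ge0$ and $P_0(T)+\bar P_0(T)\ge0$ and $x_0$ is arbitrary, $P_0(T)$ and $P_0(T)+\bar P_0(T)$ converge as $T\to\infty$ to limits $P\ge0$, $P+\bar P\ge0$. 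A time-shift/stationarity argument (the finite-horizon Riccati solution on $[0,T]$ equals the one on $[\tau,T+\tau]$ shifted, since coefficients are time-invariant) shows $P_t(T)=P_0(T-t)$, so the whole trajectory $P_t(T)\to P$, $\bar P_t(T)\to\bar P$, and passing to the limit in \eqref{2-Ric1}-\eqref{2-Ric2} (with the regularity conditions \eqref{refg} passing to \eqref{rgh}) yields that $(P,\bar P)$ solves the coupled ARE \eqref{are1}-\eqref{are2}. For uniqueness: if $(P_1,\bar P_1)$ and $(P_2,\bar P_2)$ are both positive semi-definite solutions, subtract the Lyapunov-form equations \eqref{4-ly1}-\eqref{4-ly2} for the two, and apply Lemma~\ref{ess}(2): the difference satisfies a Lyapunov equation whose ``observability'' is governed by the detectable/observable pair, forcing $E(\mathbb{X}_0'(\mathbb{P}_1-\mathbb{P}_2)\mathbb{X}_0)=0$ for all $\mathbb{X}_0$ via the characterization of unobservable states, hence $P_1=P_2$, $\bar P_1=\bar P_2$. (One standard subtlety: the two solutions generate possibly different closed loops $\mathbf{A}_i,\bar{\mathbf{A}}_i$; I would handle this by the usual cross-subtraction trick, writing the difference equation so that it becomes a Lyapunov equation driven by one closed loop and bounded by detectability.)

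\textbf{Main obstacle.} I expect the technical heart to be the \emph{necessity direction}: specifically, (i) showing the finite-horizon value functions are uniformly bounded in $T$ --- this requires constructing an admissible control on $[0,\infty)$ with finite cost from the hypothesized stabilizer, and care is needed because mean square stabilizability as defined only gives $E(x_t'x_t)\to0$, not a priori $\int_0^\infty E(x_t'x_t)\,dt<\infty$, so one may need exact detectability again or an auxiliary estimate to control the state-cost integral; and (ii) the uniqueness argument, where the coupling between the $P$-equation and the $\bar P$-equation and the differing feedback gains must be reconciled through Lemma~\ref{ess}(2). The passage to the limit in the Riccati equations (continuity of the Moore--Penrose inverse under the regularity condition, which may fail without \eqref{rgh}) is the other delicate point and is presumably why the regularity conditions \eqref{refg}, \eqref{rgh} were imposed.
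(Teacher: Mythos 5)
Your necessity argument tracks the paper's Appendix E closely (monotonicity of $P_t(T)$ and $P_t(T)+\bar P_t(T)$ in $T$, uniform boundedness via the cost of a stabilizing feedback, passage to the limit in \eqref{2-Ric1}--\eqref{2-Ric2}); the paper settles the integrability worry you raise by taking the stabilizer in linear feedback form and invoking the argument of Lemma 4.1 of \cite{rami2} to obtain $E\int_0^\infty x_t'x_t\,dt<c\,E(x_0'x_0)$, and its uniqueness proof is simpler than your Lyapunov-subtraction sketch: any two positive semi-definite solutions both represent the optimal cost \eqref{cost}, and testing with zero-mean and then with deterministic initial states forces $P=S$ and $P+\bar P=S+\bar S$, with no cross-subtraction needed.

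The genuine gap is in sufficiency. Your chain ``$\int_0^\infty E(\tilde{\mathcal{Y}}_t'\tilde{\mathcal{Y}}_t)\,dt<\infty\Rightarrow E(\tilde{\mathcal{Y}}_t'\tilde{\mathcal{Y}}_t)\to0\Rightarrow E(\mathbb{X}_t'\mathbb{X}_t)\to0$ by exact detectability'' does not work with the paper's Definition \ref{def:det1}: exact detectability is a statement only about trajectories whose output is \emph{identically} zero on $[0,T]$, and it licenses no conclusion for trajectories whose output merely decays or is square-integrable. Moreover, when $\mathbb{P}$ is singular the bounded, nonincreasing quantity $E(\mathbb{X}_t'\mathbb{P}\mathbb{X}_t)$ does not bound $E(\mathbb{X}_t'\mathbb{X}_t)$, so the Barbalat-type step (which needs a bounded derivative, hence a bounded state) is also unjustified. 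The paper's proof is correspondingly much heavier: when $\mathbb{P}>0$ it shows the closed loop is exactly observable (Lemma \ref{ess}), proves the finite-horizon Gramians $H_0(T)$ and $H_0(T)+\bar H_0(T)$ are positive definite, and uses the time-shifted identity $\int_t^{t+T}E(\mathbb{X}_s'\tilde{\mathcal{Q}}\mathbb{X}_s)\,ds=V(t,x_t)-V(t+T,x_{t+T})\to0$ to force $E(x_t'x_t)\to0$; when $\mathbb{P}$ is merely positive semi-definite it performs an orthogonal decomposition $U'\mathbb{P}U=\mathrm{diag}(0,\mathbb{P}_2)$ with $\mathbb{P}_2>0$, shows the closed loop is block upper-triangular in these coordinates, stabilizes the $\mathbb{P}_2$-block by the previous observability argument, applies exact detectability \emph{literally} to the $\ker\mathbb{P}$ block (whose trajectories do have identically zero output), and finally glues the two blocks together with a cascade-stability estimate from \cite{abb}. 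You would need to supply this case analysis, or an equivalent treatment of the null space of $\mathbb{P}$, for the sufficiency direction to be complete.
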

\begin{proof}
See Appendix E.
\end{proof}

\begin{theorem}\label{thm:succeed2}
Under Assumptions \ref{ass:ass2} and \ref{ass:ass3} (exact observability), system \eqref{ps1} is mean square stabilizable if and only if the coupled ARE \eqref{are1}-\eqref{are2} has a unique positive definite solution. In this case, the stabilizing controller is given by \eqref{control}. Moreover, the controller \eqref{control} also  minimizes the cost function \eqref{ps200} and the optimal cost function is given by \eqref{cost}.
\end{theorem}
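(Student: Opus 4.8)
The plan is to bootstrap entirely from Theorem \ref{thm:succeed}. Since exact observability (Assumption \ref{ass:ass3}) implies exact detectability (Assumption \ref{ass:ass4}) — as noted in the remark following Assumption \ref{ass:ass3} — all conclusions of Theorem \ref{thm:succeed} are already available: mean-square stabilizability is equivalent to the existence of a unique positive semi-definite solution $(P,\bar P)$ of the coupled ARE \eqref{are1}-\eqref{are2}, the stabilizing controller is \eqref{control}, it minimizes \eqref{ps200}, and the optimal cost is \eqref{cost}. Hence the only genuinely new content is the upgrade from ``positive semi-definite'' to ``positive definite'': under exact observability the (unique) semi-definite solution must satisfy $P>0$ and $P+\bar P>0$, and conversely a positive definite solution suffices for stabilizability.

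First I would treat the necessity direction. Assume system \eqref{ps1} is mean-square stabilizable. By Theorem \ref{thm:succeed} the coupled ARE has a unique positive semi-definite solution $(P,\bar P)$; form $\mathbb P=\mathrm{diag}(P,\,P+\bar P)\geq 0$ as in Lemma \ref{ess}. By Remark \ref{rem:5}, exact observability of $(A,\bar A,C,\bar C,\mathcal Q^{1/2})$ forces the augmented system \eqref{mf01}, i.e. $(\tilde{\mathbb A},\tilde{\mathbb C},\tilde{\mathcal Q}^{1/2})$, to be exactly observable as well, so its only unobservable initial state is the zero vector. Now invoke part 2) of Lemma \ref{ess}: an initial state $\mathbb X_0$ is unobservable for $(\tilde{\mathbb A},\tilde{\mathbb C},\tilde{\mathcal Q}^{1/2})$ if and only if $E(\mathbb X_0'\mathbb P\mathbb X_0)=0$. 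Taking $\mathbb X_0$ to be an arbitrary deterministic vector, $\mathbb X_0'\mathbb P\mathbb X_0=0$ therefore implies $\mathbb X_0=0$, i.e. $\mathbb P>0$. Because $\mathbb P$ is block diagonal with blocks $P$ and $P+\bar P$, this yields $P>0$ and $P+\bar P>0$; thus the unique semi-definite solution is in fact positive definite, and it is the unique positive definite solution (any positive definite solution is in particular a positive semi-definite one, of which there is only one).

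For sufficiency, suppose the coupled ARE has a positive definite solution $(P,\bar P)$. One route is to note that it is a fortiori positive semi-definite and invoke Theorem \ref{thm:succeed}; a cleaner self-contained route is to run the Lyapunov argument directly on the closed-loop form \eqref{4-ly1}-\eqref{4-ly2}: with $\mathbb P=\mathrm{diag}(P,P+\bar P)>0$ and $\tilde{\mathcal Q}\geq 0$, the pair of Lyapunov equations satisfied by $(\mathbf A,\mathbf C)$ and $(\bar{\mathbf A},\bar{\mathbf C})$ together with $V(\mathbb X_t)=E(\mathbb X_t'\mathbb P\mathbb X_t)$ shows $V$ is non-increasing along the closed loop driven by controller \eqref{control}; exact detectability (inherited from exact observability via Lemma \ref{ess}) then upgrades non-increase to $\lim_{t\to\infty}E(x_t'x_t)=0$, i.e. mean-square stability. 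The optimality of \eqref{control} and the cost identity \eqref{cost} are then inherited verbatim from Theorem \ref{thm:succeed}, since the solution $(P,\bar P)$ is the same.

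The main obstacle I expect is precisely the necessity half, namely the implication ``$\mathbb P\geq 0$ together with full observability $\Rightarrow$ $\mathbb P>0$''; this is where the content of Lemma \ref{ess} part 2) (the identification of $\ker\mathbb P$ with the unobservable subspace of the decoupled augmented system) and of Remark \ref{rem:5} (propagation of exact observability from $(A,\bar A,C,\bar C,\mathcal Q^{1/2})$ to $(\tilde{\mathbb A},\tilde{\mathbb C},\tilde{\mathcal Q}^{1/2})$) is indispensable. A minor care point is the ``$E(\cdot'\,\cdot)=0$ a.s.'' convention for (non)observability in Definitions \ref{def:1} and \ref{def:4}, which is why positive definiteness of the constant matrix $\mathbb P$ should be extracted using deterministic test vectors $\mathbb X_0$; everything else reduces to an application of Theorem \ref{thm:succeed}.
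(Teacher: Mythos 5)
Your proposal is correct and follows essentially the same route as the paper's Appendix F: necessity is obtained by first invoking Theorem \ref{thm:succeed} to get the unique positive semi-definite solution and then using exact observability of the augmented system (Remark \ref{rem:5}) together with the identification of the kernel of $\mathbb{P}$ with the unobservable states (Lemma \ref{ess}, part 2, which the paper simply re-derives in-line as \eqref{5-lya03}) to force $\mathbb{P}>0$, while sufficiency is the observability-based Lyapunov argument of \eqref{a}--\eqref{a-6}. One small correction to your closing remark: since the first block of $\mathbb{X}_0$ is $x_0-Ex_0$, positive definiteness of $P$ must be extracted with zero-mean \emph{random} $x_0$ (deterministic test vectors only reach the $P+\bar{P}$ block), which is precisely why the paper phrases the contradiction in terms of $E(\mathbb{X}_0'\mathbb{P}\mathbb{X}_0)=0$ with $E(\mathbb{X}_0'\mathbb{X}_0)\neq 0$ rather than with deterministic $\mathbb{X}_0$.
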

\begin{proof}
See Appendix F.
\end{proof}

\begin{remark}
Theorems \ref{thm:succeed} and \ref{thm:succeed2} provide a thorough solution to stabilization and control problems for linear mean-field systems under basic assumptions. It is worth of pointing out that the stabilization results in Theorem \ref{thm:succeed} and \ref{thm:succeed2} are obtained under the condition of $R\geq 0$, which is a more relaxed condition than the standard assumption of $R>0$ in classical control \cite{fll}, \cite{zhangw} and \cite{zhw}.
\end{remark}

\section{Numerical Examples}\label{sec:4}
%
%

Consider system \eqref{ps1} and cost function \eqref{ps200} with
\begin{align*}
A&=0.2,\bar{A}=0.4,B=0.6,\bar{B}=0.2,C=0.1,\bar{C}=0.7,\\
D&=0.9,\bar{D}=0.3,Q=1,\bar{Q}=1,R=1,\bar{R}=1.
\end{align*}

Obviously, Assumptions \ref{ass:ass2} and \ref{ass:ass3} are satisfied, also the regular condition \eqref{refg} holds. By solving the coupled ARE \eqref{are1}-\eqref{are2}, we know that $P=18.4500$ and $\bar{P}=-1.6609$ is the unique solution satisfying $P=18.4500>0$ $P+\bar{P}=16.7891>0$.  Thus, according to Theorem \ref{thm:succeed}, the system is mean square stabilizable. From \eqref{k1}-\eqref{k2}, $\mathcal{K}=-0.7986$ and $\bar{\mathcal{K}}=-0.2915$ can be obtained, i.e., the controller is given from  \eqref{control} as $u_{t}=-0.7986x_{t}-0.2915Ex_{t}$.

The simulation result is shown in Fig.\ref{fig:2}. It can be seen that system state $x_{t}$ converges to zero in the mean square sense with the controller given above, as expected.
\begin{figure}[htbp]
  \centering
  \includegraphics[width=0.47\textwidth]{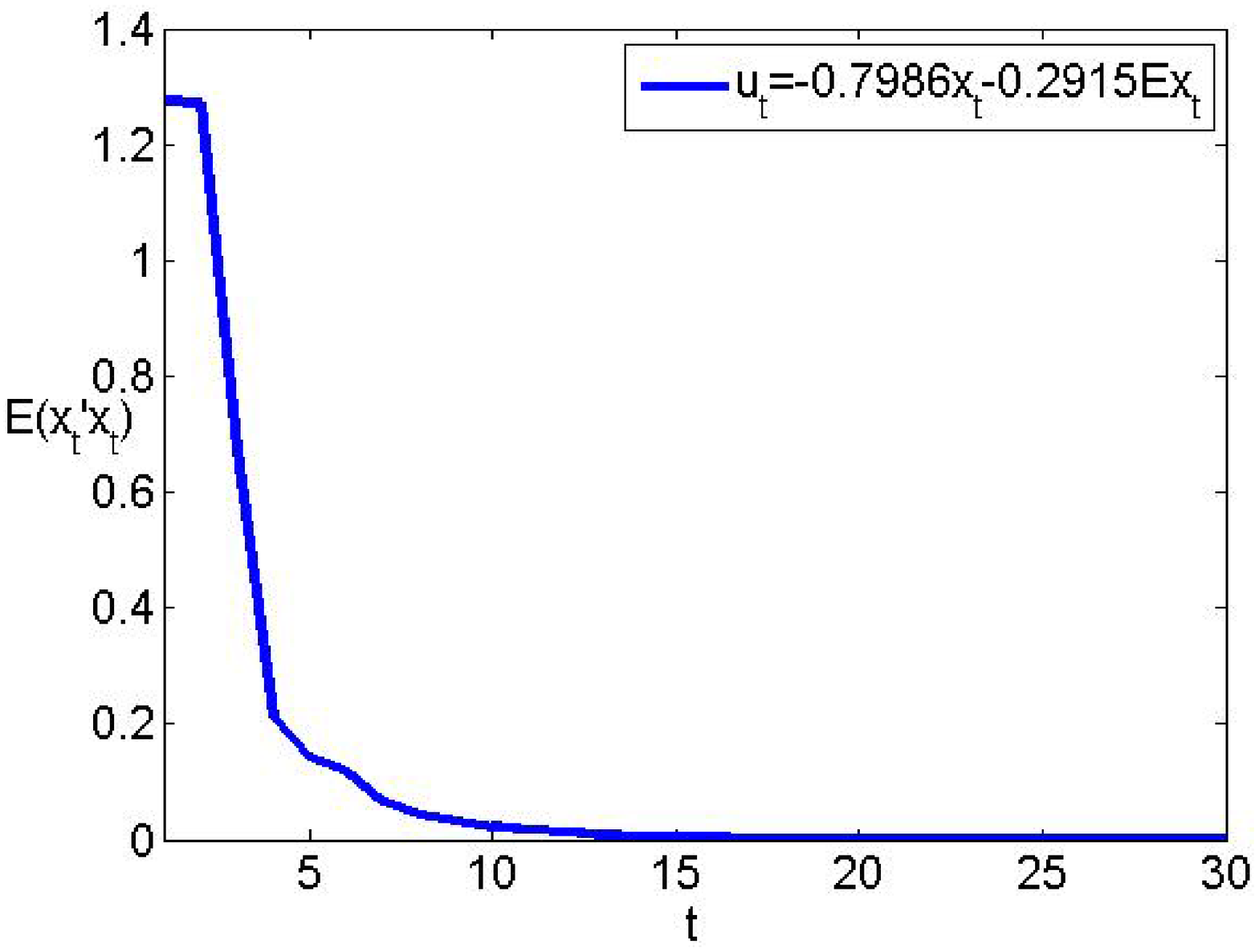}
  \caption{The mean square stabilization of mean-field system.}\label{fig:2}
\end{figure}

On the other hand, another example is presented in order to show necessity condition given in Theorem \ref{thm:succeed} and Theorem \ref{thm:succeed2}. Consider the mean-field system \eqref{ps1} and cost function \eqref{ps200} with the following coefficients:
\begin{align}\label{coe}
A&=2,\bar{A}=0.8,B=1,\bar{B}=1.2,C=0.1,\bar{C}=0.6,\notag\\
D&=-0.8,\bar{D}=-0.2,Q=1,\bar{Q}=1,R=1,\bar{R}=3.
\end{align}

By solving the coupled ARE \eqref{are1}-\eqref{are2} with coefficients \eqref{coe}, we find that $P$ has two negative roots as $-0.2356$ and $-2.4679$. From Theorem \ref{thm:succeed} we know that system \eqref{ps1} is not stabilizable in the mean square sense.

In fact, in the case $P=-2.4679$, from \eqref{are2} we know that there is no real roots for $\bar{P}$. While in the case of $P=-0.2356$, by solving \eqref{are2}, $\bar{P}$ has two real roots as $4.7637$ and $-0.0869$.

In the latter case, with $P=-0.2356$ and $\bar{P}=4.7637$, $\mathcal{K}$ and $\bar{\mathcal{K}}$ can be respectively calculated as $\mathcal{K}=0.2552$ and $\bar{\mathcal{K}}=-0.1106$. Similarly, when $P=-0.2356$ and $\bar{P}=-0.0869$, we can obtain $\mathcal{K}=0.2552$ and $\bar{\mathcal{K}}=-2.9453$. Accordingly, the controller can be designed as $u_{t}=0.2552x_{t}-0.1106Ex_{t}$ and $u_{t}=0.2552x_{t}-2.9453Ex_{t}$ respectively.

With the designed controller, the simulation results of the state trajectories are shown as in Fig. \ref{fig:3} and Fig. \ref{fig:4}. It can be seen that the system states are divergent.
\begin{figure}[htbp]
  \centering
  \includegraphics[width=0.47\textwidth]{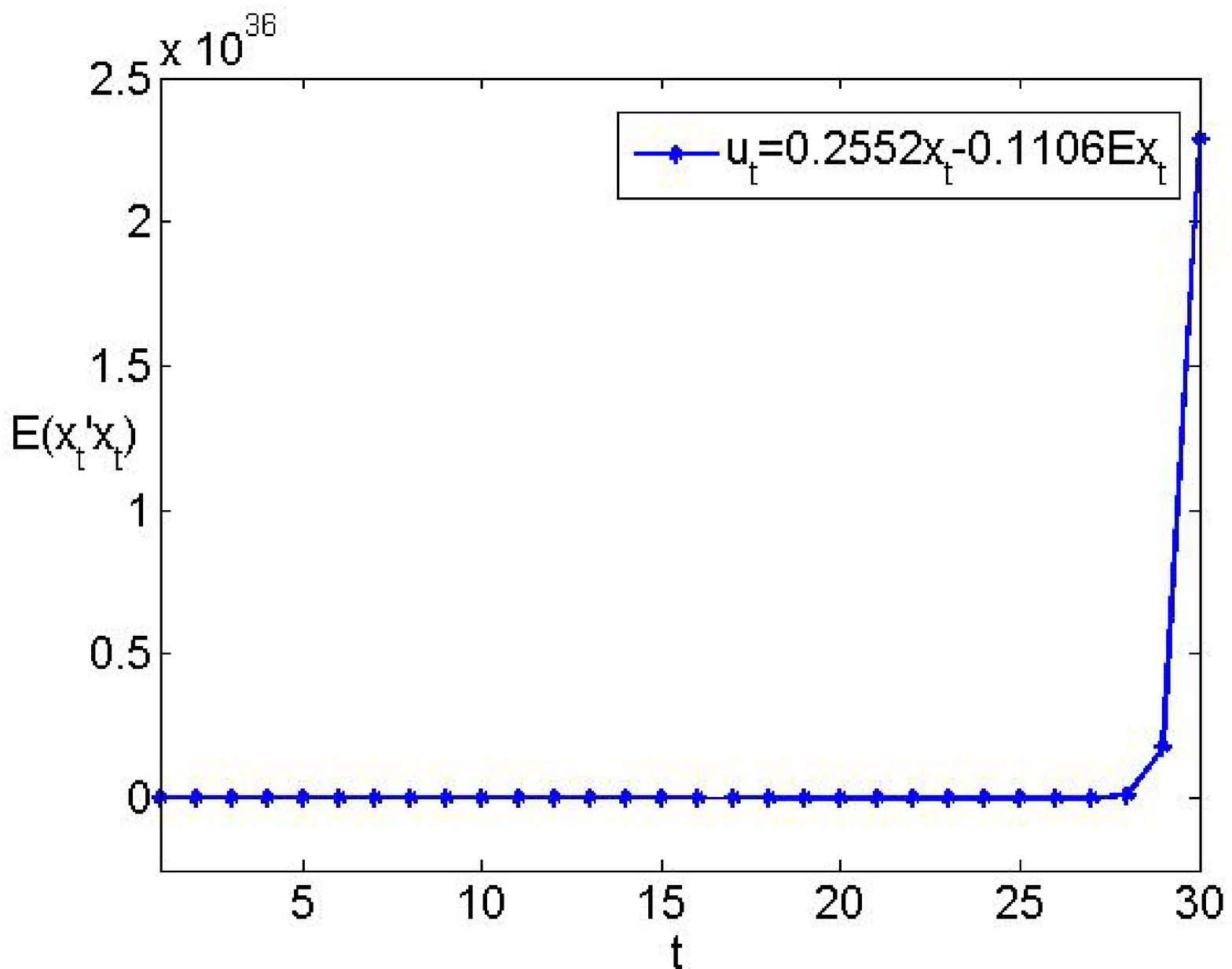}
  \caption{Simulations for system state trajectory $E(x_{t}'x_{t})$.}\label{fig:3}
\end{figure}
\begin{figure}[htbp]
  \centering
  \includegraphics[width=0.47\textwidth]{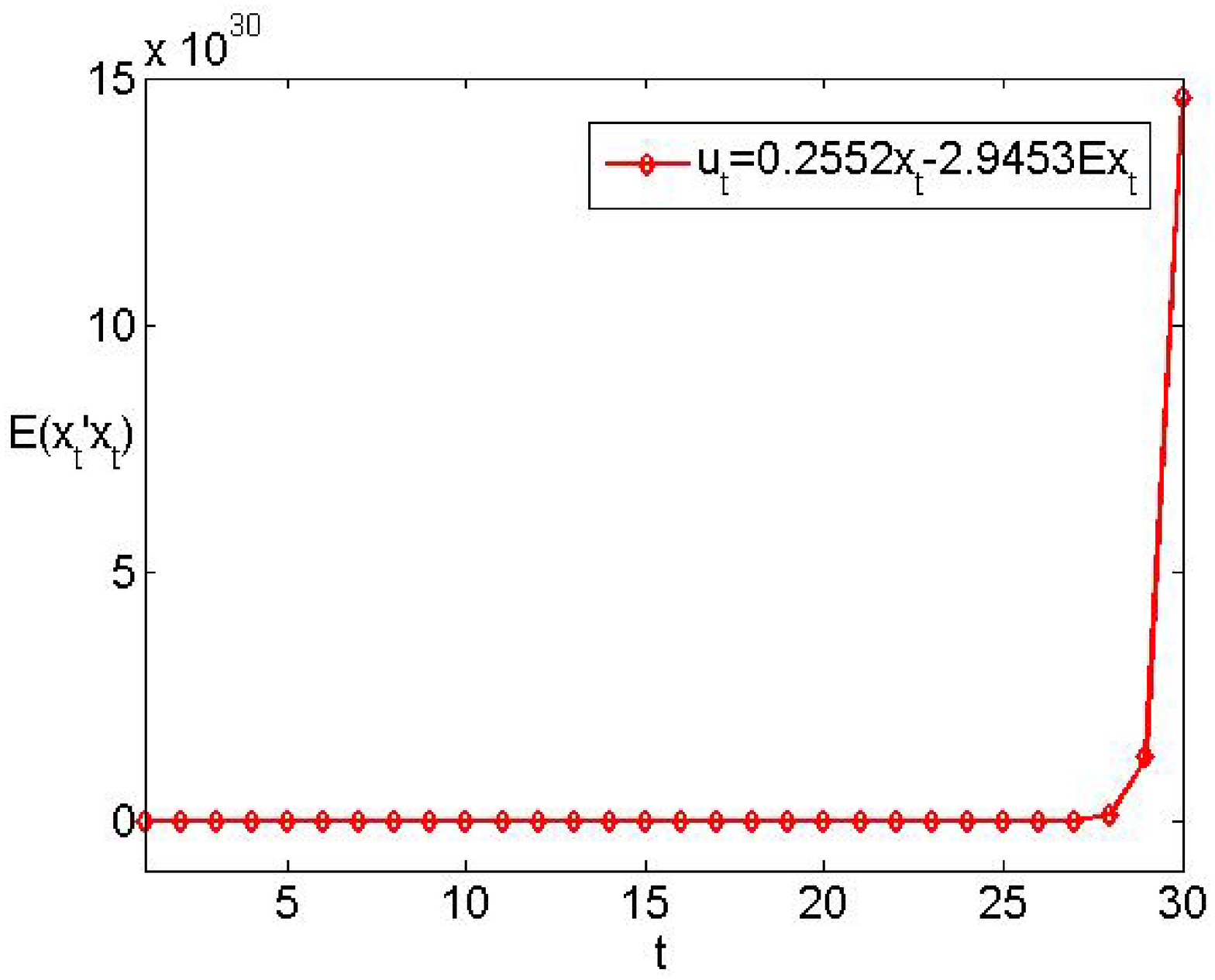}
  \caption{Simulations for system state trajectory $E(x_{t}'x_{t})$.}\label{fig:4}
\end{figure}
\section{Conclusions}\label{sec:5}
In this paper, the stabilization and control problems for linear continuous-time mean-field systems have been studied. By using the maximum principle and the solution to the FBSDE developed in this paper, we have presented the sufficient and necessary solvability condition of the finite horizon mean-field LQ control based on a coupled Riccati equation which is derived from the FBSDE. By defining an Lyapunov functional with the optimal cost function and applying the solution of the FBSDE, we have explored  the necessary and sufficient stabilization conditions for mean-field systems. It has been shown that, under exact detectability assumption, the system is mean square stabilizable if and only if the coupled ARE admits a unique positive semi-definite solution. Furthermore, we have also shown that, under exact observability assumption, the mean-field system is mean square stabilizable if and only if the coupled ARE admits a unique positive definite solution.


\section*{Appendix A: Proof of Theorem \ref{thm:maximum}}\label{app:A}
\begin{proof}
Clearly from \eqref{adm} we know that $\mathcal{U}[0,T]$ is non-empty, closed and convex subset of $\mathcal{R}^m$, any $u_{t}\in \mathcal{U}[0,T]$ is called admissible control. For $u_{t},~\delta u_{t}\in\mathcal{U}[0,T]$ and $\varepsilon\in[0,1]$, there always holds that $u_{t}^{\varepsilon}=u_{t}+\varepsilon \delta u_{t}\in \mathcal{U}[0,T]$.

Let $x_{t}^{\varepsilon}$, $J_{T}^{\varepsilon}$ be the corresponding state and cost function with $u_{t}^{\varepsilon}$, and $x_{t}$, $J_{T}$ are respectively the state and cost function associated with $u_{t}$.

Denote $\delta x_{t}=x_{t}^{\varepsilon}-x_{t}$ and set $\delta X_{t}=\left[
  \begin{array}{cc}
   \hspace{-1mm} \delta x_{t} \hspace{-1mm}\\
    \hspace{-1mm}E\delta x_{t}\hspace{-1mm}
  \end{array}
\hspace{-1mm}\right]$, then by \eqref{ps1} and \eqref{ps20}, the following assertion holds,
\begin{align}\label{asser}
 d\delta X_{t}&=\left[\mathcal{A}\delta X_{t}+\mathcal{B}\varepsilon\delta u_{t}+\bar{\mathcal{B}}E\varepsilon\delta u_{t}\right]dt\notag\\
&+\left[\mathcal{C}\delta X_{t}+\mathcal{D}\varepsilon\delta u_{t}+\bar{\mathcal{D}}E\varepsilon\delta u_{t}\right]dW_{t},
\end{align}
where
 \begin{align}\label{label}
 \mathcal{A}&=\left[
  \begin{array}{cc}
   \hspace{-1mm} A\hspace{-1mm} & \hspace{-1mm}\bar{A} \\
   \hspace{-1mm} 0    \hspace{-1mm}            & \hspace{-1mm}A+\bar{A}\\
  \end{array}
\hspace{-1mm}\right],~\mathcal{B}=\left[
  \begin{array}{cc}
   \hspace{-1mm}  B\hspace{-1mm}\\
    \hspace{-1mm}0\hspace{-1mm}
  \end{array}
\hspace{-1mm}\right],~\bar{\mathcal{B}}=\left[
  \begin{array}{cc}
 \hspace{-1mm}\bar{B} \\
  \hspace{-1mm}B+\bar{B}\\
  \end{array}
\hspace{-1mm}\right],\notag\\
 \mathcal{C}&=\left[
  \begin{array}{cc}
   \hspace{-1mm} C\hspace{-1mm} & \hspace{-1mm}\bar{C} \\
   \hspace{-1mm} 0    \hspace{-1mm}            & \hspace{-1mm}0\\
  \end{array}
\hspace{-1mm}\right],~\mathcal{D}=\left[
  \begin{array}{cc}
   \hspace{-1mm}  D\hspace{-1mm}\\
    \hspace{-1mm}0\hspace{-1mm}
  \end{array}
\hspace{-1mm}\right],~\bar{\mathcal{D}}=\left[
  \begin{array}{cc}
 \hspace{-1mm}\bar{D} \\
  \hspace{-1mm}0\\
  \end{array}
\hspace{-1mm}\right].
\end{align}

By \emph{Theorem 6.14} in \cite{yongj}, from \eqref{asser} we know $\delta x_{t}$ can be obtained as
\begin{align}\label{aug2}
\delta x_{t}
&=[I_{n}~0]\Phi_{t}\int_{0}^{t}\Phi_{s}^{-1}[(\mathcal{B}\varepsilon\delta u_{s}+\bar{\mathcal{B}}E\varepsilon\delta u_{s})\notag\\
&~~-\mathcal{C}(\mathcal{D}\varepsilon\delta u_{s}+\bar{\mathcal{D}}E\varepsilon\delta u_{s})]ds\notag\\
&+[I_{n}~0]\Phi_{t}\int_{0}^{t}\Phi_{s}^{-1}(\mathcal{D}\varepsilon\delta u_{s}+\bar{\mathcal{D}}E\varepsilon\delta u_{s})dW_{s}.
\end{align}
where $\Phi_{t}$ is the unique solution of the following SDE
\begin{equation}\label{phi}
\left\{ \begin{array}{ll}
d\Phi_{t}=\mathcal{A}\Phi_{t}dt+\mathcal{C}\Phi_{t}dW_{t},\\
\Phi_{0}=I_{2n},\\
\end{array} \right.
\end{equation}
and $\Phi_{t}^{-1}=\Psi_{t}$ exists, satisfying
\begin{equation}\label{psi}
\left\{ \begin{array}{ll}
d\Psi_{t}=\Psi_{t}(-\mathcal{A}+\mathcal{C}^{2})dt-\Psi_{t}\mathcal{C}dW_{t},\\
\Psi_{0}=I_{2n}.\\
\end{array} \right.
\end{equation}

Since the coefficient matrices in \eqref{ps1} are given deterministic and $E\int_{0}^{T}u_t'u_t dt<\infty$, thus \eqref{aug2} indicates that there exists constant $C_0>0$ satisfying
\begin{align}\label{43}E(\delta x_t'\delta x_t)<C_0 \varepsilon^2. \end{align}

Following from cost function \eqref{ps2} and using \eqref{43}, the increment of $J_{T}$, i.e., $\delta J_{T}=J_{T}^{\varepsilon}-J_{T}$ can be calculated as follows
\begin{align}\label{mp0001}
&\delta J_T=J_T^{\varepsilon}-J_{T}\notag\\
&=2E\Big\{\int_{0}^{T}\Big[x_{t}'Q\delta x_{t}+Ex_{t}'\bar{Q}E\delta x_{t}+u_{t}'R\varepsilon\delta u_{t}\notag\\
&~~+Eu_{t}'\bar{R}E\varepsilon\delta u_{t}\Big]dt
+x_{T}'P_T\delta x_{T}+Ex_{T}'\bar{P}_TE\delta x_{T}\Big\}+o(\varepsilon)\notag\\
&=2E\Big\{\int_{0}^{T}\Big[(x_{t}'Q\hspace{-1mm}+\hspace{-1mm}Ex_{t}'\bar{Q})\delta x_{t}+(u_{t}'R+\hspace{-1mm}Eu_{t}'\bar{R})\varepsilon\delta u_{t}\Big]dt\notag\\
&~~+(x_{T}'P_T+Ex_{T}'\bar{P}_T)\delta x_{T}\Big\}+o(\varepsilon),
\end{align}
where $o(\varepsilon)$ means infinitesimal of higher order with $\varepsilon$.

By plugging \eqref{aug2} into \eqref{mp0001}, then $\delta J_{T}$ can be given as follows
\begin{align}\label{var}
&~~~\delta J_{T}=J_{T}^{\varepsilon}-J_{T}\notag\\
&=2E\Bigg\{\int_{0}^{T}\Big\{\Big[\int_{s}^{T}(x_{t}'Q+Ex_{t}'\bar{Q})[I_{n}~0]\Phi_{t}dt\notag\\
&~~+(x_{T}'P_T+Ex_{T}'\bar{P}_T)[I_{n}~0]\Phi_{T}\Big]\Phi_{s}^{-1}(\mathcal{B}-\mathcal{C}\mathcal{D})\notag\\
&~~+E\Big\{\Big[\int_{s}^{T}(x_{t}'Q\hspace{-1mm}+\hspace{-1mm}Ex_{t}'\bar{Q})[I_{n}~0]\Phi_{t}dt\notag\\
&~~+(x_{T}'P_T+Ex_{T}'\bar{P}_T))[I_{n}~0]\Phi_{T}\Big]
\Phi_{s}^{-1}(\bar{\mathcal{B}}-\mathcal{C}\bar{\mathcal{D}})\Big\}\notag\\
&~~+u_{s}'R+Eu_{s}'\bar{R}\Big\}\varepsilon\delta u_{s}ds\Bigg\}\notag\\
&+2E\Big\{\Big[\int_{0}^{T}(x_{t}'Q+Ex_{t}'\bar{Q})[I_{n}~0]\Phi_{t}dt\notag\\
&~~+(x_{T}'P_T+Ex_{T}'\bar{P}_T))[I_{n}~0]\Phi_{T}\Big]\notag\\
&~~\times \int_{0}^{T}\Phi_{s}^{-1}(\mathcal{D}\varepsilon\delta u_{s}+\hspace{-1mm}\bar{\mathcal{D}}E\varepsilon\delta u_{s})dW_{s}\Big\}\hspace{-1mm}+\hspace{-1mm}o(\varepsilon),
\end{align}
where the following relationship has been used
\begin{align*}
&E\Big\{\int_{0}^{T}(x_{t}'Q+Ex_{t}'\bar{Q})[I_{n}~0]\Phi_{t}\notag\\
&~~\times \int_{0}^{t}\Phi_{s}^{-1}(\mathcal{D}\varepsilon\delta u_{s}+\bar{\mathcal{D}}E\varepsilon\delta u_{s})dW_{s}dt\Big\}\\
&=E\Big\{\int_{0}^{T}(x_{t}'Q+Ex_{t}'\bar{Q})[I_{n}~0]\Phi_{t}dt \notag\\ &~~\times E\Big[\int_{0}^{T}\Phi_{s}^{-1}(\mathcal{D}\varepsilon\delta u_{s}+\bar{\mathcal{D}}E\varepsilon\delta u_{s})dW_{s}\Big|\mathcal{F}_{t}\Big]\Big\}\\
&=E\Big\{\int_{0}^{T}(x_{t}'Q+Ex_{t}'\bar{Q})[I_{n}~0]\Phi_{t}dt\notag\\
&~~\times \int_{0}^{T}\Phi_{s}^{-1}(\mathcal{D}\varepsilon\delta u_{s}+\bar{\mathcal{D}}E\varepsilon\delta u_{s})dW_{s}\Big\}.
\end{align*}

Denote $\xi=\int_{0}^{T}(x_{t}'Q+Ex_{t}'\bar{Q})[I_{n}~0]\Phi_{t}dt+(x_{T}'P_T+Ex_{T}'\bar{P}_T))[I_{n}~0]\Phi_{T}$, then $E[\xi|\mathcal{F}_{s}]$ is a martingale with respect to $s$. By the Martingale Representation Theorem, there exists a unique $\mathcal{F}_{t}$-adapted process $\eta_{t}$ such that $E[\xi|\mathcal{F}_{s}]=E[\xi]+\int_{0}^{s}\eta_{t}'dW_{t}$. With $s=T$ and $\xi=E[\xi|\mathcal{F}_{T}]$, we have that
\begin{align}\label{xi}\xi=E[\xi]+\int_{0}^{T}\eta_{t}'dW_{t}.\end{align}

Substituting \eqref{xi} into the last but one term in \eqref{var}, we can obtain
\begin{align}\label{ls}
&E\hspace{-1mm}\Bigg\{\hspace{-1.6mm}\left[\int_{0}^{T}\hspace{-1mm}(x_{t}'Q\hspace{-1mm}+\hspace{-1mm}Ex_{t}'\bar{Q})[I_{n}~0]\Phi_{t}dt\hspace{-1mm}
+\hspace{-1mm}(x_{T}'P_T\hspace{-1mm}+\hspace{-1mm}Ex_{T}'\bar{P}_T))[I_{n}~0]\Phi_{T}\hspace{-1mm}\right]\notag\\
&\times \int_{0}^{T}\Phi_{s}^{-1}(\mathcal{D}\varepsilon\delta u_{s}+\bar{\mathcal{D}}E\varepsilon\delta u_{s})dW_{s}\Bigg\}\notag\\
&=E\Bigg\{E[\xi]\int_{0}^{T}\Phi_{s}^{-1}(\mathcal{D}\varepsilon\delta u_{s}+\bar{\mathcal{D}}E\varepsilon\delta u_{s})dW_{s}\notag\\
&+\int_{0}^{T}\eta_{t}'dW_{t}\int_{0}^{T}\Phi_{s}^{-1}(\mathcal{D}\varepsilon\delta u_{s}+\bar{\mathcal{D}}E\varepsilon\delta u_{s})dW_{s}\Bigg\}\notag\\
&=E\left\{\int_{0}^{T}\eta_{s}'\Phi_{s}^{-1}(\mathcal{D}\varepsilon\delta u_{s}+\bar{\mathcal{D}}E\varepsilon\delta u_{s})ds\right\}\notag\\
&=E\left\{\int_{0}^{T}\left\{\eta_{s}'\Phi_{s}^{-1}\mathcal{D}+E[\eta_{s}'\Phi_{s}^{-1}\bar{\mathcal{D}}]\right\}\varepsilon\delta u_{s}ds\right\}.
\end{align}

Denote
\begin{align}\label{gs}
H'_{s}&=\Big[\int_{s}^{T}(x_{t}'Q+Ex_{t}'\bar{Q})[I_{n}~0]\Phi_{t}dt\notag\\
&~~+(x_{T}'P_T+Ex_{T}'\bar{P}_T)[I_{n}~0]\Phi_{T}\Big]\Phi_{s}^{-1}(\mathcal{B}-\mathcal{C}\mathcal{D})\notag\\
&+E\Big\{\Big[\int_{s}^{T}(x_{t}'Q+Ex_{t}'\bar{Q})[I_{n}~0]\Phi_{t}dt\notag\\
&~~+(x_{T}'P_T+Ex_{T}'\bar{P}_T)[I_{n}~0]\Phi_{T}\Big]\Phi_{s}^{-1}(\bar{\mathcal{B}}-\mathcal{C}\bar{\mathcal{D}})\Big\}\notag\\
&+u_{s}'R\hspace{-1mm}+\hspace{-1mm}Eu_{s}'\bar{R}+\eta_{s}'\Phi_{s}^{-1}\mathcal{D}
\hspace{-1mm}+\hspace{-1mm}E[\eta_{s}'\Phi_{s}^{-1}\bar{\mathcal{D}}].
\end{align}

By using \eqref{gs} and \eqref{ls}, equation \eqref{var} can be rewritten as
\begin{align}
\delta J_{T}&=2E\int_{0}^{T}H'_{s}\varepsilon\delta u_{s}ds+o(\varepsilon)\notag\\
&=2E\int_{0}^{T}E[H'_{s}|\mathcal{F}_{s}]\varepsilon\delta u_{s}ds+o(\varepsilon).
\end{align}

By the arbitrary of $\delta u_{s}$, we know the necessary condition of minimizing \eqref{ps2} is $E[H'_{s}|\mathcal{F}_{s}]=0$, i.e.,
\begin{align}\label{eq}
0&=Ru_{s}+\bar{R}Eu_{s}\notag\\
&+\hspace{-1mm}E\Bigg\{(\mathcal{B}\hspace{-1mm}-\hspace{-1mm}\mathcal{C}\mathcal{D})'(\Phi_{s}')^{-1}\notag\\
&\times \hspace{-1mm}\Big[\int_{s}^{T}\Phi_{t}'\hspace{-1mm}\left[
  \begin{array}{cc}
 \hspace{-2mm}I_{n}\hspace{-3mm} \\
  \hspace{-2mm}0\hspace{-3mm}\\
  \end{array}
\hspace{-2mm}\right]\hspace{-1mm}(Qx_{t}\hspace{-1mm}+\hspace{-1mm}\bar{Q}Ex_{t})dt\hspace{-1mm}+\hspace{-1mm}\Phi_{T}'\left[
  \begin{array}{cc}
 \hspace{-2mm}I_{n}\hspace{-3mm} \\
  \hspace{-2mm}0\hspace{-3mm}\\
  \end{array}
\hspace{-2mm}\right]\hspace{-1mm}(P_Tx_{T}\hspace{-1mm}+\hspace{-1mm}\bar{P}_TEx_{T})\Big]\notag\\
&+\hspace{-1mm}E\Big\{(\bar{\mathcal{B}}\hspace{-1mm}-\hspace{-1mm}\mathcal{C}\bar{\mathcal{D}})'(\Phi_{s}')^{-1} \notag\\ &\times \hspace{-1mm}\Big[\int_{s}^{T}\hspace{-1mm}\Phi_{t}'\hspace{-1mm}\left[
  \begin{array}{cc}
 \hspace{-2mm}I_{n}\hspace{-3mm} \\
  \hspace{-2mm}0\hspace{-3mm}\\
  \end{array}
\hspace{-2mm}\right]\hspace{-1mm}(Qx_{t}\hspace{-1mm}+\hspace{-1mm}\bar{Q}Ex_{t})dt\hspace{-1mm}+\hspace{-1mm}\Phi_{T}'\left[
  \begin{array}{cc}
 \hspace{-2mm}I_{n} \hspace{-3mm}\\
  \hspace{-2mm}0\hspace{-3mm}\\
  \end{array}
\hspace{-2mm}\right](P_Tx_{T}\hspace{-1mm}+\hspace{-1mm}\bar{P}_TEx_{T})\Big]\hspace{-1mm}\Big\}\notag\\
&+\hspace{-1mm}\mathcal{D}'(\Phi_{s}')^{-1}\eta_{s}+E[\bar{\mathcal{D}}'(\Phi_{s}')^{-1}\eta_{s}]\Bigg|\mathcal{F}_{s}\Bigg\}.
\end{align}

Define $p_{s}$, $q_{s}$ respectively as follows,
\begin{align}
p_{s}&\hspace{-1mm}=\hspace{-1mm}E\Bigg\{(\Phi_{s}')^{-1} \Big[\int_{s}^{T}\Phi_{t}'\left[
  \begin{array}{cc}
 \hspace{-2mm}I_{n}\hspace{-3mm} \\
  \hspace{-2mm}0\hspace{-3mm}\\
  \end{array}
\hspace{-1mm}\right](Qx_{t}\hspace{-1mm}+\hspace{-1mm}\bar{Q}Ex_{t})dt
\hspace{-1mm}\notag\\
&+\hspace{-1mm}\Phi_{T}'\left[
  \begin{array}{cc}
 \hspace{-2mm}I_{n}\hspace{-3mm} \\
  \hspace{-2mm}0\hspace{-3mm}\\
  \end{array}
\hspace{-1mm}\right](P_Tx_{T}+\bar{P}_TEx_{T})\Big]\Bigg|\mathcal{F}_{s}\Bigg\},\label{ps}\\
q_{s}&\hspace{-1mm}=\hspace{-1mm}(\Phi_{s}')^{-1}\eta_{s}-\mathcal{C}'p_{s}.\label{qs}
\end{align}

Thus, using \eqref{ps} and \eqref{qs}, equation \eqref{eq} implies that
\begin{align}\label{eq2}
0&=Ru_{s}+\bar{R}Eu_{s}+E\Big\{\mathcal{B}'p_{s}+\mathcal{D}'q_{s}
+E[\bar{\mathcal{B}}'p_{s}+\bar{\mathcal{D}}'q_{s}]|\mathcal{F}_{s}
\Big\}.
\end{align}

Finally, applying It\^{o}'s formula, it is easy to verify that $(p_{s},q_{s})$ in \eqref{ps} and \eqref{qs} is the solution pair of \eqref{bsde}.\end{proof}

\section*{Appendix B: Proof of Theorem \ref{thm:main}}\label{app:B}

\begin{proof}
``Sufficiency": Suppose $\Upsilon_{t}^{(1)}>0$ and $\Upsilon_{t}^{(2)}>0$ for $t\in[0,T]$, we will show \emph{Problem 1} admits a unique solution.

Applying It\^{o}'s formula to $x_{t}'P_{t}x_{t}+Ex_{t}'\bar{P}_{t}Ex_{t}$, taking integral from $0$ to $T$ and then
expectation, we have
\begin{align}\label{pp}
&~~~~E\int_{0}^{T}d(x_{t}'P_{t}x_{t}+Ex_{t}'\bar{P}_{t}Ex_{t})\notag\\
&=E\int_{0}^{T}\Big[(dx_{t})'P_{t}x_{t}\hspace{-1mm}+\hspace{-1mm}x_{t}'P_{t}dx_{t}\hspace{-1mm}+\hspace{-1mm}x_{t}'\dot{P}_{t}x_{t}d_{t}+(dx_{t})'P_{t}dx_{t}\notag\\
&+(dEx_{t})'\bar{P}_{t}Ex_{t}+Ex_{t}'\dot{\bar{P}}_{t}Ex_{t}dt+Ex_{t}'\bar{P}_{t}dEx_{t}\Big]\notag\\
&=E\hspace{-1mm}\int_{0}^{T}[x_{t}'(\dot{P}_{t}\hspace{-1mm}+\hspace{-1mm}A'P_{t}\hspace{-1mm}+\hspace{-1mm}P_{t}A\hspace{-1mm}+\hspace{-1mm}C'P_{t}C)x_{t}\hspace{-1mm}
+\hspace{-1mm}u_{t}'D'P_{t}Du_{t}\notag\\
&~~+2u_{t}'(B'P_{t}\hspace{-1mm}+\hspace{-1mm}D'P_{t}C)x_{t}]dt\notag\\
&+E\int_{0}^{T}Ex_{t}'[\dot{\bar{P}}_{t}+\bar{A}'P_{t}+P_{t}\bar{A}+(A+\bar{A})'\bar{P}_{t}\notag\\
&~~+\bar{P}_{t}(A+\bar{A})
+C'P_{t}\bar{C}+\bar{C}'P_{t}C+\bar{C}'P_{t}\bar{C}]Ex_{t}dt\notag\\
&+2\int_{0}^{T}Eu_{t}'[\bar{B}'P_{t}+D'P_{t}\bar{C}+\bar{D}'P_{t}C
+\bar{D}'P_{t}\bar{C}\notag\\
&~~+(B+\bar{B})'\bar{P}_{t}]Ex_{t}dt\notag\\
&+E\int_{0}^{T}Eu_{t}'(D'P_{t}\bar{D}+\bar{D}'P_{t}D+\bar{D}'P_{t}\bar{D})Eu_{t}dt.
\end{align}

Notice Riccati equation \eqref{Ric1}-\eqref{Ric2} and \eqref{kt}-\eqref{barkt}, \eqref{pp} implies that
\begin{align}\label{cost1}
&~~E(x_{T}'P_{T}x_{T})\hspace{-1mm}+\hspace{-1mm}Ex_{T}'\bar{P}_{T}Ex_{T}\hspace{-1mm}-\hspace{-1mm}[E(x_{0}'P_{0}x_{0})+Ex_{0}'\bar{P}_{0}Ex_{0}]\notag\\
&=-E\int_{0}^{T}[x_{t}'Qx_{t}\hspace{-1mm}+\hspace{-1mm}Ex_{t}'\bar{Q}Ex_{t}\hspace{-1mm}+\hspace{-1mm}
u_{t}'Ru_{t}\hspace{-1mm}+\hspace{-1mm}Eu_{t}'\bar{R}Eu_{t}]dt\notag\\
&+E\int_{0}^{T}[u_{t}-Eu_{t}-K_{t}(x_{t}-Ex_{t})]'\Upsilon_{t}^{(1)}\notag\\
&~~\times [u_{t}-Eu_{t}-K_{t}(x_{t}-Ex_{t})]dt\notag\\
&+E\int_{0}^{T}[Eu_{t}\hspace{-1mm}-\hspace{-1mm}(K_{t}\hspace{-1mm}+\hspace{-1mm}\bar{K}_{t})Ex_{t}]'\Upsilon_{t}^{(2)}
[Eu_{t}\hspace{-1mm}-\hspace{-1mm}(K_{t}\hspace{-1mm}+\hspace{-1mm}\bar{K}_{t})Ex_{t}]dt.
\end{align}
where $ K_{t}, \bar{K}_{t}$, $\Upsilon_{t}^{(1)}, \Upsilon_{t}^{(2)}$ are given by \eqref{kt}-\eqref{barkt} and \eqref{upsi1}-\eqref{upsi2}, respectively.

Since $\Upsilon_{t}^{(1)}>0$, $\Upsilon_{t}^{(2)}>0$, from \eqref{cost1} the cost function $J_{T}$ in \eqref{ps2} can be given as
\begin{align}\label{cost2}
&J_{T}=E(x_{0}'P_{0}x_{0})+Ex_{0}'\bar{P}_{0}Ex_{0}\notag\\
&+E\int_{0}^{T}[u_{t}-Eu_{t}-K_{t}(x_{t}-Ex_{t})]'\Upsilon_{t}^{(1)}\notag\\
&~~\times [u_{t}-Eu_{t}-K_{t}(x_{t}-Ex_{t})]dt\notag\\
&+E\int_{0}^{T}[Eu_{t}-(K_{t}+\bar{K}_{t})Ex_{t}]'\Upsilon_{t}^{(2)}[Eu_{t}\hspace{-1mm}-\hspace{-1mm}
(K_{t}\hspace{-1mm}+\hspace{-1mm}\bar{K}_{t})Ex_{t}]dt\notag\\
&\geq E(x_{0}'P_{0}x_{0})+Ex_{0}'\bar{P}_{0}Ex_{0}.
\end{align}

Thus the minimum of $J_{T}$ is given by \eqref{op}, i.e.,
\begin{equation*}
  J_{T}^{*}=E(x_{0}'P_{0}x_{0})+Ex_{0}'\bar{P}_{0}Ex_{0}.
\end{equation*}

In this case, the controller will satisfy
\begin{align}
u_{t}-Eu_{t}-K_{t}(x_{t}-Ex_{t})&=0,\label{l1}\\
Eu_{t}-(K_{t}+\bar{K}_{t})Ex_{t}&=0.\label{l2}
\end{align}
Thus the optimal controller can be uniquely obtained from \eqref{l1}-\eqref{l2} as \eqref{opti}. The sufficiency proof is complete.

``Necessity": If \emph{Problem 1} has a unique solution, we will show that under Assumption \ref{ass:ass1}, $\Upsilon_t^{(1)}>0$ and $\Upsilon_t^{(2)}>0$.

Since \emph{Problem 1} is solvable, thus the FBSDE from the necessary condition (Maximum Principle) is solvable. Without loss of generality, we assume $p_{t}$ in adjoint equation \eqref{bsde} and system state $x_{t}, Ex_{t}$ admit the following linear relationship,
\begin{align}\label{ptt}
p_{t}=\left[
  \begin{array}{cc}
   \hspace{-1mm} P_{t}\hspace{-1mm} & \hspace{-1mm}\bar{P}_{t}^{(1)} \\
   \hspace{-1mm} \bar{P}_{t}^{(2)}    \hspace{-1mm}            & \hspace{-1mm}\bar{P}_{t}^{(3)}\\
  \end{array}
\hspace{-1mm}\right]\left[
  \begin{array}{cc}
   \hspace{-1mm}  x_{t}\hspace{-1mm}\\
    \hspace{-1mm}Ex_{t}\hspace{-1mm}
  \end{array}
\hspace{-1mm}\right]+\Theta_t,
\end{align}
where $P_{t}$, $\bar{P}_{t}^{(1)}$, $\bar{P}_{t}^{(2)}$, $\bar{P}_{t}^{(3)}$ and $\Theta_t$ are differential functions to be determined.

Firstly, we give the following coupled Riccati equation:
\begin{align}
 -\dot{P}_{t}&=Q\hspace{-1mm}+\hspace{-1mm}P_{t}A\hspace{-1mm}+\hspace{-1mm}A'P_{t}\hspace{-1mm}+\hspace{-1mm}C'P_{t}C-[M_{t}^{(1)}]'[\Upsilon_{t}^{(1)}]^{\dag}M_{t}^{(1)},\label{lkj}\\
-\dot{\bar{P}}_{t}&=\bar{Q}+P_{t}\bar{A}+\bar{A}'P_{t}+(A+\bar{A})'\bar{P}_{t}+\bar{P}_{t}(A+\bar{A})\notag\\
&+\bar{C}'P_{t}\bar{C}+C'P_{t}\bar{C}+\bar{C}'P_{t}C\notag\\
&
+[M_{t}^{(1)}]'[\Upsilon_{t}^{(1)}]^{\dag}M_{t}^{(1)}-[M_{t}^{(2)}]'[\Upsilon_{t}^{(2)}]^{\dag}M_{t}^{(2)}.\label{barp}
\end{align}
where $\dag$ means the Moore-Penrose inverse of a matrix, final condition $P_T, \bar{P}_T$ are given in \eqref{ps2}. $\Upsilon_{t}^{(1)}$, $\Upsilon_{t}^{(2)}$, $M_{t}^{(1)}$, $M_{t}^{(2)}$ are the same form with \eqref{upsi1}-\eqref{upsi2} and \eqref{mt1}-\eqref{mt2} and $P_t$, $\bar{P}_t$ satisfying \eqref{lkj}-\eqref{barp}.

A solution to \eqref{lkj}-\eqref{barp} is called \emph{regular}, if
\begin{align}\label{reg1}
  \Upsilon_{t}^{(i)}[\Upsilon_{t}^{(i)}]^{\dag}M_{t}^{(i)}=M_{t}^{(i)}, i=1,2.
\end{align}

Now we will show if \emph{Problem 1} is solvable and a solution to Riccati equation \eqref{lkj}-\eqref{barp} is \emph{regular}, then $\Theta_t=0$.

Assume
\begin{align}\label{Theta}
 d\Theta_t=\Theta_t^1 dt+\Theta_t^2 dW_t,~\Theta_T=0,
\end{align}  where $\Theta_t^1, \Theta_t^2$ are to be determined. Then applying It\^{o}'s formula to $p_t$ in \eqref{ptt}, we obtain that
\begin{align}\label{ito}
dp_{t}&\hspace{-1mm}=\hspace{-1mm}\left[
  \begin{array}{cc}
   \hspace{-2mm} P_{t}\hspace{-2mm} & \hspace{-2mm}\bar{P}_{t}^{(1)}\hspace{-1mm} \\
   \hspace{-2mm} \bar{P}_{t}^{(2)}    \hspace{-2mm}            & \hspace{-2mm}\bar{P}_{t}^{(3)}\hspace{-1mm}\\
  \end{array}
\hspace{-2mm}\right]\hspace{-2mm}\Bigg\{\hspace{-2mm}\left[
  \begin{array}{cc}
   \hspace{-2mm} A\hspace{-2mm} & \hspace{-2mm}\bar{A} \\
   \hspace{-2mm} 0    \hspace{-2mm}            & \hspace{-2mm}A\hspace{-1mm}+\hspace{-1mm}\bar{A}\\
  \end{array}
\hspace{-2mm}\right]\hspace{-1mm}\left[
  \begin{array}{cc}
   \hspace{-2mm} x_{t} \hspace{-2mm}\\
    \hspace{-2mm}Ex_{t}\hspace{-2mm}
  \end{array}
\hspace{-2mm}\right]\hspace{-1mm}+\hspace{-1mm}\left[
  \begin{array}{cc}
   \hspace{-2mm}  B\hspace{-2mm}\\
    \hspace{-1mm}0\hspace{-2mm}
  \end{array}
\hspace{-2mm}\right]u_{t}
\hspace{-1mm}+\hspace{-1mm}\left[
  \begin{array}{cc}
 \hspace{-2mm}\bar{B} \\
  \hspace{-2mm}B\hspace{-1mm}+\hspace{-1mm}\bar{B}\\
  \end{array}
\hspace{-2mm}\right]Eu_{t}\Bigg\}dt\notag\\
&+\left[
  \begin{array}{cc}
   \hspace{-2mm} P_{t}\hspace{-2mm} & \hspace{-2mm}\bar{P}_{t}^{(1)} \\
   \hspace{-2mm} \bar{P}_{t}^{(2)}    \hspace{-2mm}            & \hspace{-2mm}\bar{P}_{t}^{(3)}\\
  \end{array}
\hspace{-2mm}\right]\Bigg\{\left[
  \begin{array}{cc}
   \hspace{-2mm} C\hspace{-2mm} & \hspace{-2mm}\bar{C} \\
   \hspace{-2mm} 0    \hspace{-2mm}& \hspace{-2mm}0\\
  \end{array}
\hspace{-2mm}\right]\hspace{-1mm}\left[
  \begin{array}{cc}
   \hspace{-2mm} x_{t} \hspace{-2mm}\\
    \hspace{-2mm}Ex_{t}\hspace{-2mm}
  \end{array}
\hspace{-2mm}\right]\hspace{-1mm}+\hspace{-1mm}\left[
  \begin{array}{cc}
   \hspace{-2mm}  D\hspace{-2mm}\\
    \hspace{-2mm}0\hspace{-2mm}
  \end{array}
\hspace{-2mm}\right]u_{t}\hspace{-1mm}+\hspace{-1mm}\left[
  \begin{array}{cc}
 \hspace{-2mm}\bar{D} \\
  \hspace{-2mm}0\\
  \end{array}
\hspace{-2mm}\right]Eu_{t}\Bigg\}dW_{t}\notag\\
&+\left[
  \begin{array}{cc}
   \hspace{-1mm} \dot{P}_{t}\hspace{-1mm} & \hspace{-1mm}\dot{\bar{P}}_{t}^{(1)} \\
   \hspace{-1mm} \dot{\bar{P}}_{t}^{(2)}    \hspace{-1mm}            & \hspace{-1mm}\dot{\bar{P}}_{t}^{(3)}\\
  \end{array}
\hspace{-1mm}\right]\left[
  \begin{array}{cc}
   \hspace{-1mm}  x_{t}\hspace{-1mm}\\
    \hspace{-1mm}Ex_{t}\hspace{-1mm}
  \end{array}
\hspace{-1mm}\right]dt+ \Theta_t^1dt+\Theta_t^2dW_t.
\end{align}

Comparing the $dW_{t}$ term in \eqref{ito} with that in \eqref{bsde}, we have that
\begin{align}\label{qt}
q_{t}\hspace{-1mm} =\hspace{-1mm}\left[
  \begin{array}{cc}
   \hspace{-2mm} P_{t}\hspace{-2mm} & \hspace{-2mm}\bar{P}_{t}^{(1)} \hspace{-2mm}\\
   \hspace{-2mm} \bar{P}_{t}^{(2)}    \hspace{-2mm}            & \hspace{-2mm}\bar{P}_{t}^{(3)}\hspace{-2mm}\\
  \end{array}
\hspace{-1mm}\right]\hspace{-1mm}\left\{\hspace{-1mm}\left[
  \begin{array}{cc}
   \hspace{-2mm} C\hspace{-2mm} & \hspace{-2mm}\bar{C} \hspace{-2mm}\\
   \hspace{-2mm} 0    \hspace{-2mm}            & \hspace{-2mm}0\hspace{-2mm}\\
  \end{array}
\hspace{-2mm}\right]\hspace{-1mm}\left[
  \begin{array}{cc}
   \hspace{-2mm}  x_{t} \hspace{-2mm}\\
    \hspace{-2mm} Ex_{t}\hspace{-2mm}
  \end{array}
\hspace{-2mm}\right]\hspace{-1mm}+\hspace{-1mm}\left[
  \begin{array}{cc}
   \hspace{-2mm}  D\hspace{-2mm}\\
    \hspace{-2mm}0\hspace{-2mm}
  \end{array}
\hspace{-2mm}\right] u_{t}\hspace{-1mm}+\hspace{-1mm}\left[
  \begin{array}{cc}
 \hspace{-2mm}\bar{D} \\
  \hspace{-2mm}0\\
  \end{array}
\hspace{-2mm}\right]Eu_{t}\hspace{-1mm}\right\}+\Theta_t^2.
\end{align}

Plugging \eqref{ptt} and \eqref{qt} into \eqref{uu}, and noting that $x_{t}$ is $\mathcal{F}_{t}$-adapted, we have
\begin{align}\label{4u}
0&=Ru_{t}+\bar{R}Eu_{t}+B'P_{t}x_{t}\hspace{-1mm}+\hspace{-1mm}
B'\bar{P}_{t}^{(1)}Ex_{t}\hspace{-1mm}\notag\\
&+\hspace{-1mm}D'P_{t}Cx_{t}+D'P_{t}\bar{C}Ex_{t}+D'P_{t}Du_{t}+D'P_{t}\bar{D}Eu_{t}\notag\\
&+[\bar{B}'P_{t}+(B+\bar{B})'(\bar{P}_{t}^{(2)}+\bar{P}_{t}^{(3)})+\bar{B}\bar{P}_{t}^{(1)}]Ex_{t}\notag\\
&+(\bar{D}'P_{t}C+\bar{D}'P_{t}\bar{C})Ex_{t}
+(\bar{D}'P_{t}D+\bar{D}'P_{t}\bar{D})Eu_{t}\notag\\
&+\mathcal{B}'\Theta_t+\mathcal{D}'\Theta_t^2+\bar{\mathcal{B}}'E \Theta_t
+\bar{\mathcal{D}}'E \Theta_t^2,
\end{align}
where $\mathcal{B}, \mathcal{D}, \bar{\mathcal{B}}, \bar{\mathcal{D}}$ are given in \eqref{label}.

By letting $\bar{P}_{t}=\bar{P}_{t}^{(1)}+\bar{P}_{t}^{(2)}+\bar{P}_{t}^{(3)}$, \eqref{4u} can also be presented as
\begin{align}\label{u}
0&=\Upsilon_{t}^{(1)}u_{t}\hspace{-1mm}+\hspace{-1mm}[\Upsilon_{t}^{(2)}\hspace{-1mm}-\hspace{-1mm}\Upsilon_{t}^{(1)}]Eu_{t}
+M_{t}^{(1)}x_{t}+[M_{t}^{(2)}\hspace{-1mm}-\hspace{-1mm}M_{t}^{(1)}]Ex_{t}\notag\\
&+\mathcal{B}'\Theta_t+\mathcal{D}'\Theta_t^2+\bar{\mathcal{B}}'E \Theta_t+\bar{\mathcal{D}}'E \Theta_t^2.
\end{align}

Taking expectation on both sides of \eqref{u}, there holds that
\begin{align}\label{ey}
  0 & =\Upsilon_{t}^{(2)}Eu_t\hspace{-1mm}+\hspace{-1mm}M_{t}^{(2)}Ex_t\hspace{-1mm}+\hspace{-1mm}(\mathcal{B}\hspace{-1mm}+\hspace{-1mm}
  \bar{\mathcal{B}})'E\Theta_t\hspace{-1mm}+\hspace{-1mm}(\mathcal{D}\hspace{-1mm}+\hspace{-1mm}\bar{\mathcal{D}})'E\Theta_t^2.
\end{align}

If \emph{Problem 1} is solvable, and \eqref{lkj}-\eqref{barp} is regular, i.e., $\Upsilon_t^{(2)}[\Upsilon_t^{(2)}]^{\dag}M_t^{(2)}=M_t^{(2)}$, then from \eqref{ey} we have
\begin{align}\label{thtt}
 \{I-\Upsilon_t^{(2)}[\Upsilon_t^{(2)}]^{\dag}\} [(\mathcal{B}+\bar{\mathcal{B}})'E\Theta_t+(\mathcal{D}+\bar{\mathcal{D}})'E\Theta_t^2]=0.
\end{align}

 $Eu_t$ can be calculated from \eqref{ey} as
\begin{align}\label{uuu2}
  Eu_t&=-[\Upsilon_t^{(2)}]^{\dag}M_t^{(2)}Ex_t+\bar{\mathbf{L}}-[\Upsilon_t^{(2)}]^{\dag} \{(\mathcal{B}+\bar{\mathcal{B}})'E\Theta_t\notag\\
  &~~+(\mathcal{D}+\bar{\mathcal{D}})'E\Theta_t^2\},\notag\\
\mathbf{\bar{L}}&=\bar{z}-[\Upsilon_t^{(2)}]^{\dag}\Upsilon_t^{(2)}\bar{z}.
\end{align}
where $\bar{z}\in \mathcal{R}^m$ is arbitrary deterministic constant.

Also note \eqref{lkj}-\eqref{barp} is regular (i.e., $\Upsilon_t^{(1)}[\Upsilon_t^{(1)}]^{\dag}M_t^{(1)}=M_t^{(1)}$), then using \eqref{uuu2} we have
\begin{align}\label{uuu}
  u_t & \hspace{-1mm} =\mathcal{K}_t x_t+\bar{\mathcal{K}}_tEx_t+\mathbf{L}+\mathbf{\bar{L}}\notag\\
  &-[\Upsilon_t^{(1)}]^\dag [\mathcal{B}'\Theta_t+\mathcal{D}'\Theta_t^2+\bar{\mathcal{B}}'E \Theta_t+\bar{\mathcal{D}}'E \Theta_t^2]\notag\\
  &-[\Upsilon_t^{(2)}]^\dag[(\mathcal{B}+\bar{\mathcal{B}})'E\Theta_t+(\mathcal{D}+\bar{\mathcal{D}})'E\Theta_t^2],\notag\\
\mathbf{L} &=z-[\Upsilon_t^{(1)}]^{\dag}\Upsilon_t^{(1)}z,
\end{align}
where $z\in \mathcal{R}^m$ is arbitrary, and $\mathcal{K}_t, \bar{\mathcal{K}}_t$ satisfy
 \begin{align}
   \mathcal{K}_t & =-[\Upsilon_t^{(1)}]^{\dag}M_t^{(1)},\label{mark1}\\
    \bar{\mathcal{K}}_t &=-\Big\{[\Upsilon_t^{(2)}]^{\dag}M_t^{(2)}
  -[\Upsilon_t^{(1)}]^{\dag}M_t^{(1)}\Big\}.\label{marks}
 \end{align}
Meanwhile, we can obtain
\begin{align}\label{tht1}
  \{I-\Upsilon_t^{(1)}[\Upsilon_t^{(1)}]^{\dag}\}( \mathcal{B}'\Theta_t+\mathcal{D}'\Theta_t^2+\bar{\mathcal{B}}'E \Theta_t+\bar{\mathcal{D}}'E \Theta_t^2)=0.
\end{align}

%
%

Furthermore, comparing the $dt$ term in \eqref{ito} with that in \eqref{bsde} and noting \eqref{ptt} and \eqref{qt}, we can obtain
\begin{align}\label{p1}
&~~\left[
  \begin{array}{cc}
   \hspace{-2mm} P_{t}\hspace{-2mm} & \hspace{-2mm}\bar{P}_{t}^{(1)} \\
   \hspace{-2mm} \bar{P}_{t}^{(2)}\hspace{-2mm}& \hspace{-2mm}\bar{P}_{t}^{(3)}\\
  \end{array}
\hspace{-2mm}\right]\hspace{-1mm}\Bigg\{\hspace{-1mm}\left[
  \begin{array}{cc}
   \hspace{-2mm} A\hspace{-2mm} & \hspace{-2mm}\bar{A} \\
   \hspace{-2mm} 0\hspace{-2mm}& \hspace{-2mm}A\hspace{-1mm}+\hspace{-1mm}\bar{A}\\
  \end{array}
\hspace{-2mm}\right]\hspace{-1mm}\left[
  \begin{array}{cc}
   \hspace{-2mm}  x_{t} \hspace{-2mm}\\
    \hspace{-2mm}Ex_{t}\hspace{-2mm}
  \end{array}
\hspace{-2mm}\right]\hspace{-1mm}+\hspace{-1mm}\left[
  \begin{array}{cc}
   \hspace{-2mm}  B\hspace{-2mm}\\
    \hspace{-2mm}0\hspace{-2mm}
  \end{array}
\hspace{-2mm}\right] u_{t}\hspace{-1mm}+\hspace{-1mm}\left[
  \begin{array}{cc}
 \hspace{-2mm}\bar{B} \\
  \hspace{-2mm}B\hspace{-1mm}+\hspace{-1mm}\bar{B}\\
  \end{array}
\hspace{-2mm}\right] Eu_{t}\Bigg\}\hspace{-1mm}\notag\\
&+\hspace{-1mm}\left[
  \begin{array}{cc}
   \hspace{-2mm} \dot{P}_{t}\hspace{-2mm} & \hspace{-2mm}\dot{\bar{P}}_{t}^{(1)} \\
   \hspace{-2mm} \dot{\bar{P}}_{t}^{(2)}\hspace{-2mm}& \hspace{-2mm}\dot{\bar{P}}_{t}^{(3)}\\
  \end{array}
\hspace{-2mm}\right]\left[
  \begin{array}{cc}
   \hspace{-2mm}  x_{t}\hspace{-2mm}\\
    \hspace{-2mm}Ex_{t}\hspace{-2mm}
  \end{array}
\hspace{-2mm}\right]\hspace{-1mm}+\hspace{-1mm}\Theta_t^1\notag\\
&=-\Bigg\{\left[
  \begin{array}{cc}
   \hspace{-2mm} A\hspace{-2mm} & \hspace{-2mm}\bar{A} \\
   \hspace{-2mm} 0    \hspace{-2mm}            & \hspace{-2mm}A+\bar{A}\\
  \end{array}
\hspace{-2mm}\right]'\hspace{-1mm}\left[
  \begin{array}{cc}
   \hspace{-2mm} P_{t}\hspace{-2mm} & \hspace{-2mm}\bar{P}_{t}^{(1)}\hspace{-1mm} \\
   \hspace{-2mm} \bar{P}_{t}^{(2)}    \hspace{-2mm}            & \hspace{-2mm}\bar{P}_{t}^{(3)}\hspace{-1mm}\\
  \end{array}
\hspace{-2mm}\right]\hspace{-1mm}\left[
  \begin{array}{cc}
   \hspace{-2mm}  x_{t}\hspace{-2mm}\\
    \hspace{-2mm}Ex_{t}\hspace{-2mm}
  \end{array}
\hspace{-2mm}\right]\hspace{-1mm}+\hspace{-1mm}\left[
  \begin{array}{cc}
   \hspace{-2mm} C\hspace{-2mm} & \hspace{-2mm}\bar{C}\hspace{-1mm} \\
   \hspace{-2mm} 0    \hspace{-2mm}            & \hspace{-2mm}0\hspace{-1mm}\\
  \end{array}
\hspace{-2mm}\right]'\left[
  \begin{array}{cc}
   \hspace{-2mm} P_{t}\hspace{-2mm} & \hspace{-2mm}\bar{P}_{t}^{(1)} \hspace{-1mm}\\
   \hspace{-2mm} \bar{P}_{t}^{(2)}    \hspace{-2mm}            & \hspace{-2mm}\bar{P}_{t}^{(3)}\hspace{-1mm}\\
  \end{array}
\hspace{-2mm}\right]\notag\\
&\times\Big\{\left[
  \begin{array}{cc}
   \hspace{-2mm} C\hspace{-2mm} & \hspace{-2mm}\bar{C} \\
   \hspace{-2mm} 0\hspace{-2mm} & \hspace{-2mm}0\\
  \end{array}
\hspace{-2mm}\right]\hspace{-1mm}\left[
  \begin{array}{cc}
   \hspace{-2mm}  x_{t} \hspace{-2mm}\\
    \hspace{-2mm} Ex_{t}\hspace{-2mm}
  \end{array}
\hspace{-2mm}\right]\hspace{-1mm}+\hspace{-1mm}\left[
  \begin{array}{cc}
   \hspace{-2mm}  D\hspace{-2mm}\\
    \hspace{-2mm}0\hspace{-2mm}
  \end{array}
\hspace{-2mm}\right] u_{t}\hspace{-1mm}+\hspace{-1mm}\left[
  \begin{array}{cc}
 \hspace{-2mm}\bar{D} \\
  \hspace{-2mm}0\\
  \end{array}
\hspace{-2mm}\right]\hspace{-1mm} Eu_{t}\Big\}\hspace{-1mm}+\hspace{-1mm}\left[
  \begin{array}{cc}
   \hspace{-2mm}  I_{n}\hspace{-2mm}\\
    \hspace{-2mm}0\hspace{-2mm}
  \end{array}
\hspace{-2mm}\right](Qx_{t}\hspace{-1mm}+\hspace{-1mm}\bar{Q}Ex_{t})\Bigg\}.
\end{align}

Notice $u_{t}$, $Eu_{t}$ are given by \eqref{uuu}, \eqref{uuu2}, the following relationship can be obtained
\begin{align}
-\dot{\bar{P}}_{t}^{(1)}&=\bar{Q}+P_{t}\bar{A}+A'\bar{P}_{t}^{(1)}+\bar{P}_{t}^{(1)}(A+\bar{A})+C'P_{t}\bar{C}\notag\\
&+(P_{t}B+C'P_{t}D)\bar{\mathcal{K}}_{t}\hspace{-1mm}+\hspace{-1mm}C'P_{t}\bar{D}(\mathcal{K}_{t}+\bar{\mathcal{K}}_{t})\notag\\
&+[P_{t}\bar{B}+\bar{P}_{t}^{(1)}(B+\bar{B})](\mathcal{K}_{t}+\bar{\mathcal{K}}_{t}),\label{bart1}\\
-\dot{\bar{P}}_{t}^{(2)}&=\bar{P}_{t}^{(2)}A+\bar{A}'P_{t}+(A+\bar{A})'\bar{P}_{t}^{(2)}
+\bar{C}'P_{t}C\notag\\
&+[\bar{P}_{t}^{(2)}B+\bar{C}'P_{t}D]\mathcal{K}_{t},\label{bart2}\\
-\dot{\bar{P}}_{t}^{(3)}&=\bar{P}_{t}^{(2)}\bar{A}+\bar{P}_{t}^{(3)}(A+\bar{A})+\bar{A}'\bar{P}_{t}^{(1)}
+(A+\bar{A})'\bar{P}_{t}^{(3)}\notag\\
&+\bar{C}'P_{t}\bar{C}+[\bar{P}_{t}^{(2)}B\hspace{-1mm}+\hspace{-1mm}\bar{C}'P_{t}D]\bar{\mathcal{K}}_{t}\notag\\
&+[\bar{C}'P_{t}\bar{D}\hspace{-1mm}+\hspace{-1mm}\bar{P}_{t}^{(2)}\bar{B}\hspace{-1mm}+\hspace{-1mm}\bar{P}_{t}^{(3)}(B\hspace{-1mm}
+\hspace{-1mm}\bar{B})](\mathcal{K}_{t}\hspace{-1mm}+\hspace{-1mm}\bar{\mathcal{K}}_{t}),\label{bart3}\\
0&=(P_tB+C'P_tD)\mathbf{L}+[(P_t+\bar{P}_t^{(1)})(B+\bar{B})\notag\\
&+C'P_tD+C'P_t\bar{D}]\mathbf{\bar{L}},\label{LL}\\
0&=[\bar{P}_t^{(2)}B+\bar{C}'P_tD]\mathbf{L}+[(\bar{P}_t^{(2)}+\bar{P}_t^{(3)})(B+\bar{B})\notag\\
&+\bar{C}'P_tD+\bar{C}'P_t\bar{D}]\mathbf{\bar{L}},\label{barll}\\
\Theta_t^1&=\mathcal{P}_t\Big\{\mathcal{B}[\Upsilon_t^{(1)}]^\dag (\mathcal{B}'\Theta_t+\mathcal{D}'\Theta_t^2+\bar{\mathcal{B}}'E \Theta_t+\bar{\mathcal{D}}'E \Theta_t^2)\notag\\
&+(\mathcal{B}\hspace{-1mm}+\hspace{-1mm}\bar{\mathcal{B}})[\Upsilon_t^{(2)}]^{\dag} [(\mathcal{B}\hspace{-1mm}+\hspace{-1mm}\bar{\mathcal{B}})'E\Theta_t\hspace{-1mm}+\hspace{-1mm}(\mathcal{D}\hspace{-1mm}+\hspace{-1mm}\bar{\mathcal{D}})'E\Theta_t^2]\Big\},\label{combins}
\end{align}
with final condition $\bar{P}_{T}^{(1)}=\bar{P}_{T}$, $\bar{P}_{T}^{(2)}=\bar{P}_{T}^{(3)}=0$ and $\mathcal{P}_t=\left[
  \begin{array}{cc}
   \hspace{-2mm} P_{t}\hspace{-2mm} & \hspace{-2mm}\bar{P}_{t}^{(1)} \\
   \hspace{-2mm} \bar{P}_{t}^{(2)}    \hspace{-2mm}            & \hspace{-2mm}\bar{P}_{t}^{(3)}\\
  \end{array}
\hspace{-2mm}\right]$.

Taking summation on both sides of \eqref{bart1}-\eqref{bart3}, we know $\bar{P}_{t}=\bar{P}_{t}^{(1)}+\bar{P}_{t}^{(2)}+\bar{P}_{t}^{(3)}$ satisfies \eqref{barp}. Moreover, \eqref{p1} also indicates that $P_t$ satisfies \eqref{lkj}.

On the other hand, from \eqref{combins} we can obtain
\begin{align}\label{simpl}
  &\left[
  \begin{array}{cc}
   \hspace{-2mm}  \Theta_t^1\hspace{-2mm}\\
    \hspace{-2mm}E \Theta_t^1\hspace{-2mm}
  \end{array}
\hspace{-2mm}\right]  =\mathbf{B}_t\left[
  \begin{array}{cc}
   \hspace{-2mm}  \Theta_t\hspace{-2mm}\\
    \hspace{-2mm}E \Theta_t\hspace{-2mm}
  \end{array}
\hspace{-2mm}\right]+\mathbf{D}_t\left[
  \begin{array}{cc}
   \hspace{-2mm}  \Theta_t^2\hspace{-2mm}\\
    \hspace{-2mm}E \Theta_t^2\hspace{-2mm}
  \end{array}
\hspace{-2mm}\right],\\
&\mathbf{B}_t\hspace{-1mm}=\hspace{-2mm}\left[
  \begin{array}{cc}
   \hspace{-2mm} \mathcal{P}_t\mathcal{B}[\Upsilon_t^{(\hspace{-.3mm}1\hspace{-.3mm})}]^\dag \mathcal{B}'\hspace{-2mm} & \hspace{-2mm}\mathcal{P}_t\mathcal{B}[\Upsilon_t^{(\hspace{-.3mm}1\hspace{-.3mm})}]^\dag \bar{\mathcal{B}}' \\
   \hspace{-2mm}0    \hspace{-3mm}& \hspace{-3mm} \mathcal{P}_t\{\mathcal{B}[\Upsilon_t^{(1)}]^\dag (\mathcal{B}\hspace{-1.3mm}+\hspace{-1.3mm}\bar{\mathcal{B}})'\hspace{-1.2mm}+\hspace{-1.2mm}
   (\mathcal{B}\hspace{-1.3mm}+\hspace{-1.3mm}\bar{\mathcal{B}})[\Upsilon_t^{(2)}]^{\dag} (\mathcal{B}\hspace{-1.3mm}+\hspace{-1.3mm}\bar{\mathcal{B}})'\hspace{-.5mm}\}\\
  \end{array}
\hspace{-3mm}\right]\notag\\
&\mathbf{D}_t\hspace{-1.2mm}=\hspace{-2.2mm}\left[
  \begin{array}{cc}
   \hspace{-2mm}\mathcal{P}_t \mathcal{B}[\Upsilon_t^{(\hspace{-.3mm}1\hspace{-.3mm})}]^\dag \hspace{-.5mm}\mathcal{D}'\hspace{-3mm} & \hspace{-3mm}\mathcal{P}_t\mathcal{B}[\Upsilon_t^{(\hspace{-.3mm}1\hspace{-.3mm})}]^\dag \hspace{-.5mm}\bar{\mathcal{D}}'\hspace{-2mm} \\
   \hspace{-2mm}0    \hspace{-3mm}& \hspace{-3mm} \mathcal{P}_t\{\mathcal{B}[\Upsilon_t^{(1)}]^\dag \hspace{-.5mm} (\mathcal{D}\hspace{-1.3mm}+\hspace{-1.3mm}\bar{\mathcal{D}})'\hspace{-1.2mm}+\hspace{-1.2mm}
   (\mathcal{B}\hspace{-1.3mm}+\hspace{-1.3mm}\bar{\mathcal{B}})[\Upsilon_t^{(2)}]^{\dag} \hspace{-.5mm} (\mathcal{D}\hspace{-1.3mm}+\hspace{-1.3mm}\bar{\mathcal{D}})'\hspace{-.5mm}\}\hspace{-2mm}\\
  \end{array}
\hspace{-3mm}\right]\notag
\end{align}

From  \eqref{Theta}, we can obtain
\begin{align}\label{simpl2}
 d \left[
  \begin{array}{cc}
   \hspace{-2mm}  \Theta_t\hspace{-2mm}\\
    \hspace{-2mm}E \Theta_t\hspace{-2mm}
  \end{array}
\hspace{-2mm}\right] & =\Big\{\mathbf{B}_t\left[
  \begin{array}{cc}
   \hspace{-2mm}  \Theta_t\hspace{-2mm}\\
    \hspace{-2mm}E \Theta_t\hspace{-2mm}
  \end{array}
\hspace{-2mm}\right]\hspace{-1mm}+\hspace{-1mm}\mathbf{D}_t\left[
  \begin{array}{cc}
   \hspace{-2mm}  \Theta_t^2\hspace{-2mm}\\
    \hspace{-2mm}E \Theta_t^2\hspace{-2mm}
  \end{array}
\hspace{-2mm}\right]\Big\}dt\hspace{-1mm}+\hspace{-1mm}\left[
  \begin{array}{cc}
   \hspace{-2mm}  \Theta_t^2\hspace{-2mm}\\
    \hspace{-2mm}0\hspace{-2mm}
  \end{array}
\hspace{-2mm}\right]dW_t,
\end{align}
where $dE\Theta_t=E\Theta_t^1dt$ has been inserted and $\Theta_T=E\Theta_T=0$.

Since linear BSDE \eqref{simpl2} satisfies the Lipschitz conditions with linear growth, then it can be verified from \eqref{simpl2} that $(\Theta_t, \Theta_t^2)=(0,0)$ for $t\in[0,T]$ is the unique solution to BSDE \eqref{simpl2}, see \cite{yongj}. Therefore, the solution to FBSDE \eqref{ps1} and \eqref{bsde} is given by \eqref{pt}.

%

In what follows, applying It\^{o}'s formula to $p_{t}'\left[
  \begin{array}{cc}
   \hspace{-1mm} x_{t} \hspace{-1mm}\\
   \hspace{-1mm} Ex_{t}\hspace{-1mm}\\
  \end{array}
\hspace{-1mm}\right]$, we have that
\begin{align}\label{p}
&dp_{t}'\left[
  \begin{array}{cc}
   \hspace{-1mm} x_{t} \hspace{-1mm}\\
   \hspace{-1mm} Ex_{t}\hspace{-1mm}\\
  \end{array}
\hspace{-1mm}\right]=d\left[
  \begin{array}{cc}
   \hspace{-1mm} x_{t} \hspace{-1mm}\\
   \hspace{-1mm} Ex_{t}\hspace{-1mm}\\
  \end{array}
\hspace{-1mm}\right]'\left[
  \begin{array}{cc}
   \hspace{-1mm} P_{t}\hspace{-1mm} & \hspace{-1mm}\bar{P}_{t}^{(1)} \\
   \hspace{-1mm} \bar{P}_{t}^{(2)}    \hspace{-1mm}            & \hspace{-1mm}\bar{P}_{t}^{(3)}\\
  \end{array}
\hspace{-1mm}\right]\left[
  \begin{array}{cc}
   \hspace{-1mm} x_{t} \hspace{-1mm}\\
   \hspace{-1mm} Ex_{t}\hspace{-1mm}\\
  \end{array}
\hspace{-1mm}\right]\\
&=d(x_{t}'P_{t}x_{t}+Ex_{t}'\bar{P}_{t}^{(2)}x_{t}+x_{t}'\bar{P}_{t}^{(1)}Ex_{t}+Ex_{t}'\bar{P}_{t}^{(3)}Ex_{t}).\notag
\end{align}

From system dynamics \eqref{ps1}, \eqref{ps20} and \eqref{pt} and noting $\bar{P}_{t}=\bar{P}_{t}^{(1)}+\bar{P}_{t}^{(2)}+\bar{P}_{t}^{(3)}$, taking the integral from $0$ to $T$ and then the expectation on both sides of \eqref{p}, we can obtain
\begin{align}\label{ppp}
&~~E\int_{0}^{T}d(x_{t}'P_{t}x_{t}+Ex_{t}'\bar{P}_{t}^{(2)}x_{t}\hspace{-1mm}
+\hspace{-1mm}x_{t}'\bar{P}_{t}^{(1)}Ex_{t}\hspace{-1mm}+\hspace{-1mm}Ex_{t}'\bar{P}_{t}^{(3)}Ex_{t})\notag\\
&=E(x_{T}'P_{T}x_{T})\hspace{-1mm}+\hspace{-1mm}Ex_{T}'\bar{P}_{T}Ex_{T}\hspace{-1mm}-\hspace{-1mm}
[E(x_{0}'P_{0}x_{0})\hspace{-1mm}+\hspace{-1mm}Ex_{0}'\bar{P}_{0}Ex_{0}]\notag\\
&=E\int_{0}^{T}d(x_{t}'P_{t}x_{t}+Ex_{t}'\bar{P}_{t}Ex_{t}).
\end{align}

Similar to the lines of \eqref{pp}, using \eqref{lkj}-\eqref{barp}, $J_{T}$ in \eqref{ps2} can be calculated as
\begin{align}\label{cost3}
&J_{T}=E(x_{0}'P_{0}x_{0})+Ex_{0}'\bar{P}_{0}Ex_{0}\\
&+E\int_{0}^{T}[u_{t}-Eu_{t}-\mathcal{K}_{t}(x_{t}-Ex_{t})]'\Upsilon_{t}^{(1)}\notag\\
&\times [u_{t}-Eu_{t}-\mathcal{K}_{t}(x_{t}-Ex_{t})]dt\notag\\
&+E\int_{0}^{T}[Eu_{t}-(\mathcal{K}_{t}\hspace{-1mm}+\hspace{-1mm}\bar{\mathcal{K}}_{t})Ex_{t}]'\Upsilon_{t}^{(2)}
[Eu_{t}\hspace{-1mm}-\hspace{-1mm}(\mathcal{K}_{t}\hspace{-1mm}+\hspace{-1mm}\bar{\mathcal{K}}_{t})Ex_{t}]dt,
\notag\end{align}
where $\mathcal{K}_t,\bar{\mathcal{K}}_t$ are given by \eqref{mark1}, \eqref{marks}.

Next, we will show $\Upsilon_t^{(1)}\geq 0$ and $\Upsilon_t^{(2)}\geq 0$.  



Actually, we choose $\lambda_{t}^{(1)}$ be any fixed eigenvalue of $\Upsilon_{t}^{(1)}$ in \eqref{upsi1} and $\lambda_{t}^{(2)}$ be any fixed eigenvalue of $\Upsilon_{t}^{(2)}$ in \eqref{upsi2}. We will show $\mathcal{M}(\{t\in[0,T]|\lambda_{t}^{(1)}<0\})=0$ and $\mathcal{M}(\{t\in[0,T]|\lambda_{t}^{(2)}<0\})=0$, where $\mathcal{M}$ is the Lebesgue measure. Choose $v_{\lambda}^{(1)}$, $v_{\lambda}^{(2)}$ be unit eigenvector associated with $\lambda_{t}^{(1)}$ and $\lambda_{t}^{(2)}$ satisfying $[v_{\lambda}^{(1)}]'v_{\lambda}^{(1)}=1$ and $[v_{\lambda}^{(2)}]'v_{\lambda}^{(2)}=1$, respectively.

Define $\mathcal{I}^{(1)}_{l}$, $\mathcal{I}^{(2)}_{l}$ be the indicator function of the set $\{t\in[0,T]|\lambda_{t}^{(1)}<-\frac{1}{l}\}$, $\{t\in[0,T]|\lambda_{t}^{(2)}<-\frac{1}{l}\}, l=1,2,\cdots$, respectively. Thus we can obtain that
\begin{align}\label{lam1}
 |\lambda_{t}^{(i)}|^{-1}\mathcal{I}^{(i)}_{l}\Upsilon_{t}^{(i)}v_{\lambda}^{(i)}&=-\mathcal{I}^{(i)}_{l}v_{\lambda}^{(i)}, i=1,2.
\end{align}

Choose a fixed constant $\delta\in \mathcal{R}$, set
\begin{align}\label{set}
u_t=\mathcal{L}_t(x_t-Ex_t)+(\mathcal{L}_t+\bar{\mathcal{L}}_t)Ex_t,\end{align}
 with $\mathcal{L}_t, \bar{\mathcal{L}}_t$ designed as follows,
\begin{equation}\label{con1}
\mathcal{L}_{t}=\left\{ \begin{array}{ll}
0, ~~~~~~~~~~~~~~~~~~~~~~~~~~~~~~~~~~~\text{if} \lambda_{t}^{(1)}=0,\\
\frac{\delta \mathcal{I}^{(1)}_{l}}{|\lambda_{t}^{(1)}|^{1/2}}v_{\lambda}^{(1)}+\mathcal{K}_{t},~~~~~~~~~~~~~~~\text{if} \lambda_{t}^{(1)}\neq0,\\
\end{array} \right.
\end{equation}
\begin{equation}\label{con2}
\mathcal{L}_{t}+\bar{\mathcal{L}}_{t}=\left\{ \begin{array}{ll}
0, ~~~~~~~~~~~~~~~~~~~~~~~~~~~~~~\text{if} ~\lambda_{t}^{(2)}=0,\\
\frac{\delta \mathcal{I}^{(2)}_{l}}{|\lambda_{t}^{(2)}|^{1/2}}v_{\lambda}^{(2)}+\mathcal{K}_{t}+\bar{\mathcal{K}}_{t},~~~\text{if} ~\lambda_{t}^{(2)}\neq0,\\
\end{array} \right.
\end{equation}
and $\mathcal{K}_t, \bar{\mathcal{K}}_t$ are as in \eqref{mark1}, \eqref{marks}.

Using the controller designed in \eqref{set}-\eqref{con2}, and noting \eqref{mark1}-\eqref{marks} and \eqref{lam1}, thus $J_{T}$ can be calculated from \eqref{cost3} as
\begin{align}\label{con3}
J_{T}&=E(x_{0}'P_{0}x_{0})+Ex_{0}'\bar{P}_{0}Ex_{0}\\
&\hspace{-1mm}-\hspace{-1mm}\delta^{2}\hspace{-1mm}\int_{\mathcal{I}^{(1)}_{l}}E[(x_{t}\hspace{-1mm}-\hspace{-1mm}Ex_{t})'(x_{t}\hspace{-1mm}-\hspace{-1mm}Ex_{t})]dt\hspace{-1mm}
-\hspace{-1mm}\delta^{2}\hspace{-1mm}\int_{\mathcal{I}^{(2)}_{l}}Ex_{t}'Ex_{t}dt.\notag
\end{align}

Without loss of generality, we might as well assume that $x_{t}-Ex_{t}\neq 0$ in the set $\{t\in[0,T]|\lambda_{t}^{(1)}<-\frac{1}{l}\}$ and $Ex_{t}\neq 0$ in the set $\{t\in[0,T]|\lambda_{t}^{(2)}<-\frac{1}{l}\}$. Thus we have $\int_{\mathcal{I}^{(1)}_{l}}E[(x_{t}-Ex_{t})'(x_{t}-Ex_{t})]dt>0$ and $\int_{\mathcal{I}^{(2)}_{l}}Ex_{t}'Ex_{t}dt>0$.

By Assumption 1, obviously we have $J_{T}\geq 0$. While, for \eqref{con3}, if $\mathcal{M}\left[\mathcal{I}^{(1)}_{l}\right]>0$, by letting $\delta\rightarrow \infty$, we have $J_{T}\rightarrow -\infty$, which is a contradiction with $J_{T}\geq 0$. On the other hand, if $\mathcal{M}\left[\mathcal{I}^{(2)}_{l}\right]>0$, also by letting $\delta\rightarrow \infty$, we have $J_{T}\rightarrow -\infty$, which is a contradiction with $J_{T}\geq 0$. Hence, we have $\mathcal{M}\left[\mathcal{I}^{(1)}_{l}\right]=0$ and $\mathcal{M}\left[\mathcal{I}^{(2)}_{l}\right]=0$.

Notice that
\begin{align*}
\{t\in[0,T]|\lambda_{t}^{(i)}<0\}&=\bigcup_{l=1}^{\infty} \left\{ t\in[0,T]|\lambda_{t}^{(i)}<-\frac{1}{l}\right\}, i=1,2.
\end{align*}

Thus we can conclude that $\mathcal{M}(\{t\in[0,T]|\lambda_{t}^{(1)}<0\})=0$ and $\mathcal{M}(\{t\in[0,T]|\lambda_{t}^{(2)}<0\})=0$, i.e., $\Upsilon_{t}^{(1)}\geq 0$ and $\Upsilon_{t}^{(2)}\geq 0$.

Finally, we will show that $\Upsilon_t^{(1)}$ and $\Upsilon_t^{(2)}$ are invertible for $t\in[0,T]$.

Actually, if this is not the case, i.e., $\Upsilon_{t}^{(1)}, \Upsilon_{t}^{(2)}$ are assumed to be singular for $t\in[0,T]$. Notice $\Upsilon_{t}^{(1)}\geq 0$ and $\Upsilon_{t}^{(2)}\geq 0$, from \eqref{cost3} it can be obtained that
\begin{align}\label{cost4}
J_{T}&\geq E(x_{0}'P_{0}x_{0})+Ex_{0}'\bar{P}_{0}Ex_{0}.
\end{align}
On the other hand, from \eqref{uuu2}-\eqref{uuu} we can choose
\begin{align*}
  u_t^{(1)} & =\mathcal{K}_tx_t+\bar{\mathcal{K}}_tEx_t,u_t^{(2)} \hspace{-1mm} =\mathcal{K}_t x_t+\bar{\mathcal{K}}_tEx_t+\mathbf{\bar{L}},
\end{align*}
where $\bar{\mathbf{L}}$ is defined in \eqref{uuu2}, and $\bar{z}\neq 0$, i.e., $u_t^{(1)}\neq u_t^{(2)}$. In this case, $Eu_t^{(2)}=(\mathcal{K}_t+\bar{\mathcal{K}}_t)Ex_t+\mathbf{\bar{L}}$.

Noting the facts that
\begin{align}\label{fac}
  \Upsilon_t^{(2)}\bar{\mathbf{L}} & = \Upsilon_t^{(2)}\{\bar{z}-[\Upsilon_t^{(2)}]^{\dag}\Upsilon_t^{(2)}\bar{z}\}=0.
\end{align}
Thus, substituting $u_t^{(1)}$ and $u_t^{(2)}$ into \eqref{cost3}, respectively, we know the cost function $J_T$ can be calculated as the optimal cost function, i.e.,
\begin{align}\label{tw}
J_T(u_t^{(1)})=J_T(u_t^{(2)})=E(x_{0}'P_{0}x_{0})+Ex_{0}'\bar{P}_{0}Ex_{0}.
\end{align}
 Notice $u_t^{(1)}\neq u_t^{(2)}$, \eqref{tw} indicates that both $u_t^{(1)}$ and $u_t^{(2)}$ are the optimal controller, which contradicts with the unique solvability of \emph{Problem 1}. Therefore, the nonsingular of $\Upsilon_t^{(1)}$ and $\Upsilon_t^{(2)}$ can be proved. Combining with $\Upsilon_{t}^{(1)}\geq 0$ and $\Upsilon_{t}^{(2)}\geq 0$, we can conclude that if \emph{Problem 1} has a unique solution, then $\Upsilon_{t}^{(1)}> 0$ and $\Upsilon_{t}^{(2)}> 0$ for $t\in[0,T]$.

 Therefore, $[\Upsilon_{t}^{(1)}]^\dag, [\Upsilon_{t}^{(2)}]^{\dag}$ in \eqref{mark1}-\eqref{marks}, \eqref{bart1}-\eqref{bart3}, \eqref{barp}, \eqref{lkj} can be replaced by $[\Upsilon_{t}^{(1)}]^{-1}$, $[\Upsilon_{t}^{(2)}]^{-1}$, respectively. In other words, $P_t$, $\bar{P}_t$ given by \eqref{lkj}, \eqref{barp} are the coupled Riccati equation \eqref{Ric1}, \eqref{Ric2}; $\mathbf{L}=\mathbf{\bar{L}}=0$, the relationship \eqref{LL}-\eqref{barll} are obviously satisfied; $\mathcal{K}_t, \bar{\mathcal{K}}_t$ in \eqref{mark1}, \eqref{marks} can be replaced by $K_t, \bar{K}_t$ in \eqref{kt}, \eqref{barkt}.

 In conclusion, the optimal controller can be given as \eqref{opti}. Furthermore, from \eqref{cost3} we know that the optimal cost function can be given by \eqref{op}. The proof is complete.
\end{proof}

\section*{Appendix C: Proof of Lemma \ref{lem3}}
\begin{proof}
From Lemma \ref{lem:lemma3}, we know under Assumption \ref{ass:ass2}, if Riccati equation \eqref{2-Ric1}-\eqref{2-Ric2} is solvable and the regular condition \eqref{refg} holds, then cost function \eqref{ps2} with the final condition $P_{T}(T)=\bar{P}_{T}(T)=0$ can be minimized by the optimal controller \eqref{2-opti}, the optimal cost function is given by \eqref{16}:
\begin{align}\label{op1}
J_{T}^{*}=E[x_{0}'P_{0}(T)x_{0}]+Ex_{0}'\bar{P}_{0}(T)Ex_{0}.
\end{align}
Moreover, with Assumption \ref{ass:ass2}, obviously we have $J_T\geq 0$ for any controller $u_t$, then the optimal cost function $J_T^*\geq 0$.

If the initial state $x_0$ is chosen to be any random variable satisfying $Ex_0=0$, from \eqref{op1} we have $E[x_{0}'P_{0}(T)x_{0}]\geq 0$, then $P_0(T)\geq 0$ can be obtained. On the other hand, suppose $x_0$ is deterministic (i.e., $x_0=Ex_0$), there holds from \eqref{op1} that $x_0'[P_{0}(T)+\bar{P}_{0}(T)]x_0\geq 0$, thus we have $P_{0}(T)+\bar{P}_{0}(T)\geq 0$ for any $T$.

Since the coefficient matrices in \eqref{2-Ric1}-\eqref{2-k2} are time-invariant, we have that
\begin{equation}\label{tina}
  P_{t}(T)=P_{0}(T-t),~\bar{P}_{t}(T)=\bar{P}_{0}(T-t),~t\in[0,T].
\end{equation}

Thus, we can conclude that under Assumption \ref{ass:ass2}, the solution to \eqref{2-Ric1}-\eqref{2-Ric2} satisfies $P_t(T)\geq 0$, $P_{t}(T)+\bar{P}_{t}(T)\geq 0$ for $t\in[0,T]$.
\end{proof}

\section*{Appendix D: Proof of Lemma \ref{ess}}
\begin{proof}
  1) For system \eqref{ps1} and \eqref{ps20} with controller $u_t=0$, the exact detectable of system $(A,\bar{A},C,\bar{C},\mathcal{Q}^{1/2})$ in Assumption \ref{ass:ass4} is equivalent to the exact detectable of the following system $(\mathbb{A}, \mathbb{C}, \mathcal{Q}^{1/2})$:
  \begin{equation}\label{fb-0}
 \left\{ \begin{array}{ll}
 d\mathbb{X}_{t}=\mathbb{A}\mathbb{X}_{t}dt+\mathbb{C}\mathbb{X}_{t}dW_{t},~~\mathbb{X}_{0},\\
\mathcal{Y}_{t}=\mathcal{Q}^{1/2}\mathbb{X}_{t},
\end{array} \right.
\end{equation}
where $\mathbb{A}\hspace{-1mm}=\hspace{-1mm}\left[\hspace{-2mm}
  \begin{array}{cc}
   A& 0\\
   0        & A+\bar{A}   \\
  \end{array}
\hspace{-2mm}\right]$, $\mathbb{C}\hspace{-1mm}=\hspace{-1mm}\left[\hspace{-2mm}
  \begin{array}{cc}
   C& C+\bar{C}\\
   0        & 0\\
  \end{array}
\hspace{-2mm}\right]$ and $\mathcal{Q}$ is as in \eqref{mf}.

While system \eqref{ps1} associated with \eqref{ps20} with controller \eqref{control}  can be rewritten as
\begin{align}\label{fb}
d\mathbb{X}_{t}&=\tilde{\mathbb{A}}\mathbb{X}_{t}dt+\tilde{\mathbb{C}}\mathbb{X}_{t}dW_{t},
\end{align}
where $\mathbb{X}_t, \tilde{\mathbb{A}}$ and $\tilde{\mathbb{C}}$ are given below \eqref{mf01}.

From the symbols given in \eqref{symlk} and \eqref{mf01}, we know
$$\tilde{\mathbb{A}}=\mathbb{A}+\mathbb{B}\mathbb{K}, \tilde{\mathbb{C}}=\mathbb{C}+\mathbb{D}\mathbb{K},\tilde{\mathcal{Q}}=\mathcal{Q}+\mathbb{K}'\mathbb{R}\mathbb{K}$$
where $\mathbb{B}\hspace{-1mm}=\hspace{-1mm}\left[\hspace{-2mm}
  \begin{array}{cc}
   B\hspace{-1mm}& \hspace{-1mm}0\\
   0       \hspace{-1mm} & \hspace{-1mm}B\hspace{-1mm}+\hspace{-1mm}\bar{B}   \\
  \end{array}
\hspace{-2mm}\right], \mathbb{D}\hspace{-1mm}=\hspace{-1mm}\left[\hspace{-2mm}
  \begin{array}{cc}
   D\hspace{-1mm}&\hspace{-1mm} D\hspace{-1mm}+\hspace{-1mm}\bar{D}\\
   0        \hspace{-1mm}&\hspace{-1mm} 0\\
  \end{array}
\hspace{-2mm}\right], \mathbb{K}=\left[\hspace{-2mm}
  \begin{array}{cc}
   \mathcal{K}\hspace{-1mm}&\hspace{-1mm} 0\\
   0       \hspace{-1mm} &\hspace{-1mm} \mathcal{K}\hspace{-1mm}+\hspace{-1mm}\bar{\mathcal{K}}   \\
  \end{array}
\hspace{-2mm}\right]$ and
$\mathbb{R}=\left[\hspace{-2mm}
  \begin{array}{cc}
   R\hspace{-1mm}&\hspace{-1mm} 0\\
   0       \hspace{-1mm} &\hspace{-1mm} R\hspace{-1mm}+\hspace{-1mm}\bar{R}   \\
  \end{array}
\hspace{-2mm}\right]$.

Following from \emph{Theorem 4} and \emph{Proposition 1} in \cite{zhangw}, we know that if the exact detectability of system $(\mathbb{A}, \mathbb{C}, \mathcal{Q}^{1/2})$, (i.e., $(A,\bar{A},C,\bar{C},\mathcal{Q}^{1/2})$), then system \eqref{mf01} $(\tilde{\mathbb{A}}, \tilde{\mathbb{C}}, \tilde{\mathcal{Q}}^{1/2})$ is exact detectable for any feedback gain $\mathcal{K}, \bar{\mathcal{K}}$.

2) Applying It\^{o}'s formula to $E(x_{t}'Px_{t})+Ex_{t}'\bar{P}Ex_{t}$ and taking integral from $0$ to $T$, similar to \eqref{pp}-\eqref{cost2}, there holds that
\begin{align}\label{eqas}
&~~E(x_{T}'Px_{T})+(Ex_{T})'\bar{P}Ex_{T}
\hspace{-1mm}-\hspace{-1mm}[E(x_{0}'Px_{0})\hspace{-1mm}+\hspace{-1mm}(Ex_{0})'\bar{P}Ex_{0}]\notag\\
&=E(\mathbb{X}_{T}'\mathbb{P}\mathbb{X}_{T})-E(\mathbb{X}_{0}'\mathbb{P}\mathbb{X}_{0})\notag\\
&=-\int_{0}^TE\Big\{x_{t}'Qx_{t}+Ex_{t}'\bar{Q}Ex_{t}+u_{t}'Ru_{t}+Eu_{t}'\bar{R}Eu_{t}\notag\\
&~~+[u_{t}\hspace{-1mm}-\hspace{-1mm}Eu_{t}\hspace{-1mm}-\hspace{-1mm}\mathcal{K}(x_{t}-Ex_{t})]'\Upsilon^{(1)}
[u_{t}-Eu_{t}-\mathcal{K}(x_{t}-Ex_{t})]\notag\\
&~~+[Eu_{t}\hspace{-1mm}-\hspace{-1mm}(\mathcal{K}+\bar{\mathcal{K}})Ex_{t}]'\Upsilon^{(2)}[Eu_{t}\hspace{-1mm}-\hspace{-1mm}(\mathcal{K}+\bar{\mathcal{K}})Ex_{t}]\Big\}dt\notag\\
&=-\int_{0}^TE[x_{t}'Qx_{t}+Ex_{t}'\bar{Q}Ex_{t}\hspace{-1mm}+\hspace{-1mm}u_{t}'Ru_{t}
\hspace{-1mm}+\hspace{-1mm}Eu_{t}'\bar{R}Eu_{t}]dt\notag\\
&=-\int_{0}^TE\{x_{t}'(Q+\mathcal{K}'R\mathcal{K})x_{t}+Ex_{t}'[\bar{Q}+\bar{\mathcal{K}}'R\mathcal{K}\notag\\
&~~+\mathcal{K}'R\bar{\mathcal{K}}+\bar{\mathcal{K}}'R\bar{\mathcal{K}}+(\mathcal{K}+\bar{\mathcal{K}})'\bar{R}(\mathcal{K}+\bar{\mathcal{K}})]Ex_{t}\}dt\notag\\
&=-\int_{0}^TE\{(x_{t}-Ex_{t})'(Q+\mathcal{K}'R\mathcal{K})(x_{t}-Ex_{t})\notag\\
&~~+Ex_{t}'[Q+\bar{Q}+(\mathcal{K}+\bar{\mathcal{K}})'(R+\bar{R})(\mathcal{K}+\bar{\mathcal{K}})]Ex_{t}\}dt\notag\\
&=-\int_{0}^TE(\mathbb{X}_{t}'\tilde{\mathcal{Q}}\mathbb{X}_{t})dt\leq 0,
\end{align}
where controller \eqref{control} has been inserted above.

Suppose $E(\mathbb{X}_{0}'\mathbb{P}\mathbb{X}_{0})=0$, it holds from \eqref{eqas} that
\begin{align}\label{lll}
&0\leq \int_{0}^{T}E(\mathbb{X}_{t}'\tilde{\mathcal{Q}}\mathbb{X}_{t})dt=-E(\mathbb{X}_{T}'\mathbb{P}\mathbb{X}_{T})\leq 0,\notag\\
&\Rightarrow \int_{0}^{T}E(\tilde{\mathcal{Y}}_{t}'\tilde{\mathcal{Y}}_{t})dt=\int_{0}^{T}E(\mathbb{X}_{t}'\tilde{\mathcal{Q}}\mathbb{X}_{t})dt=0.
\end{align}
i.e., $\tilde{\mathcal{Y}}_{t}=\tilde{\mathcal{Q}}^{1/2}\mathbb{X}_{t}=0$, $t\geq 0$, and $\mathbb{P}\geq 0$ has been used. Thus, $\mathbb{X}_{0}$ is an unobservable state of system $(\tilde{\mathbb{A}},\tilde{\mathbb{C}},\tilde{\mathcal{Q}}^{1/2})$.

Conversely, suppose $\mathbb{X}_{0}$ is an unobservable state of system $(\tilde{\mathbb{A}},\tilde{\mathbb{C}},\tilde{\mathcal{Q}}^{1/2})$, i.e., for any $t\geq 0$, we have $\tilde{\mathcal{Y}}_{t}=\tilde{\mathcal{Q}}^{1/2}\mathbb{X}_{t}\equiv 0$. From the exact detectability of $(\tilde{\mathbb{A}},\tilde{\mathbb{C}},\tilde{\mathcal{Q}}^{1/2})$, we have $\lim_{T\rightarrow +\infty}E(\mathbb{X}_{T}'\mathbb{P}\mathbb{X}_{T})=0$. Thus, it follows from \eqref{eqas} that
\begin{equation}\label{130}
  E(\mathbb{X}_{0}'\mathbb{P}\mathbb{X}_{0})\hspace{-1mm}=\hspace{-1mm}
  \int_{0}^{\infty}\hspace{-1mm}E(\mathbb{X}_{t}'\tilde{\mathcal{Q}}\mathbb{X}_{t})dt=\int_{0}^{\infty}\hspace{-1mm}E(\tilde{\mathcal{Y}}_{t}'\tilde{\mathcal{Y}}_{t})dt
 \hspace{-1mm}=\hspace{-1mm}0.
\end{equation}

Therefore, it has been shown that the initial state $\mathbb{X}_{0}$ is an unobservable state if and only if $\mathbb{X}_{0}$ satisfies $E(\mathbb{X}_{0}'\mathbb{P}\mathbb{X}_{0})=0$.
\end{proof}

\section*{Appendix E: Proof of Theorem \ref{thm:succeed}}

\begin{proof}
``Necessity:" Under Assumptions \ref{ass:ass2} and \ref{ass:ass4}, assume mean-field system \eqref{ps1} is mean square stabilizable, we will show the coupled ARE \eqref{are1}-\eqref{are2} admits a unique positive semi-definite solution.

Firstly, we shall show $P_t(T)$ and $P_t(T)+\bar{P}_t(T)$ given in \eqref{2-Ric1}-\eqref{2-Ric2} are both monotonically increasing with respect to $T$.

In fact, using \eqref{16} and \eqref{tina}, for any $T_{1}>T>t$, and all $x_{0}\neq 0$, we can obtain
\begin{align}\label{com1}
J_{T_{1}-t}^{*}&=E[x_{0}'P_{0}(T_{1}-t)x_{0}]+Ex_{0}'\bar{P}_{0}(T_{1}-t)Ex_{0}\notag\\
&=E[x_{0}'P_{t}(T_{1})x_{0}]+Ex_{0}'\bar{P}_{t}(T_{1})Ex_{0}\notag\\
\geq J_{T-t}^{*}&=E[x_{0}'P_{0}(T-t)x_{0}]+Ex_{0}'\bar{P}_{0}(T-t)Ex_{0}\notag\\
&=E[x_{0}'P_{t}(T)x_{0}]+Ex_{0}'\bar{P}_{t}(T)Ex_{0}.
\end{align}

Similarly, for any $0\leq t_{1}<t_{2}\leq T$, we conclude that
\begin{align}\label{com2}
J_{T-t_{1}}^{*}&=E[x_{0}'P_{0}(T-t_{1})x_{0}]+Ex_{0}'\bar{P}_{0}(T-t_{1})Ex_{0}\notag\\
&=E[x_{0}'P_{t_{1}}(T)x_{0}]+Ex_{0}'\bar{P}_{t_{1}}(T)Ex_{0}\notag\\
\geq J_{T-t_{2}}^{*}&=E[x_{0}'P_{0}(T-t_{2})x_{0}]+Ex_{0}'\bar{P}_{0}(T-t_{2})Ex_{0}\notag\\
&=E[x_{0}'P_{t_{2}}(T)x_{0}]+Ex_{0}'\bar{P}_{t_{2}}(T)Ex_{0}.
\end{align}

For any initial state variable $x_{0}\neq 0$ with $Ex_{0}=0$, \eqref{com1} and \eqref{com2} yield that
\begin{align*}
  E[x_{0}'P_{t}(T_{1})x_{0}]&\geq E[x_{0}'P_{t}(T)x_{0}],\notag\\
  E[x_{0}'P_{t_{1}}(T)x_{0}]&\geq E[x_{0}'P_{t_{2}}(T)x_{0}],
\end{align*}
which indicates that $P_{t}(T_{1})\geq P_{t}(T)$ and $P_{t_{1}}(T)\geq P_{t_{2}}(T)$.

For any initial state $x_{0}\neq 0$ with $x_{0}=Ex_{0}$, i.e., $x_{0}\in \mathcal{R}^{n}$ is arbitrary deterministic, \eqref{com1} together with \eqref{com2} implies that
\begin{align*}
  x_{0}'[P_{t}(T_{1})+\bar{P}_{t}(T_{1})]x_{0}&\geq x_{0}'[P_{t}(T)+\bar{P}_{t}(T)]x_{0},\\
  x_{0}'[P_{t_{1}}(T)+\bar{P}_{t_{1}}(T)]x_{0}&\geq x_{0}'[P_{t_{2}}(T)+\bar{P}_{t_{2}}(T)]x_{0}.
\end{align*}
Then we have $P_{t}(T_{1})+\bar{P}_{t}(T_{1})\geq P_{t}(T)+\bar{P}_{t}(T)$ and $P_{t_{1}}(T)+\bar{P}_{t_{1}}(T)\geq P_{t_{2}}(T)+\bar{P}_{t_{2}}(T)$.

Thus, $P_{t}(T)$ and $P_{t}(T)+\bar{P}_{t}(T)$ are both monotonically increasing with respect to $T$ and are monotonically decreasing with respect to $t$.

Next we will show $P_{t}(T)$ and $P_{t}(T)+\bar{P}_{t}(T)$ are uniformly bounded.

Since there exists $u_{t}\in\mathcal{U}[0,\infty)$ stabilizing system \eqref{ps1} in the mean square sense,
\begin{equation}\label{lf}
u_{t}=Lx_{t}+\bar{L}Ex_{t}
\end{equation}
with constant matrices $L$, $\bar{L}$ to be determined, the closed loop system \eqref{ps1} with controller \eqref{lf} satisfies
\begin{equation}\label{as}
\lim_{t\rightarrow+\infty}E(x_{t}'x_{t})=0.
\end{equation}
Then, by plugging linear feedback \eqref{lf} into system \eqref{ps1}, we can obtain
\begin{align}
dx_{t}&=\{(A+BL)x_{t}\hspace{-1mm}+\hspace{-1mm}[\bar{A}+B\bar{L}\hspace{-1mm}+\hspace{-1mm}\bar{B}(L+\bar{L})]Ex_{t}\}dt\label{h1}\\
&+\{(C+DL)x_{t}\hspace{-1mm}+\hspace{-1mm}[\bar{C}+D\bar{L}+\bar{D}(L+\bar{L})]Ex_{t}\}dW_{t},\notag\\
dEx_{t}&=[(A+\bar{A})+(B+\bar{B})(L+\bar{L})]Ex_{t}dt.\label{h2}
\end{align}


Since $Ex_{t}'Ex_{t}+E(x_{t}-Ex_{t})'(x_{t}-Ex_{t})=E(x_{t}'x_{t})$, then it follows from \eqref{as} that
$\lim_{t\rightarrow +\infty}Ex_{t}'Ex_{t}=0$.

Similar to the proof of \emph{Lemma 4.1 in \cite{rami2}}, from \eqref{as} we can obtain
\begin{equation*}
E\int_{0}^{\infty}x_{t}'x_{t}dt<\infty,~~\text{and}~~\int_{0}^{\infty}Ex_{t}'Ex_{t}dt<\infty.
\end{equation*}

In other words, there exists constant $c$ such that
\begin{equation}\label{as3}
  E\int_{0}^{\infty}x_{t}'x_{t}dt<cEx_{0}'x_{0}.
\end{equation}

Using Assumption \ref{ass:ass2}, we know there exists $\lambda>0$ such that
$\left[
  \begin{array}{cc}
   \hspace{-2mm} Q\hspace{-2mm} & \hspace{-2mm}0 \hspace{-2mm}\\
   \hspace{-2mm} 0    \hspace{-2mm}            & \hspace{-2mm}Q\hspace{-1mm}+\hspace{-1mm}\bar{Q}\hspace{-2mm}\\
  \end{array}
\hspace{-2mm}\right]\leq \lambda I$ and $\left[
  \begin{array}{cc}
   \hspace{-2mm} L'RL\hspace{-2mm} & \hspace{-2mm}0 \hspace{-2mm}\\
   \hspace{-2mm} 0    \hspace{-2mm}            & \hspace{-2mm}(L\hspace{-1mm}+\hspace{-1mm}\bar{L})'(R+\hspace{-1mm}\bar{R})(L\hspace{-1mm}+\hspace{-1mm}\bar{L})\hspace{-2mm}\\
  \end{array}
\hspace{-2mm}\right]\leq \lambda I$. Thus, from \eqref{lf} and \eqref{as3} we have
\begin{align}\label{in}
 J&=E\int_{0}^{\infty}[x_{t}'Qx_{t}\hspace{-1mm}+\hspace{-1mm}(Ex_{t})'\bar{Q}Ex_{t}+u_{t}'Ru_{t}+(Eu_{t})'\bar{R}Eu_{t}]dt\notag\\
&=E\int_{0}^{\infty}\Big\{x_{t}'(Q+L'RL)x_{t}+Ex_{t}'\Big[\bar{Q}+L'R\bar{L}+\bar{L}'RL\notag\\
&+\bar{L}'R\bar{L}+(L+\bar{L})'\bar{R}(L+\bar{L})\Big]Ex_{t}\Big\}dt\notag\\
&=E\int_{0}^{\infty}\Bigg\{\left[
  \begin{array}{cc}
   \hspace{-2mm} x_{t}-Ex_{t} \hspace{-2mm}\\
   \hspace{-2mm} Ex_{t}\hspace{-2mm}\\
  \end{array}
\hspace{-2mm}\right]'\left[
  \begin{array}{cc}
   \hspace{-2mm} Q\hspace{-2mm} & \hspace{-2mm}0 \hspace{-2mm}\\
   \hspace{-2mm} 0    \hspace{-2mm}            & \hspace{-2mm}Q+\bar{Q}\hspace{-2mm}\\
  \end{array}
\hspace{-2mm}\right]\left[
  \begin{array}{cc}
   \hspace{-2mm} x_{t}-Ex_{t} \hspace{-2mm}\\
   \hspace{-2mm} Ex_{t}\hspace{-2mm}\\
  \end{array}
\hspace{-2mm}\right]\Bigg\}dt\notag\\
&+E\int_{0}^{\infty}\Bigg\{\left[
  \begin{array}{cc}
   \hspace{-2mm} x_{t}-Ex_{t} \hspace{-2mm}\\
   \hspace{-2mm} Ex_{t}\hspace{-2mm}\\
  \end{array}
\hspace{-2mm}\right]'\left[
  \begin{array}{cc}
   \hspace{-2mm} L'RL\hspace{-2mm} & \hspace{-2mm}0 \hspace{-2mm}\\
   \hspace{-2mm} 0    \hspace{-2mm}            & \hspace{-2mm}(L+\bar{L})'(R\hspace{-1mm}+\hspace{-1mm}\bar{R})(L+\bar{L})\hspace{-2mm}\\
  \end{array}
\hspace{-2mm}\right] \notag\\
&\times\left[
  \begin{array}{cc}
   \hspace{-2mm} x_{t}-Ex_{t} \hspace{-2mm}\\
   \hspace{-2mm} Ex_{t}\hspace{-2mm}\\
  \end{array}
\hspace{-2mm}\right]\Bigg\}dt\notag\\
&\leq 2\lambda E\int_{0}^{\infty}E[(x_{t}-Ex_{t})'(x_{t}-Ex_{t})+Ex_{t}'Ex_{t}]dt\notag\\
&=2\lambda E\int_{0}^{\infty} x_{t}'x_{t}dt\leq 2\lambda c E(x_{0}'x_{0}).
\end{align}

Recall \eqref{op1}, then \eqref{in} implies
\begin{align}\label{yil}
 0&\leq E[x_{0}'P_{0}(T)x_{0}]+Ex_{0}'\bar{P}_{0}(T)Ex_{0}\notag\\
 &=J_{T}^{*}\leq J\leq 2\lambda cE(x_{0}'x_{0}).
\end{align}

Now we choose the initial state $x_{0}$ to be any random vector with zero mean, i.e., $Ex_{0}=0$, equation \eqref{yil} indicates that
\begin{equation*}
  0\leq E[x_{0}'P_{0}(T)x_{0}]\leq 2\lambda cE(x_{0}'x_{0}),
\end{equation*}
i.e., $0\leq P_{0}(T)\leq 2\lambda c I$.

Similarly, with $x_{0}=Ex_{0}$, i.e., the initial state $x_{0}$ is chosen to be arbitrary deterministic, then \eqref{yil} can be reduced to
\begin{equation*}
 0\leq  x_{0}'[P_{0}(T)+\bar{P}_{0}(T)]x_{0}\leq 2\lambda cx_{0}'x_{0},
\end{equation*}
hence, there holds $0\leq P_{0}(T)+\bar{P}_{0}(T)\leq 2\lambda cI$.

The boundedness of $P_{0}(T)$ and $P_{0}(T)+\bar{P}_{0}(T)$ has been proven. Recall that $P_{t}(T)$ and $P_{t}(T)+\bar{P}_{t}(T)$ are both monotonically increasing with respect to $T$ and are monotonically decreasing with respect to $t$, thus there exists constant matrices $P$ and $\bar{P}$ satisfying
\begin{align*}
&\lim_{t\rightarrow-\infty}P_{t}(T)=\lim_{t\rightarrow-\infty}P_{0}(T-t)=\lim_{T\rightarrow+\infty}P_{0}(T)=P\geq 0,\\
&\lim_{t\rightarrow-\infty}\hspace{-2mm}\bar{P}_{t}(T)\hspace{-1mm}=\hspace{-1mm}\lim_{t\rightarrow-\infty}\bar{P}_{0}(T\hspace{-1mm}-\hspace{-1mm}t)
\hspace{-1mm}=\hspace{-1mm}\lim_{T\rightarrow+\infty}\hspace{-2mm}\bar{P}_{0}(T)\hspace{-1mm}=\hspace{-1mm}\bar{P}, P+\bar{P}\geq 0.
\end{align*}

Thirdly, we will show $\dot{P}_{t}(T)\rightarrow0$ and $\dot{\bar{P}}_{t}(T)\rightarrow0$.

Actually, from \eqref{2-Ric1} and \eqref{2-Ric2} and noting that $P_{t}(T)$ and $\bar{P}_{t}(T)$ are bounded, we know that $\dot{P}_{t}(T)$ and $\dot{\bar{P}}_{t}(T)$ are uniformly bounded. Thus the uniformly continuousness of $P_{t}(T)$ and $\bar{P}_{t}(T)$ with respect to $t$ can be obtained. Furthermore, by using \emph{Lemma 8.2} in \cite{kha}, we can conclude that
\begin{equation*}
  \lim_{t\rightarrow-\infty}\dot{P}_{t}(T)=0,~~~\lim_{t\rightarrow-\infty}\dot{\bar{P}}_{t}(T)=0.
\end{equation*}

Taking limitations of $t\rightarrow -\infty$ on both sides of \eqref{4-upsi1}-\eqref{4-mt2}, we can conclude $\Upsilon_{t}^{(1)}(T)$, $\Upsilon_{t}^{(2)}(T)$, $M_{t}^{(1)}(T)$ and $M_{t}^{(2)}(T)$ are convergent, i.e.,
\begin{align}
 \lim_{t\rightarrow-\infty}\hspace{-2mm}\Upsilon_{t}^{(i)}(T)&=\Upsilon^{(i)},\lim_{t\rightarrow-\infty}\hspace{-2mm}M_{t}^{(i)}(T)=M^{(i)}, i=1,2,\label{u1}
\end{align}
where $\Upsilon^{(1)},M^{(1)},\Upsilon^{(2)},M^{(2)}$ are respectively as in \eqref{up1}-\eqref{hh2}. Moreover, taking limitation on both sides of \eqref{2-Ric1} and \eqref{2-Ric2}, $P$, $\bar{P}$ satisfies the coupled ARE \eqref{are1}-\eqref{are2}. On the other hand, the regular condition \eqref{refg} leads to
$\Upsilon^{(i)}[\Upsilon^{(i)}]^{\dag}M^{(i)}=M^{(i)}, i=1,2.$

In conclusion, we have proved that the coupled ARE \eqref{are1}-\eqref{are2} admits positive semi-definite solution.

In what follows, we will show the stabilizing controller \eqref{control} minimizes \eqref{ps200}.

 In fact, similar to \eqref{pp} and \eqref{cost1}, applying It\^{o}'s formula to $x_{t}'Px_{t}+Ex_{t}'\bar{P}Ex_{t}$, taking integral from $0$ to $T$, then taking expectation, we have that
\begin{align}\label{cost5}
&E(x_{T}'Px_{T})+Ex_{T}'\bar{P}Ex_{T}-[E(x_{0}'Px_{0})+Ex_{0}'\bar{P}Ex_{0}]\notag\\
&=-E\int_{0}^{T}[x_{t}'Qx_{t}+Ex_{t}'\bar{Q}Ex_{t}+u_{t}'Ru_{t}+Eu_{t}'\bar{R}Eu_{t}]dt\notag\\
&+E\int_{0}^{T}[u_{t}-Eu_{t}-\mathcal{K}(x_{t}-Ex_{t})]'\Upsilon^{(1)}\notag\\
&\times [u_{t}-Eu_{t}-\mathcal{K}(x_{t}-Ex_{t})]dt\notag\\
&+\hspace{-1mm}E\int_{0}^{T}\hspace{-1mm}[Eu_{t}\hspace{-1mm}-\hspace{-1mm}(\mathcal{K}\hspace{-1mm}+\hspace{-1mm}\bar{\mathcal{K}})Ex_{t}]'
\Upsilon^{(2)}[Eu_{t}\hspace{-1mm}-\hspace{-1mm}(\mathcal{K}\hspace{-1mm}+\hspace{-1mm}\bar{\mathcal{K}})Ex_{t}]dt,
\end{align}
where $\Upsilon^{(1)}$, $M^{(1)}$, $\Upsilon^{(2)}$, $M^{(2)}$ are given in \eqref{up1}-\eqref{hh2}, and $\mathcal{K}$, $\bar{\mathcal{K}}$ satisfy \eqref{k1}-\eqref{k2}.

From $\lim_{t\rightarrow +\infty}Ex_{t}'x_{t}=0$, obviously we can obtain that $\lim_{T\rightarrow+\infty}[E(x_{T}'Px_{T})+Ex_{T}'\bar{P}Ex_{T}]=0$. Thus, letting $T\rightarrow +\infty$, the cost function \eqref{ps200} can be rewritten from \eqref{cost5} as follows,
\begin{align}\label{cost21}
J&=E(x_{0}'P_{0}x_{0})+Ex_{0}'\bar{P}_{0}Ex_{0}\\
&+E\int_{0}^{\infty}[u_{t}-Eu_{t}-\mathcal{K}(x_{t}-Ex_{t})]'\Upsilon^{(1)}\notag\\
&\times [u_{t}-Eu_{t}-\mathcal{K}(x_{t}-Ex_{t})]dt\notag\\
&\hspace{-1mm}+\hspace{-1mm}E\int_{0}^{\infty}\hspace{-1mm}[Eu_{t}\hspace{-1mm}-\hspace{-1mm}(\mathcal{K}
\hspace{-1mm}+\hspace{-1mm}\bar{\mathcal{K}})Ex_{t}]'\Upsilon^{(2)}
[Eu_{t}\hspace{-1mm}-\hspace{-1mm}(\mathcal{K}\hspace{-1mm}+\hspace{-1mm}\bar{\mathcal{K}})Ex_{t}]dt.\notag
\end{align}

Therefore, the optimal controller can be given from \eqref{cost21} as \eqref{control}, and the optimal cost function \eqref{cost} can also be verified.

Finally, the uniqueness of $P$, $\bar{P}$ is proved as below. Assume that the coupled ARE \eqref{are1}-\eqref{are2} has another solution $S$, $\bar{S}$ satisfying $S\geq 0$ and $S+\bar{S}\geq 0$, i.e.,
\begin{align}
0&=Q+SA+A'S+C'SC-[T^{(1)}]'[\Delta^{(1)}]^{\dag}T^{(1)},\label{s1}\\
0&=\bar{Q}+S\bar{A}+\bar{A}'S+(A+\bar{A})'\bar{S}+\bar{S}(A+\bar{A})\notag\\
&+C'S\bar{C}+\bar{C}'SC+\bar{C}'S\bar{C}+[T^{(1)}]'[\Delta^{(1)}]^{\dag}T^{(1)}\notag\\
&-[T^{(2)}]'[\Delta^{(2)}]^{\dag}T^{(2)},\label{s2}
\end{align}
where
\begin{align*}
\Delta^{(1)}&=R+D'SD,~~T^{(1)}=B'S+D'SC,\notag\\
\Delta^{(2)}&=R+\bar{R}+(D+\bar{D})'S(D+\bar{D}),\notag\\
T^{(2)}&=(B+\bar{B})'(S+\bar{S})+(D+\bar{D})'S(C+\bar{C}),
\end{align*}
and the regular condition holds
$$\Delta^{(i)}[\Delta^{(i)}]^{\dag}T^{(i)}=T^{(i)}, i=1,2.$$

It is noted that the optimal cost function is given by \eqref{cost}, i.e.,
\begin{align}\label{un}
J^{*}&\hspace{-1mm}=\hspace{-1mm}E(x_{0}'Px_{0})\hspace{-1mm}+\hspace{-1mm}Ex_{0}'\bar{P}Ex_{0}
\hspace{-1mm}=\hspace{-1mm}E(x_{0}'Sx_{0})\hspace{-1mm}+\hspace{-1mm}Ex_{0}'\bar{S}Ex_{0}.
\end{align}

Choosing any $x_{0}\neq 0$ with $Ex_{0}=0$, from \eqref{un} we can obtain
\begin{equation*}
  E[x_{0}'(P-S)x_{0}]=0.
\end{equation*}
In other words, $P=S$ can be verified.

On the other hand, choosing arbitrary deterministic initial state, i.e., $x_{0}=Ex_{0}$, then it holds from \eqref{un} that
\begin{equation*}
  x_{0}'(P+\bar{P}-S-\bar{S})x_{0}=0.
\end{equation*}
Thus, $P+\bar{P}=S+\bar{S}$ can be obtained.

Hence, we have verified that $S=P$ and $\bar{S}=\bar{P}$, i.e., the solution to the coupled ARE \eqref{are1}-\eqref{are2} is unique.

``Sufficiency:" Under Assumptions \ref{ass:ass2} and \ref{ass:ass4}, if $P$, $\bar{P}$ is the unique positive semi-definite solution to \eqref{are1}-\eqref{are2}, i.e., $P\geq 0$ and $P+\bar{P}\geq 0$, we will show that mean-field system \eqref{ps1} with specific controller \eqref{control} is mean square stabilizable.

For stability analysis, the Lyapunov function candidate $V(t,x_{t})$ is introduced as,
\begin{align}\label{lya}
V(t,x_{t})\triangleq E(x_{t}'Px_{t})+Ex_{t}'\bar{P}Ex_{t},
\end{align}
where $P$ and $\bar{P}$ satisfy \eqref{are1}-\eqref{are2}.

From \eqref{16} and \eqref{p}, we know that the Lyapunov function candidate \eqref{lya} is defined with the \emph{optimal cost function} and the \emph{solution to the FBSDE} obtained in Theorems \ref{thm:maximum} and \ref{thm:main}.

Since $P\geq 0$ and $P+\bar{P}\geq 0$, then \eqref{lya} indicates that
\begin{align}\label{lya0}
V(t,x_{t})&=E[(x_{t}-Ex_{t})'P(x_{t}-Ex_{t})+Ex_{t}'(P+\bar{P})Ex_{t}]\notag\\
&\geq 0.
\end{align}

Following from \eqref{eqas}, we can obtain
\begin{align}\label{lya1}
&\dot{V}(t,x_{t})=-E(\mathbb{X}_{t}'\tilde{\mathcal{Q}}\mathbb{X}_{t})\leq 0.
\end{align}

\eqref{lya1} implies that $V(t,x_{t})\leq V(0,x_{0})$, i.e., $V(t,x_{t})$ is nonincreasing, from \eqref{lya0} we know $V(t,x_{t})$ is bounded below, thus $\lim_{t\rightarrow +\infty}V(t,x_{t})$ exists.


From Lemma \ref{ess}, we know that the stability of system \eqref{fb}, $(\tilde{\mathbb{A}},\tilde{\mathbb{C}})$ for simplicity, is equivalent to the stabilization of system \eqref{ps1} with controller \eqref{control}. Thus, two different cases are considered to show the stability of system \eqref{fb} in the mean square sense as below.

\emph{Case 1}: $\mathbb{P}>0$.

With $\mathbb{P}>0$, if $E(\mathbb{X}_{0}'\mathbb{P}\mathbb{X}_{0})=0$,  then using Lemma \ref{ess} we know $\mathbb{X}_{0}=0$ is the unique unobservable state of system $(\tilde{\mathbb{A}},\tilde{\mathbb{C}}, \tilde{\mathcal{Q}}^{1/2})$. Then we can conclude system $(\tilde{\mathbb{A}},\tilde{\mathbb{C}},\tilde{\mathcal{Q}}^{1/2})$ is exact observable.

Similar to the derivation of \eqref{pp}-\eqref{cost1}, for system \eqref{mf01}, $(\tilde{\mathbb{A}},\tilde{\mathbb{C}},\tilde{\mathcal{Q}}^{1/2})$ we have
\begin{align}\label{a}
  &\int_{0}^{T}E(\mathbb{X}_{t}'\tilde{\mathcal{Q}}\mathbb{X}_{t})dt \notag\\ &  =\int_{0}^{T}\{E[x(t)-Ex_t]'\mathbf{Q}[x(t)-Ex_t]+Ex_t'\bar{\mathbf{Q}}Ex_t\}dt\notag\\
&=  E[(x_0-Ex_0)'H_0(T)(x_0-Ex_0)]\notag\\
&+Ex_0'[H_0(T)+\bar{H}_0(T)]Ex_0,
\end{align}
where $H_t,\bar{H}_t$ satisfy the following differential equation:
\begin{align}
  &-\dot{H}_t(T) =\mathbf{Q}\hspace{-1mm}+\hspace{-1mm}
  \mathbf{A}'H_t(T)\hspace{-1mm}+\hspace{-1mm}H_t(T)\mathbf{A}\hspace{-1mm}+\hspace{-1mm}\mathbf{C}'H_t(T)\mathbf{C},\label{4-ly3} \\
 & -\dot{H}_t(T)+\dot{\bar{H}}_t(T) =\bar{\mathbf{Q}}+\bar{\mathbf{A}}'[H_t(T)+\bar{H}_t(T)]\notag\\
  &~~~~~~~~~~~~~~~~~~~+[H_t(T)+\bar{H}_t(T)]  \bar{\mathbf{A}}+\bar{\mathbf{C}}'H_t(T)\bar{\mathbf{C}},\label{4-ly4}
\end{align}
with final condition $H_T(T)=\bar{H}_T(T)=0. $

Since $\tilde{\mathcal{Q}}\geq 0$ in \eqref{a}, then similar to the proof of Lemma \ref{lem3}, we know that \eqref{4-ly3}-\eqref{4-ly4} admit a unique solution $H_t(T)\geq 0, H_t(T)+\bar{H}_t(T)\geq 0$ for $t\in[0,T]$.

We claim $H_0(T)>0$ and $H_0(T)+\bar{H}_0(T)>0$. If this is not the case, we know that there exists nonzero $y$ and $\bar{y}$ satisfying
\begin{align}
y&\neq 0,~E[y'H_{0}(T)y]=0,~Ey=0,\label{y1}\\
\bar{y}&\neq 0,~\bar{y}'[H_{0}(T)+\bar{H}_{0}(T)]\bar{y}=0,~\bar{y}=E\bar{y}.\label{y2}
\end{align}

Then we choose the initial state be $y$, \eqref{a} can be reduced to
\begin{align}\label{4-a}
  \int_{0}^{T}E(\mathbb{X}_{t}'\tilde{\mathcal{Q}}\mathbb{X}_{t})dt & =E[y'H_0(T)y]=0
\end{align}
which indicates that $\tilde{\mathcal{Q}}^{1/2}\mathbb{X}_{t}=0, a.s.$ for any $t\in[0,T]$, then from the exact observability of system $(\tilde{\mathbb{A}},\tilde{\mathbb{C}},\tilde{\mathcal{Q}}^{1/2})$, we can obtain the initial state $y=0$, which contradicts with $y\neq 0$ defined in \eqref{y1}.

On the other hand, if the initial state is chosen to be $\bar{y}$, from \eqref{a} we know
\begin{align}\label{4-a2}
\int_{0}^{T}E(\mathbb{X}_{t}'\tilde{\mathcal{Q}}\mathbb{X}_{t})dt & =\bar{y}'[H_0(T)+\bar{H}_0(T)]\bar{y}=0.
\end{align}

Similar to the discussion above, it follows from the exact observability of system \eqref{mf01} that $\bar{y}= 0$, this contradicts with $\bar{y}\neq 0$ in \eqref{y2}.

In conclusion, we have proved $H_0(T)>0$ and $H_0(T)+\bar{H}_0(T)>0$.

Via a time shift of $t$, combining \eqref{a}, we have
\begin{align}\label{a-5}
  &\int_{t}^{t+T}E(\mathbb{X}_{s}'\tilde{\mathcal{Q}}\mathbb{X}_{s})dt \notag\\ &\hspace{-1mm}=\hspace{-1mm}E[(x_t\hspace{-1mm}-\hspace{-1mm}Ex_t)'H_0(T)(x_t\hspace{-1mm}-\hspace{-1mm}Ex_t)]
  \hspace{-1mm}+\hspace{-1mm}Ex_t'[H_0(T)\hspace{-1mm}+\hspace{-1mm}\bar{H}_0(T)]Ex_t\notag\\
  &=V(t,x_t)-V(t+T,x_{t+T}),
\end{align}
where $H_t(T+t)=H_0(T), \bar{H}_t(T+t)=\bar{H}_0(T)$ has been used.

Taking limitation on both sides of \eqref{a-5} , using the convergence of $V(t,x_t)$, we have that
\begin{align}\label{a-6}
  \lim_{t\rightarrow+\infty}\hspace{-2mm}E[(x_t\hspace{-1mm}-\hspace{-1mm}Ex_t)'(x_t\hspace{-1mm}-\hspace{-1mm}Ex_t)]\hspace{-1mm}=\hspace{-1mm}0,
   \lim_{t\rightarrow+\infty}\hspace{-2mm}Ex_t'Ex_t\hspace{-1mm}=\hspace{-1mm}0,
\end{align}
In other words, $\lim_{t\rightarrow+\infty}E(x_t'x_t)=0$, i.e., system $(\tilde{\mathbb{A}},\tilde{\mathbb{C}})$ is stable, thus system \eqref{ps1} is mean square stabilizable with controller \eqref{control}.

\emph{Case 2}: $\mathbb{P}\geq 0$.

Firstly, combining \eqref{4-ly1} and \eqref{4-ly2} we know that $\mathbb{P}$ obeys the following Lyapunov equation:
\begin{equation}\label{ly4}
  0=\tilde{\mathcal{Q}}+\tilde{\mathbb{A}}'\mathbb{P}+\mathbb{P}\tilde{\mathbb{A}}
  +\{\tilde{\mathbb{C}}^{(1)}\}'\mathbb{P}\tilde{\mathbb{C}}^{(1)}+\{\tilde{\mathbb{C}}^{(2)}\}'\mathbb{P}\tilde{\mathbb{C}}^{(2)},
\end{equation}
where $\tilde{\mathbb{C}}^{(1)}\hspace{-1mm}=\hspace{-1mm}\left[\hspace{-2mm}
  \begin{array}{cc}
   \mathbf{C}\hspace{-2mm}&\hspace{-2mm} 0\\
   0       \hspace{-2mm} &\hspace{-2mm} 0\\
  \end{array}
\hspace{-2mm}\right]$, $\tilde{\mathbb{C}}^{(2)}\hspace{-1mm}=\hspace{-1mm}\left[\hspace{-2mm}
  \begin{array}{cc}
   0\hspace{-2mm}& \hspace{-2mm}\bar{\mathbf{C}}\\
   0        \hspace{-2mm}&\hspace{-2mm} 0\\
  \end{array}
\hspace{-2mm}\right]$ and $\tilde{\mathbb{C}}^{(1)}+\tilde{\mathbb{C}}^{(2)}=\tilde{\mathbb{C}}$.

The positive semi-definiteness of $\mathbb{P}$ indicates that there exists orthogonal matrix $U$ with $U'=U^{-1}$ such that
\begin{align}\label{upu}
U'\mathbb{P}U=\left[
  \begin{array}{cc}
   0& 0\\
   0        & \mathbb{P}_{2}     \\
  \end{array}
\right], \mathbb{P}_{2}>0.
\end{align}
From \eqref{ly4}, it can be obtained
\begin{align}\label{ly5}
  0&=U'\tilde{\mathcal{Q}}U+U'\tilde{\mathbb{A}}'U\cdot U'\mathbb{P}U+U'\mathbb{P}U\cdot U'\tilde{\mathbb{A}}U\notag\\
  &+U'\{\tilde{\mathbb{C}}^{(1)}\}'U\cdot U'\mathbb{P}U\cdot U'\tilde{\mathbb{C}}^{(1)}U\notag\\
  &+U'\{\tilde{\mathbb{C}}^{(2)}\}'U\cdot U'\mathbb{P}U\cdot U'\tilde{\mathbb{C}}^{(2)}U.
\end{align}

Without loss of generality, assume $U'\tilde{\mathbb{A}}U=\left[\hspace{-1mm}
  \begin{array}{cc}
   \tilde{\mathbb{A}}_{11}\hspace{-1mm}&\hspace{-1mm} \tilde{\mathbb{A}}_{12}\\
   \tilde{\mathbb{A}}_{21}\hspace{-1mm}&\hspace{-1mm} \tilde{\mathbb{A}}_{22}    \\
  \end{array}
\hspace{-1mm}\right]$, $U'\tilde{\mathbb{C}}^{(1)}U\hspace{-1mm}=\hspace{-1mm}\left[\hspace{-1mm}
  \begin{array}{cc}
   \tilde{\mathbb{C}}_{11}^{(1)}\hspace{-2mm}&\hspace{-2mm} \tilde{\mathbb{C}}_{12}^{(1)}\\
   \tilde{\mathbb{C}}_{21}^{(1)}\hspace{-2mm}&\hspace{-2mm} \tilde{\mathbb{C}}_{22}^{(1)}    \\
  \end{array}
\hspace{-1mm}\right]$, $U'\tilde{\mathbb{C}}^{(2)}U\hspace{-1mm}=\hspace{-1mm}\left[\hspace{-1mm}
  \begin{array}{cc}
   \tilde{\mathbb{C}}_{11}^{(2)}\hspace{-2mm}&\hspace{-2mm} \tilde{\mathbb{C}}_{12}^{(2)}\\
   \tilde{\mathbb{C}}_{21}^{(2)}\hspace{-2mm}&\hspace{-2mm} \tilde{\mathbb{C}}_{22}^{(2)}    \\
  \end{array}
\hspace{-1mm}\right]$, $U'\tilde{\mathcal{Q}}U=\left[\hspace{-1mm}
  \begin{array}{cc}
   \tilde{\mathcal{Q}}_{1}\hspace{-1mm}&\hspace{-1mm} \tilde{\mathcal{Q}}_{12}\\
   \tilde{\mathcal{Q}}_{12}'\hspace{-1mm}&\hspace{-1mm} \tilde{\mathcal{Q}}_{2}    \\
  \end{array}
\hspace{-1mm}\right]$, it holds
\begin{align*}&U'\tilde{\mathbb{A}}'U\cdot U'\mathbb{P}U+ U'\mathbb{P}U\cdot U'\tilde{\mathbb{A}}U\hspace{-1mm}=\hspace{-1mm}\left[\hspace{-2mm}
  \begin{array}{cc}
   0& \tilde{\mathbb{A}}_{21}'\mathbb{P}_{2}\\
   \mathbb{P}_{2}\tilde{\mathbb{A}}_{21}\hspace{-2mm}&\hspace{-2mm} \tilde{\mathbb{A}}_{22}'\mathbb{P}_{2}+\mathbb{P}_{2}\tilde{\mathbb{A}}_{22}    \\
  \end{array}
\hspace{-2mm}\right],\\
&U'[\tilde{\mathbb{C}}^{(1)}]'U\hspace{-1mm}\cdot\hspace{-1mm} U'\mathbb{P}U\hspace{-1mm}\cdot \hspace{-1mm} U'\tilde{\mathbb{C}}^{(1)}U\hspace{-1.2mm}=\hspace{-1.2mm}\left[\hspace{-2mm}
  \begin{array}{cc}
   \{\tilde{\mathbb{C}}_{21}^{(1)}\}'\mathbb{P}_{2}\tilde{\mathbb{C}}_{21}^{(1)}\hspace{-2mm}& \hspace{-2mm} \{\tilde{\mathbb{C}}_{21}^{(1)}\}'\mathbb{P}_{2}\tilde{\mathbb{C}}_{22}^{(1)}\\
    \{\tilde{\mathbb{C}}^{(1)}_{22}\}'\mathbb{P}_{2}\tilde{\mathbb{C}}_{21}^{(1)}\hspace{-2mm}& \hspace{-2mm} \{\tilde{\mathbb{C}}_{22}^{(1)}\}'\mathbb{P}_{2}\tilde{\mathbb{C}}_{22}^{(1)}\\
  \end{array}
\hspace{-2.5mm}\right]\\
&U'[\tilde{\mathbb{C}}^{(2)}]'U\hspace{-1mm}\cdot \hspace{-1mm}U'\mathbb{P}U\hspace{-1mm}\cdot\hspace{-1mm} U'\tilde{\mathbb{C}}^{(2)}U\hspace{-1.2mm}=\hspace{-1.2mm}\left[\hspace{-2mm}
  \begin{array}{cc}
   \{\tilde{\mathbb{C}}_{21}^{(2)}\}'\mathbb{P}_{2}\tilde{\mathbb{C}}_{21}^{(2)}\hspace{-2mm}& \hspace{-2mm} \{\tilde{\mathbb{C}}_{21}^{(2)}\}'\mathbb{P}_{2}\tilde{\mathbb{C}}_{22}^{(2)}\\
    \{\tilde{\mathbb{C}}^{(2)}_{22}\}'\mathbb{P}_{2}\tilde{\mathbb{C}}_{21}^{(2)}\hspace{-2mm}& \hspace{-2mm} \{\tilde{\mathbb{C}}_{22}^{(2)}\}'\mathbb{P}_{2}\tilde{\mathbb{C}}_{22}^{(2)}\\
  \end{array}
\hspace{-2.5mm}\right].\end{align*}
Hence, comparing each block element on both sides of \eqref{ly5} and noticing $\mathbb{P}_{2}>0$, we can obtain \begin{align}
&\tilde{\mathbb{A}}_{21}'\mathbb{P}_{2}\hspace{-1mm}+\hspace{-1mm}\tilde{\mathcal{Q}}_{12}\hspace{-1mm}=\hspace{-1mm}0, \tilde{\mathbb{C}}_{21}^{(1)}\hspace{-1mm}=\hspace{-1mm}\tilde{\mathbb{C}}_{21}^{(2)}\hspace{-1mm}=\hspace{-1mm}0, \tilde{\mathcal{Q}}_{1}=0, U'\tilde{\mathcal{Q}}U\hspace{-1mm}=\hspace{-1mm}\left[\hspace{-2mm}
  \begin{array}{cc}
   0\hspace{-2mm}& \hspace{-2mm}\tilde{\mathcal{Q}}_{12}\\
   \tilde{\mathcal{Q}}_{12}'\hspace{-2mm} & \hspace{-2mm}\tilde{\mathcal{Q}}_{2}     \\
  \end{array}
\hspace{-2mm}\right].\label{q12}
\end{align}

Now we will show $\tilde{\mathcal{Q}}_{12}=0$. Actually, for any $x=U\left[\hspace{-1mm}
  \begin{array}{cc}
   \hspace{-1mm} x_{1}\hspace{-1mm}\\
   \hspace{-1mm} x_{2}    \hspace{-1mm}           \\
  \end{array}
\hspace{-1mm}\right]\in \mathcal{R}^{2n}$ and the dimension of $x_{2}$ is the same as the dimension of $\tilde{\mathcal{Q}}_{2}$, we have that
\begin{align}\label{dim}
x'\tilde{\mathcal{Q}}x\hspace{-1.2mm}=\hspace{-1.2mm}\left[\hspace{-1mm}
  \begin{array}{cc}
   \hspace{-1mm} x_{1}\hspace{-2mm}\\
   \hspace{-1mm} x_{2}    \hspace{-2mm}           \\
  \end{array}
\hspace{-2mm}\right]'\hspace{-1mm}\left[\hspace{-1mm}
  \begin{array}{cc}
   0\hspace{-2mm}& \hspace{-2mm}\tilde{\mathcal{Q}}_{12}\\
   \tilde{\mathcal{Q}}_{12}' \hspace{-2mm}&\hspace{-2mm} \tilde{\mathcal{Q}}_{2}     \\
  \end{array}
\hspace{-2mm}\right]\hspace{-1mm}\left[\hspace{-1mm}
  \begin{array}{cc}
   \hspace{-1mm} x_{1}\hspace{-2mm}\\
   \hspace{-1mm} x_{2}    \hspace{-2mm}           \\
  \end{array}
\hspace{-2mm}\right]\hspace{-1.2mm}=\hspace{-1.2mm}x_{2}' \tilde{\mathcal{Q}}_{12}'x_{1}\hspace{-1mm}+\hspace{-1mm}x_{1}'\tilde{\mathcal{Q}}_{12}x_{2}\hspace{-1mm}+\hspace{-1mm}x_{2}'\tilde{\mathcal{Q}}_{2}x_{2}.
\end{align}

If $\tilde{\mathcal{Q}}_{12}\neq 0$, from \eqref{dim}, we can always choose $x_{1}$ and $x_{2}$ such that $x'\tilde{\mathcal{Q}}x<0$, which is a contradiction with $\tilde{\mathcal{Q}}\geq 0$. Thus, noting \eqref{q12}, we have
\begin{align}\label{q13}
&U'\tilde{\mathbb{A}}U\hspace{-1mm}=\hspace{-1mm}\left[\hspace{-1mm}
  \begin{array}{cc}
   \tilde{\mathbb{A}}_{11}\hspace{-2mm}&\hspace{-2mm} \tilde{\mathbb{A}}_{12}\\
   0\hspace{-2mm}&\hspace{-2mm} \tilde{\mathbb{A}}_{22}    \\
  \end{array}
\hspace{-1mm}\right]\hspace{-1mm},U'\tilde{\mathbb{C}}U\hspace{-1mm}=\hspace{-1mm}\left[\hspace{-1mm}
  \begin{array}{cc}
   \tilde{\mathbb{C}}_{11}\hspace{-2mm}& \hspace{-2mm}\tilde{\mathbb{C}}_{12}\\
   0\hspace{-2mm}&\hspace{-2mm} \tilde{\mathbb{C}}_{22}    \\
  \end{array}
\hspace{-1mm}\right]\hspace{-1mm},U'\tilde{\mathcal{Q}}U\hspace{-1mm}=\hspace{-1mm}\left[\hspace{-1mm}
  \begin{array}{cc}
   0\hspace{-2mm}& \hspace{-2mm}0\\
   0 \hspace{-2mm}& \hspace{-2mm}\tilde{\mathcal{Q}}_{2}     \\
  \end{array}
\hspace{-1mm}\right]\hspace{-1mm},\notag\\
&\tilde{\mathcal{Q}}_{12}\hspace{-1mm}=\hspace{-1mm}0,\tilde{\mathbb{A}}_{21}\hspace{-1mm}=\hspace{-1mm}0, \tilde{\mathcal{Q}}_{2}\geq 0,
\end{align}
where $\tilde{\mathbb{C}}_{11}=\tilde{\mathbb{C}}_{11}^{(1)}+\tilde{\mathbb{C}}_{11}^{(2)}$, $\tilde{\mathbb{C}}_{12}=\tilde{\mathbb{C}}_{12}^{(1)}+\tilde{\mathbb{C}}_{12}^{(2)}$, $\tilde{\mathbb{C}}_{22}=\tilde{\mathbb{C}}_{22}^{(1)}+\tilde{\mathbb{C}}_{22}^{(2)}$.

By plugging \eqref{upu} and \eqref{q13} into \eqref{ly5}, we have that
\begin{equation}\label{ly6}
 0=\tilde{\mathcal{Q}}_{2}+\tilde{\mathbb{A}}_{22}' \mathbb{P}_{2}+\mathbb{P}_{2}\tilde{\mathbb{A}}_{22}+ \{\tilde{\mathbb{C}}_{22}^{(1)}\}'\mathbb{P}_{2}\tilde{\mathbb{C}}_{22}^{(1)}+ \{\tilde{\mathbb{C}}_{22}^{(2)}\}'\mathbb{P}_{2}\tilde{\mathbb{C}}_{22}^{(2)}.
\end{equation}

Denote $U'\mathbb{X}_{t}=\bar{\mathbb{X}}_{t}=\left[\hspace{-1mm}
  \begin{array}{cc}
   \hspace{-1mm} \bar{\mathbb{X}}_{t}^{(1)}\hspace{-1mm}\\
   \hspace{-1mm} \bar{\mathbb{X}}_{t}^{(2)}    \hspace{-1mm}           \\
  \end{array}
\hspace{-1mm}\right]$, and the dimension of $\bar{\mathbb{X}}_{t}^{(2)} $ coincides with the rank of $\mathbb{P}_{2}$. Thus, \eqref{fb} can be rewritten as
\begin{align*}
U'd\mathbb{X}_{t}&=U'\tilde{\mathbb{A}}UU'\mathbb{X}_{t}dt+U'\tilde{\mathbb{C}}UU'\mathbb{X}_{t}dW_{t},
\end{align*}
that is
\begin{align}
d\bar{\mathbb{X}}_{t}^{(\hspace{-0.3mm}1\hspace{-0.3mm})}&\hspace{-1mm}=\hspace{-1mm}[\tilde{\mathbb{A}}_{11}\bar{\mathbb{X}}_{t}^{(\hspace{-0.3mm}1\hspace{-0.3mm})}
\hspace{-1mm}+\hspace{-1mm}\tilde{\mathbb{A}}_{12}\bar{\mathbb{X}}_{t}^{(\hspace{-0.3mm}2\hspace{-0.3mm})}]dt
\hspace{-1mm}+\hspace{-1mm}[\tilde{\mathbb{C}}_{11}\bar{\mathbb{X}}_{t}^{(\hspace{-0.3mm}1\hspace{-0.3mm})}\hspace{-1mm}+\hspace{-1mm}\tilde{\mathbb{C}}_{12}\bar{\mathbb{X}}_{t}^{(\hspace{-0.3mm}2\hspace{-0.3mm})}]dW_{t},\label{lly1}\\
d\bar{\mathbb{X}}_{t}^{(2)}&=\tilde{\mathbb{A}}_{22}\bar{\mathbb{X}}_{t}^{(2)}dt+\tilde{\mathbb{C}}_{22}\bar{\mathbb{X}}_{t}^{(2)}dW_{t}.\label{lly2}
\end{align}

Next, we will show the stability of $(\tilde{\mathbb{A}}_{22},\tilde{\mathbb{C}}_{22})$.

In fact, it follows from \eqref{eqas} and \eqref{q13} that
\begin{align}\label{lya30}
&~~\int_{0}^{T}E[(\bar{\mathbb{X}}_{t}^{(2)})'\tilde{\mathcal{Q}}_{2}\bar{\mathbb{X}}_{t}^{(2)}]dt=\int_{0}^{T}E(\mathbb{X}_{t}'\tilde{\mathcal{Q}}\mathbb{X}_{t})dt\notag\\
&=E(\mathbb{X}_{0}'\mathbb{P}\mathbb{X}_{0})-E(\mathbb{X}_{T}'\mathbb{P}\mathbb{X}_{T})\notag\\
&=E[(\bar{\mathbb{X}}_{0}^{(2)})'\mathbb{P}_{2}\bar{\mathbb{X}}_{0}^{(2)}]-E[(\bar{\mathbb{X}}_{T}^{(2)})'\mathbb{P}_{2}\bar{\mathbb{X}}_{T}^{(2)}].
\end{align}
Following the discussions as \eqref{lll}-\eqref{130}, we claim that $\bar{\mathbb{X}}_{0}^{(2)}$ is an unobservable state of $(\tilde{\mathbb{A}}_{22},\tilde{\mathbb{C}}_{22},\tilde{\mathcal{Q}}_{2}^{1/2})$ if and only if $\bar{\mathbb{X}}_{0}^{(2)}$ satisfies $E[(\bar{\mathbb{X}}_{0}^{(2)})'\mathbb{P}_{2}\bar{\mathbb{X}}_{0}^{(2)}]=0$. $\mathbb{P}_{2}>0$ implies that $(\tilde{\mathbb{A}}_{22},\tilde{\mathbb{C}}_{22},\tilde{\mathcal{Q}}_{2}^{1/2})$ is exact observable as discussed in Lemma \ref{ess} and Remark \ref{rem:5}. Following the discussions of \eqref{lya}-\eqref{a-6}, we can conclude that
\begin{equation}\label{l2l}
  \lim_{t\rightarrow +\infty}E(\bar{\mathbb{X}}_{t}^{(2)})'\bar{\mathbb{X}}_{t}^{(2)}=0,
\end{equation}
i.e., the mean square stability of $(\tilde{\mathbb{A}}_{22},\tilde{\mathbb{C}}_{22})$ has been verified.

Thirdly, to investigate the stability of $(\tilde{\mathbb{A}}_{11},\tilde{\mathbb{C}}_{11})$, we might as well choose $\bar{\mathbb{X}}_{0}^{(2)}=0$, and equation \eqref{lly2} indicates $\bar{\mathbb{X}}_{t}^{(2)}=0$, $t\geq 0$. In this case, \eqref{lly1} can be reduced to
\begin{equation}\label{zz}
  d\mathbb{Z}_{t}=\tilde{\mathbb{A}}_{11}\mathbb{Z}_{t}dt+\tilde{\mathbb{C}}_{11}\mathbb{Z}_{t}dW_{t},
\end{equation}
where $\mathbb{Z}_{t}$ is the value of $\bar{\mathbb{X}}_{t}^{(1)}$ with $\bar{\mathbb{X}}_{t}^{(2)}=0$. Hence, for $\bar{\mathbb{X}}_{0}^{(2)}=0$, it holds
\begin{equation}\label{lly3}
  E[\tilde{\mathcal{Y}}_{t}'\tilde{\mathcal{Y}}_{t}]=
  E[\mathbb{X}_{t}'\tilde{\mathcal{Q}}\mathbb{X}_{t}]=E[(\bar{\mathbb{X}}_{t}^{(2)})'\tilde{\mathcal{Q}}_{2}\bar{\mathbb{X}}_{t}^{(2)}]\equiv 0.
\end{equation}
While, the exact detectability of $(\tilde{\mathbb{A}},\tilde{\mathbb{C}},\tilde{\mathcal{Q}}^{1/2})$ implies that
\begin{align}\label{xxx}
\lim_{t\rightarrow +\infty}\hspace{-2mm}E(\bar{\mathbb{X}}_{t}'\bar{\mathbb{X}}_{t})\hspace{-1mm}=\hspace{-1mm}\lim_{t\rightarrow +\infty}\hspace{-2mm}E(\bar{\mathbb{X}}_{t}'U'U\bar{\mathbb{X}}_{t})\hspace{-1mm}=\hspace{-1mm}\lim_{t\rightarrow +\infty}\hspace{-2mm}E(\mathbb{X}_{t}'\mathbb{X}_{t})\hspace{-1mm}=\hspace{-1mm}0.
\end{align}
Therefore, in the case of $\bar{\mathbb{X}}_{0}^{(2)}=0$, for any initial $\mathbb{Z}_{0}=\bar{\mathbb{X}}_{0}^{(1)}$, from \eqref{xxx} we have that
\begin{align}\label{l1l}
 &~~ \lim_{t\rightarrow +\infty}\hspace{-2mm}E(\mathbb{Z}_{t}'\mathbb{Z}_{t})\hspace{-1mm}=\hspace{-1mm}
 \lim_{t\rightarrow +\infty}\hspace{-2mm} E[(\bar{\mathbb{X}}_{t}^{(1)})'\bar{\mathbb{X}}_{t}^{(1)}]\\
 &=\lim_{t\rightarrow +\infty}\{E[(\bar{\mathbb{X}}_{t}^{(1)})'\bar{\mathbb{X}}_{t}^{(1)}]+E[(\bar{\mathbb{X}}_{t}^{(2)})'\bar{\mathbb{X}}_{t}^{(2)}]\}\notag\\
 &=\lim_{t\rightarrow +\infty}E(\bar{\mathbb{X}}_{t}'\bar{\mathbb{X}}_{t})=0.\notag
\end{align}
which means $(\tilde{\mathbb{A}}_{11},\tilde{\mathbb{C}}_{11})$ is mean square stable.

Finally we will show that system \eqref{ps1} with controller \eqref{control} is stabilizable in the mean square sense. Actually, we denote $\tilde{\mathcal{A}}=\left[\hspace{-1mm}
  \begin{array}{cc}
   \tilde{\mathbb{A}}_{11}\hspace{-1mm}&\hspace{-1mm} 0\\
   0\hspace{-1mm}&\hspace{-1mm} \tilde{\mathbb{A}}_{22}    \\
  \end{array}
\hspace{-1mm}\right]$, $\tilde{\mathcal{C}}=\left[\hspace{-1mm}
  \begin{array}{cc}
   \tilde{\mathbb{C}}_{11}\hspace{-1mm}&\hspace{-1mm} 0\\
   0\hspace{-1mm}&\hspace{-1mm} \tilde{\mathbb{C}}_{22}    \\
  \end{array}
\hspace{-1mm}\right]$. Hence,  we can rewrite \eqref{lly1}-\eqref{lly2} as below
\begin{align}\label{133}
d\bar{\mathbb{X}}_{t}\hspace{-1mm}=\hspace{-1mm}\{\tilde{\mathcal{A}}\bar{\mathbb{X}}_{t}+\left[\hspace{-1mm}
  \begin{array}{cc}
   \tilde{\mathbb{A}}_{12}\\
   0\\
  \end{array}
\hspace{-1mm}\right]\mathbb{U}_{t}\}dt\hspace{-1mm}+\hspace{-1mm}\{\tilde{\mathcal{C}}\bar{\mathbb{X}}_{t}\hspace{-1mm}+\hspace{-1mm}\left[\hspace{-1mm}
  \begin{array}{cc}
   \tilde{\mathbb{C}}_{12}\\
   0\\
  \end{array}
\hspace{-1mm}\right]\mathbb{U}_{t}\}dW_{t},
\end{align}
where $\mathbb{U}_{t}$ is the solution to equation \eqref{lly2} with initial condition $\mathbb{U}_{0}=\mathbb{X}_{0}^{(2)}$. The stability of $(\tilde{\mathbb{A}}_{11},\tilde{\mathbb{C}}_{11})$ and $(\tilde{\mathbb{A}}_{22},\tilde{\mathbb{C}}_{22})$ as shown above indicates that $(\tilde{\mathcal{A}},\tilde{\mathcal{C}})$ is stable in the mean square sense. It is easily known from \eqref{l2l} that $\lim_{t\rightarrow +\infty}E(\mathbb{U}_{t}'\mathbb{U}_{t})=0$ and $\int_{0}^{\infty}E(\mathbb{U}_{t}'\mathbb{U}_{t})dt<+\infty$. Using \emph{Proposition 2.8} and \emph{Remark 2.9} in \cite{abb}, we can obtain that there exists constant $c_{0}$ satisfying
\begin{align}\label{xu}
\int_{0}^{\infty}E(\bar{\mathbb{X}}_{t}'\bar{\mathbb{X}}_{t})dt<c_{0}\int_{0}^{\infty}E(\mathbb{U}_{t}'\mathbb{U}_{t})dt<+\infty.
\end{align}
Hence, $\lim_{t\rightarrow +\infty}E(\bar{\mathbb{X}}_{t}'\bar{\mathbb{X}}_{t})=0$ can be verified from \eqref{xu}. Moreover, from \eqref{xxx} we have
\begin{align*}
&\lim_{t\rightarrow +\infty}\hspace{-2mm}E(x_{t}'x_{t})\hspace{-1mm}=\hspace{-1mm}\lim_{t\rightarrow +\infty}E[(x_{t}\hspace{-1mm}-\hspace{-1mm}Ex_{t})'(x_{t}\hspace{-1mm}-\hspace{-1mm}Ex_{t})\hspace{-1mm}+\hspace{-1mm}Ex_{t}'Ex_{t}]\notag\\
&=\lim_{t\rightarrow +\infty}\hspace{-2mm}E(\mathbb{X}_{t}'\mathbb{X}_{t})\hspace{-1mm}=\hspace{-1mm}\lim_{t\rightarrow +\infty}\hspace{-2mm}E(\bar{\mathbb{X}}_{t}'\bar{\mathbb{X}}_{t})\hspace{-1mm}=\hspace{-1mm}0.
\end{align*}
It is noted that system $(\tilde{\mathbb{A}},\tilde{\mathbb{C}})$ given in \eqref{fb} is just mean-field system \eqref{ps1} with controller \eqref{control}. In conclusion, mean-field system \eqref{ps1} can be stabilizable with controller \eqref{control} in the mean square sense.

Finally, for stabilizing controller \eqref{control}, there holds
\begin{align}\label{uinf}
E(u_t'u_t)\hspace{-1mm}=\hspace{-1mm}E[x_t'\mathcal{K}'\mathcal{K}x_t\hspace{-1mm}+\hspace{-1mm}Ex_t'(\bar{\mathcal{K}}'\mathcal{K}+\mathcal{K}'\bar{\mathcal{K}}\hspace{-1mm}+\hspace{-1mm}\bar{\mathcal{K}}'\bar{\mathcal{K}})Ex_t].\end{align}
From \eqref{as3} we know that $\int_{0}^{\infty}E(x_t'x_t)dt<+\infty$, therefore $\int_{0}^{\infty}E(u_t'u_t)dt<+\infty$ can be obtained from \eqref{uinf}. Thus we have proved $u_t\in \mathcal{U}[0,\infty)$. The proof is complete.
\end{proof}

\section*{Appendix F: Proof of Theorem \ref{thm:succeed2}}
\begin{proof}
``Sufficiency": Under Assumptions \ref{ass:ass2} and \ref{ass:ass3}, if the coupled ARE \eqref{are1}-\eqref{are2} has a unique positive definite solution, we will show mean-field system \eqref{ps1} is mean square stabilizable with controller \eqref{control}.

In fact, from Remark \ref{rem:5}, we know that the exact observability of system $(\tilde{\mathbb{A}},\tilde{\mathbb{C}},\tilde{\mathcal{Q}}^{1/2})$ can be implied by Assumption \ref{ass:ass3}.

In what follows, by following \eqref{a}-\eqref{a-6} in the proof of Theorem \ref{thm:succeed}, the mean square stability of system $(\tilde{\mathbb{A}},\tilde{\mathbb{C}})$ can be obtained, i.e., system \eqref{ps1} is mean square stabilizable with controller \eqref{control}. The proof is complete.

``Necessity": Under Assumptions \ref{ass:ass2} and \ref{ass:ass3}, suppose system \eqref{ps1} is mean square stabilizable, we will show the coupled ARE \eqref{are1}-\eqref{are2} admits a unique positive definite solution.

Firstly, under Assumption \ref{ass:ass2}, it is noted from \eqref{com1}-\eqref{un} that the coupled ARE \eqref{are1}-\eqref{are2} admit a unique positive semi-definite solution, i.e., $P\geq 0$ and $P+\bar{P}\geq 0$. Next we shall prove the positive definiteness of $P$ and $P+\bar{P}$.

Actually, if this is not the case, since $E(x_0'x_0)=E(\mathbb{X}_0'\mathbb{X}_0)$, then there exists $\mathbb{X}_0\neq 0$ (i.e., $x_0\neq 0$) satisfying $E(\mathbb{X}_0'\mathbb{P}\mathbb{X}_0)=0$, the symbols $\mathbb{P},\mathbb{X}_t$ are given in \eqref{mf01} and \eqref{eqas}.

From Lemma \ref{ess}, we know the mean square stabilization of system \eqref{ps1} with controller \eqref{control}, indicates system \eqref{mf01} $(\tilde{\mathbb{A}},\tilde{\mathbb{C}},\tilde{\mathcal{Q}}^{1/2})$ is mean square stable, and the solution to ARE $P,\bar{P}$ satisfy Lyapunov function \eqref{4-ly1}-\eqref{4-ly2}. Next, by following from the derivation of \eqref{eqas} and letting the initial state be $\mathbb{X}_0$ defined above, we can obtain
\begin{align}\label{5-lya03}
0\leq \int_{0}^{T}E(\mathbb{X}_{t}'\tilde{\mathcal{Q}}\mathbb{X}_{t})dt=-E(\mathbb{X}_{T}'\mathbb{P}\mathbb{X}_{T})\leq 0,
\end{align}
which indicates $\tilde{\mathcal{Q}}^{1/2}\mathbb{X}_{t}\equiv 0, ~a.s.,~\forall t\in[0,T]$.

On the other hand, noting from Remark \ref{rem:5} that the exact observability of system $(\tilde{\mathbb{A}},\tilde{\mathbb{C}},\tilde{\mathcal{Q}}^{1/2})$ can be implied by Assumption \ref{ass:ass3}. Thus, we can conclude $\mathbb{X}_0=0$. This contradicts with $\mathbb{X}_0\neq 0$. Therefore, $P>0$ and $P+\bar{P}>0$ has been shown.

Finally, the infinite horizon optimal controller \eqref{control} can be obtained by following \eqref{cost5}-\eqref{cost21}, and $u_t\in\mathcal{U}[0,\infty)$ can be verified by \eqref{uinf}.
\end{proof}



\begin{IEEEbiography}[{\includegraphics[width=1in,height=1.25in,clip,keepaspectratio]{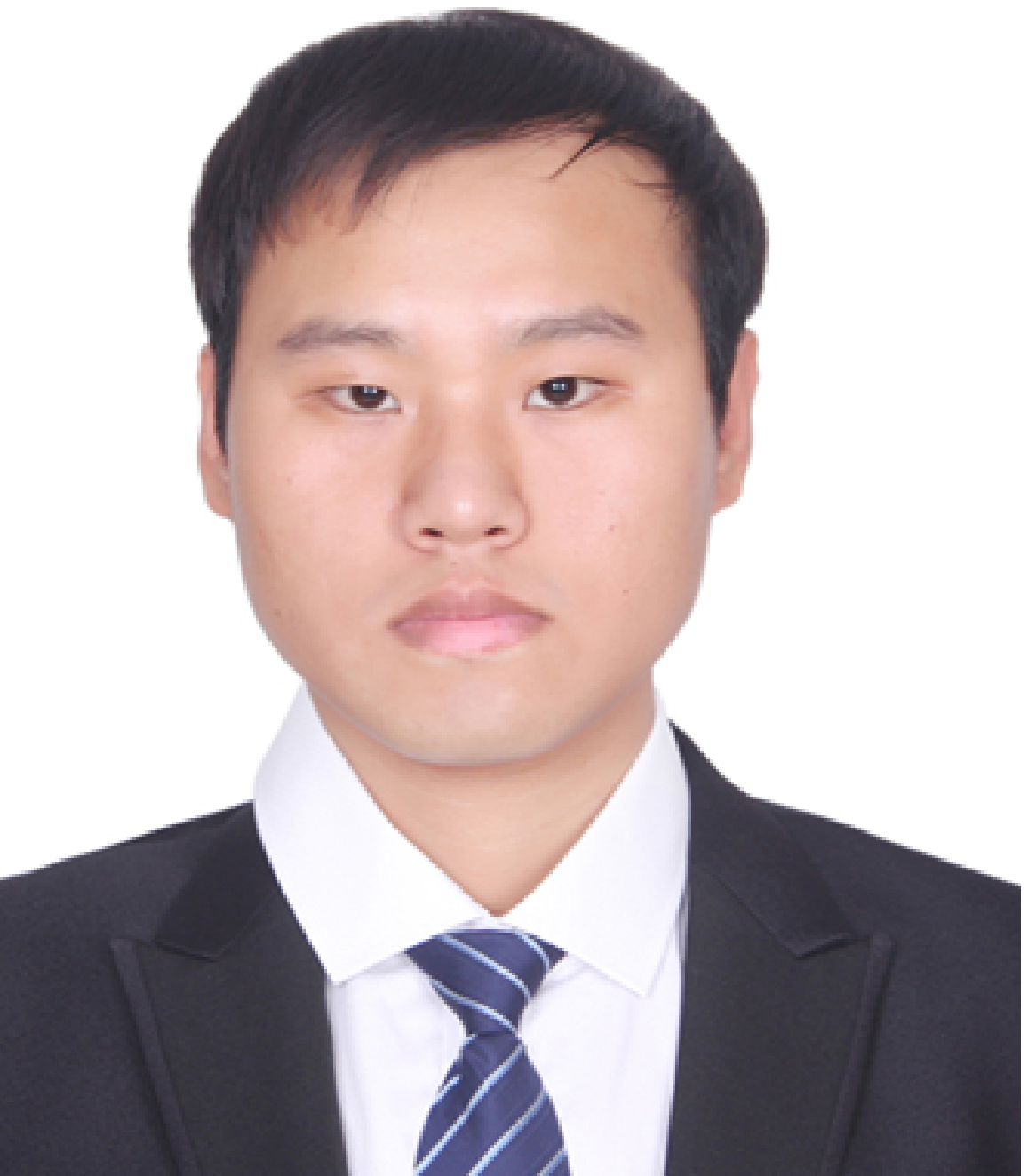}}]{Qingyuan~Qi}
received the B.S. degree in mathematics from Shandong University, Jinan, Shandong, China, in 2012. He is currently working toward the Ph.D. degree at the School of Control Science and Engineering, Shandong University, Jinan, Shandong, China.

His research interests include optimal control, optimal estimation, stabilization and stochastic systems.
\end{IEEEbiography}

\begin{IEEEbiography}[{\includegraphics[width=1in,height=1.25in,clip,keepaspectratio]{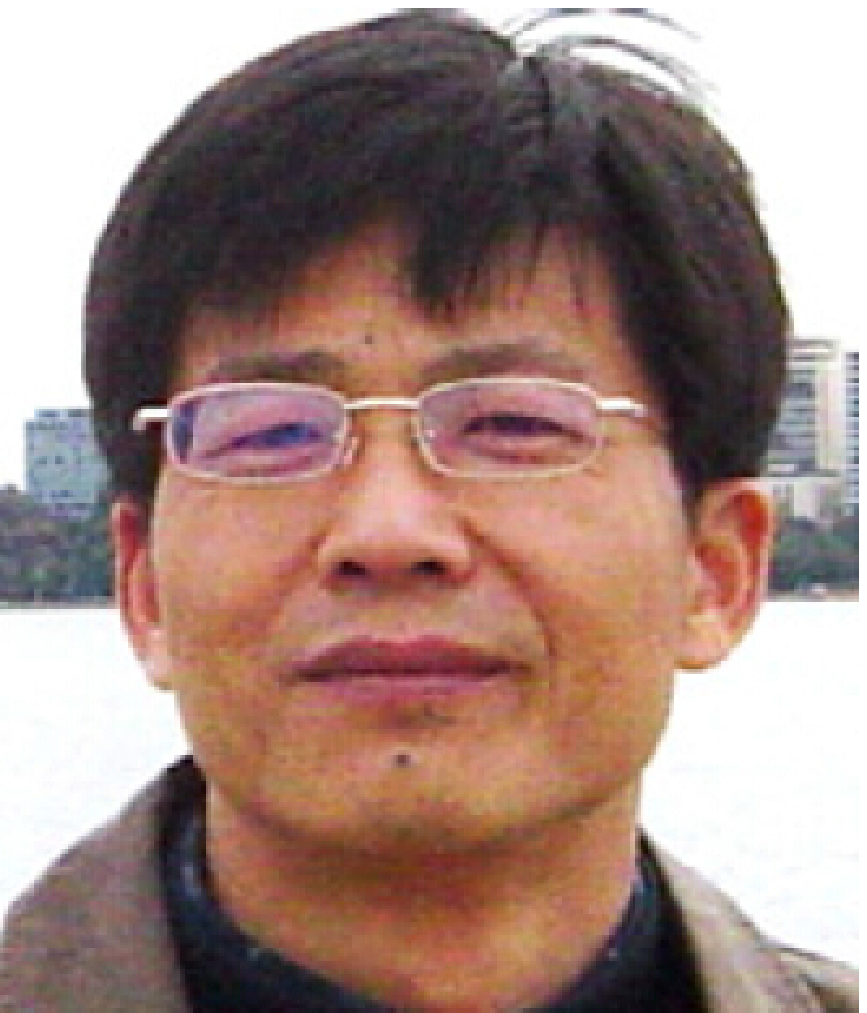}}]{Huanshui~Zhang}
(SM'06) received the B.S. degree in mathematics from Qufu Normal University, Shandong, China, in 1986, the M.Sc. degree in control theory from Heilongjiang University, Harbin, China, in 1991, and the Ph.D. degree in control theory from Northeastern University, China, in 1997.

He was a Postdoctoral Fellow at Nanyang Technological University, Singapore, from 1998 to 2001 and Research Fellow at Hong Kong Polytechnic University, Hong Kong, China, from 2001 to 2003. He is currently holds a Professorship at Shandong University, Shandong, China. He was a Professor with the Harbin Institute of Technology, Harbin, China, from 2003 to 2006. He also held visiting appointments as a Research Scientist and Fellow with Nanyang Technological University, Curtin University of Technology, and Hong Kong City University from 2003 to 2006. His interests include optimal estimation and control, time-delay systems, stochastic systems, signal processing and wireless sensor networked systems.
\end{IEEEbiography}
%


\vfill


\end{document}